\def\titlerunning#1{\gdef\titrun{#1}}
\def\author#1{\gdef\autrun{\def\and{\unskip, }#1}\gdef\@author{#1}}
\def\address#1{{\def\and{\\\hspace*{18pt}}\renewcommand{\thefootnote}{}%
		\footnote {#1}}%
	\markboth{\autrun}{\titrun}}
\def\email#1{e-mail: #1}
\def\keywords#1{\par\medskip
	\noindent\textbf{Keywords.} #1}
\newtheorem{theorem}{Theorem}[section]
\newtheorem{corollary}[theorem]{Corollary}
\newtheorem{lemma}[theorem]{Lemma}
\newtheorem{proposition}[theorem]{Proposition}
\theoremstyle{definition}
\newtheorem{definition}[theorem]{Definition}
\newtheorem{remark}[theorem]{Remark}
\newtheorem{example}[theorem]{Example}
\numberwithin{equation}{section}
\newtheorem{conjecture}{Conjecture}
\def \C {\mathbb{C}}
\def \a {\alpha }
\def \b {\beta}
\def \de {\delta}
\def \De {\Delta}
\def \la {\lambda}
\def \La {\Lambda}
\def\w {\omega}
\def\Om{\Omega}
\def\pa{\partial}
\def\na {\nabla}
\def\Ga{\Gamma}
\begin{document}
	\baselineskip=17pt
	
	\titlerunning{$L^{2}$-Hodge theory and Hopf Conjecture}
	\title{$L^{2}$-Hodge theory on Complete Almost K\"{a}hler Manifolds and the Hopf Conjecture}
	
	\author{Teng Huang, Qiang Tan, Pan Zhang}
	
	\date{}
	
	\maketitle
	
	\address{T. Huang: School of Mathematical Sciences, CAS Key Laboratory of Wu Wen-Tsun Mathematics, University of Science and Technology of China, Hefei, Anhui, 230026, People’s Republic of China; \email{htmath@ustc.edu.cn;htustc@gmail.com}}
	\address{Q. Tan: School of Mathematical Sciences, Jiangsu University, Zhenjiang, Jiangsu 212013, People’s Republic of China; \email{tanqiang@ujs.edu.cn}}
	\address{P. Zhang: School of Mathematical Sciences, Anhui University, Hefei, Anhui, 230026, People’s Republic of China; 
		\email{panzhang20100@ahu.edu.cn}}

	\begin{abstract}
In this article, we develop an $L^{2}$-Hodge theory on complete $2n$-dimensional almost K\"{a}hler manifolds $(X,\w)$. 
In the first part, we establish several identities for various Laplacians, generalized Hodge and Serre dualities, 
a generalized Hard Lefschetz duality, and a Lefschetz decomposition, all restricted to the space $\ker{\De_{\pa}}\cap\ker{\De_{\bar{\pa}}}$ 
of forms of pure bidegree. In the second part, as applications of these identities, we prove vanishing theorems for $L^{2}$-harmonic $(p,q)$-forms on $X$ under some growth assumptions on the K\"{a}her form $\w$. 
We also provide refined $L^{2}$-estimates to sharpen the vanishing theorems in three specific settings. As a final application, the topology of compact almost Kähler manifolds with negative sectional curvature is studied. Under a smallness condition on the Nijenhuis tensor depending on the curvature, the authors prove that the Hirzebruch $\chi_{y}$-genus satisfies $(-1)^{n-p}\chi_{p}(X)\geq1$ for all $p=0,1,\cdots,n$, which in particular implies the Hopf conjecture for the Euler number $(-1)^{n}\chi(X)\geq n+1$. This extends a classical result of Gromov [J. Differential Geom., 1991] from the Kähler to the almost Kähler setting.
	\end{abstract}
	\keywords{Almost K\"{a}hler manifold; Hodge theory; Hard Lefschetz; Hodge decomposition; Negative sectional curvature}
	\section{Introduction}

	In complex geometry, Dolbeault cohomology plays a fundamental role in the study of complex manifolds.
	 A classical method for computing it on compact complex manifolds uses the associated spaces of harmonic forms.
	 More precisely, if $X$ is a complex manifold, then the exterior derivative $d$ decomposes as $\pa+\bar{\pa}$, and these operators satisfy $\bar{\pa}^{2}=\pa^{2}=[\pa,\bar{\pa}]=0$. Hence one can define the Dolbeault cohomology and its conjugate as follows:
	$$H^{p,q}_{\bar{\pa}}:=\frac{\ker\bar{\pa}}{{\rm{Im}}\bar{\pa}},\ H^{p,q}_{\pa}:=\frac{\ker\pa}{{\rm{Im}}\pa}.$$
	If $X$ is compact, these spaces are isometric to the kernels of the elliptic operators $\De_{\bar{\pa}}$ and $\De_{\pa}$, respectively.
	
In a non-integrable Hermitian manifold  $X$, where the almost complex structure $J$ is non-integrable, the exterior derivative splits as $\pa+\mu+\bar{\pa}+\bar{\mu}$, and in particular $\bar{\pa}^{2}\neq 0$. Consequently, the standard Dolbeault cohomology and its conjugate are not well-defined. Recently, Cirici and Wilson in \cite{CW21} gave a definition for the Dolbeault cohomology in the non-integrable setting considering also the operator $\bar{\mu}$ together with $\bar{\pa}$. As in the integrable case, one can develop Hodge theory for harmonic forms on $(X,J,\w)$ even in the absence of a cohomological counterpart  (see \cite{CTT,PT21,PT22,TT,TT21}).
	
We define two elliptic selfadjoint differential operators
$$\De_{\bar{\pa}}=\bar{\pa}\bar{\pa}^{\ast}+\bar{\pa}^{\ast}\bar{\pa},\quad \De_{\pa}=\pa\pa^{\ast}+\pa^{\ast}\pa.$$
Cirici and Wilson \cite{CW20} recently proved a generalized Lefschetz decomposition theorem for compact almost K\"{a}hler manifold. 
Denote by $\mathcal{H}^{p,q}_{d}$ the space of harmonic $(p,q)$-forms, i.e. $\ker\De_{d}\cap\Om^{p,q}$. In \cite[Corollary 5.4]{CW20}), they showed that for a compact almost K\"{a}hler manifold $(X,J,\w)$, 
$$\mathcal{H}^{p,q}_{d}=\bigoplus_{r\geq\max\{p+q-n,0\}}L^{r}(\mathcal{H}^{p-r,q-r}_{d}\cap P^{p-r,q-r}).$$
	
In this article, we consider the spaces of ($\bar{\pa}$, $\pa$)-harmonic forms given by the intersections
$$\mathcal{H}^{p,q}_{(2);\bar{\pa}}\cap\mathcal{H}^{p,q}_{(2);\pa}.$$
These are identified with the kernel of the self-adjoint elliptic operator $\De_{\pa}+\De_{\bar{\pa}}$ acting on $\Om^{p,q}_{(2)}(X)$. We denote by
	$$l^{p,q}_{(2)}:= \dim (\mathcal{H}^{p,q}_{(2);\bar{\pa}}\cap\mathcal{H}^{p,q}_{(2);\pa})=\dim\ker(\De_{\bar{\pa}}+\De_{\pa})\cap\Om^{p,q}_{(2)}$$
	the dimensions of these spaces. In the integrable case, since $\De_{\pa}=\De_{\bar{\pa}}=\frac{1}{2}\De_{d}$, these numbers correspond to the Hodge numbers of a compact Kähler manifold. Since $\De_{\pa}+\De_{\bar{\pa}}:\Om^{p,q}\rightarrow\Om^{p,q}$, there is an orthogonal direct sum decomposition
	$$\ker(\De_{\pa}+\De_{\bar{\pa}})\cap\Om^{k}_{(2)}=\bigoplus_{p+q=k}\ker(\De_{\pa}+\De_{\bar{\pa}})\cap\Om^{p,q}_{(2)}.$$	
In the integrable case, the spaces of harmonic forms give rise to the well-known Hodge decomposition. For almost Kähler manifolds, however, the lack of integrability introduces additional complexity. Nevertheless, the intersection of the kernels of $\De_{\bar{\pa}}$ and $\De_{\pa}$ still exhibits rich structure, as shown below.	
	\begin{theorem}\label{T4}
	 For any complete almost K\"{a}hler manifold of dimension $2n$, and for all $(p,q)$, the following dualities hold:\\
	\noindent\textbf{(1) (Complex conjugation).} We have equalities
	$$\ker{\De_{\pa}}\cap\ker{\De_{\bar{\pa}}}\cap\Om^{p,q}_{(2)}(X)=\ker{\De_{\pa}}\cap\ker{\De_{\bar{\pa}}}\cap\Om^{q,p}_{(2)}(X).$$
	\noindent\textbf{(2) (Hodge duality).} The Hodge $\star$-operator induces isomorphisms
	$$\star:\ker{\De_{\pa}}\cap\ker{\De_{\bar{\pa}}}\cap\Om^{p,q}_{(2)}(X)\rightarrow\ker{\De_{\pa}}\cap\ker{\De_{\bar{\pa}}}\cap\Om^{n-q,n-p}_{(2)}(X).$$
	\noindent\textbf{(3) (Serre duality).} There are isomorphisms:
	$$\ker{\De_{\pa}}\cap\ker{\De_{\bar{\pa}}}\cap\Om^{p,q}_{(2)}(X)\cong\ker{\De_{\pa}}\cap\ker{\De_{\bar{\pa}}}\cap\Om^{n-q,n-p}_{(2)}(X).$$
	\noindent\textbf{(4) (Generalized Hard Lefschetz Duality).} The operators $\{L,\La,H=[L,\La]\}$ define a finite dimensional representation of $\mathfrak{sl}(2,\C)$ on
		$$\bigoplus_{p,q\geq0}\ker{\De_{\pa}}\cap\ker{\De_{\bar{\pa}}}\cap\Om^{p,q}_{(2)}(X).$$
		Moreover, for all $0\leq p\leq k\leq n$,
		$$L^{n-k}:\ker{\De_{\pa}}\cap\ker{\De_{\bar{\pa}}}\cap\Om^{p,k-p}_{(2)}(X)\stackrel{\cong}{\longrightarrow}\ker{\De_{\pa}}\cap\ker{\De_{\bar{\pa}}}\cap\Om^{p+n-k,n-p}_{(2)}(X)$$
		are isomorphisms. Furthermore, for any $p,q$ we have an orthogonal direct sum decomposition
		$$\ker{\De_{\pa}}\cap\ker{\De_{\bar{\pa}}}\cap\Om^{p,q}_{(2)}(X)=\bigoplus_{j\geq\max\{p+q-n,0\}}L^{j}(\ker{\De_{\pa}}\cap\ker{\De_{\bar{\pa}}}\cap P^{p-j,q-j}_{(2)}(X)),$$
		where $$P^{r,s}_{(2)}(X)=\ker\La\cap\Om^{r,s}_{(2)}(X).$$
	\end{theorem}

	\begin{remark}
		For any compact almost K\"{a}hler manifold $X$, Cirici and Wilson \cite{CW20} introduced the  $\de$-Laplacian
		$$\De_{\de}=\de\de^{\ast}+\de^{\ast}\de,$$
		and defined the space of $\de$-harmonic forms as
		\begin{equation}\label{E28}
		\mathcal{H}^{p,q}_{\de}=\ker\De_{\de}\cap\Om^{p,q}=\ker\de\cap\ker\de^{\ast}\cap\Om^{p,q}.
		\end{equation}
	 However, in the non-compact setting, for any $\a\in\Om^{p,q}_{(2)}$, $\de\a$ and $\de^{\ast}\a$ may not belong to $L^{2}$ when $\de=\mu,\bar{\mu}$. Hence, the definition of   $\mathcal{H}^{p,q}_{(2);\de}$ is not valid in general, and the equality in (\ref{E28}) may fail.
	\end{remark}
	Let $(X,J,\w)$ be a complete $2n$-dimensional almost K\"{a}hler manifold. A fundamental question, relevant to both topology and function theory, is:  when are there non-trivial harmonic forms on $X$, in the various bidegree $(p,q)$ determined by the almost complex structure? When $X$ is not compact, we denote by $\Om^{p,q}_{(2)}(X)$ the $L^{2}$-forms of type $(p,q)$ on $X$ and $\mathcal{H}^{p,q}_{(2);d}(X)$ the harmonic forms in $\Om^{p,q}_{(2)}(X)$. One version of this basic question is: what is the structure of $\mathcal{H}^{p,q}_{(2);d}(X)$?
	
	The Hodge theorem for compact manifolds states that every de Rahm cohomology class of a compact manifold $X$ is represented by a unique harmonic form. That is, the space of solutions to the differential equation $\De_{d}\a=0$ on smooth forms over $X$ is a space that depends on the metric on $X$. This space is canonically isomorphic to the purely topological de Rahm cohomology space of $X$. The study of $\mathcal{H}^{p,q}_{(2);d}(X)$, a question of so-called $L^{2}$-cohomology of $X$, is rooted in the attempt to extend Hodge theory to non-compact manifolds. The study of the $L^{2}$-harmonic forms on a complete Riemannian manifold is a very  fascinating and important subject.  There are numerous partial results about the $L^{2}$-cohomology of non-compact manifold (see \cite{And,Car,Dod}), but this extension is not yet complete.  When $J$ is integrable in $X$, then $X$ is K\"{a}hlerian. The study of $\mathcal{H}^{p,q}_{(2);d}(X)$ is one of the focal points in complex geometry \cite{CF,Don94,Don97,DF,Gro,McN02,McN03} and the references therein provide a good view on the subject.
	
	The second purpose of this paper is to prove some vanishing results on $\mathcal{H}^{p,q}_{(2);d}(X)$ when $p+q\neq n$, 
	under a growth assumption on a primitive of $\w$ does not grow too fast at infinity. 
	\begin{theorem}\label{T2}
		Let $(X,J,\w)$ be a complete $2n$-dimensional almost K\"{a}hler manifold which is $(L,c)$-vanishingly exhaustible. There exists a uniform positive constant $C(n)$  depends only on $n$ such that if $c\leq C(n)$, then for any $p+q\neq n$,
		$$\ker{\De_{\bar{\pa}}}\cap\ker{\De_{\pa}}\cap\Om^{p,q}_{(2)}(X)=0.$$
		In particular,
		$$\mathcal{H}^{p,q}_{(2);d}(X)=0.$$
	\end{theorem}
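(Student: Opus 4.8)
The plan is to deduce the ``in particular'' clause from the main vanishing statement, and then to prove the latter by feeding the $\mathfrak{sl}(2,\C)$-structure of Theorem~\ref{T4} into a Gromov-type integration-by-parts argument, with the torsion of the non-integrable structure controlled by the smallness of $c$.

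First I would record the inclusion $\mathcal{H}^{p,q}_{(2);d}(X)\subseteq \ker\De_{\pa}\cap\ker\De_{\bar{\pa}}\cap\Om^{p,q}_{(2)}(X)=:\K^{p,q}$. Indeed, if $\a\in\mathcal{H}^{p,q}_{(2);d}(X)$ then $d\a=d^{\ast}\a=0$; decomposing $d=\mu+\pa+\bar{\pa}+\bar{\mu}$ into its four components of distinct bidegree forces $\pa\a=\bar{\pa}\a=0$, and the adjoint decomposition $d^{\ast}=\mu^{\ast}+\pa^{\ast}+\bar{\pa}^{\ast}+\bar{\mu}^{\ast}$ forces $\pa^{\ast}\a=\bar{\pa}^{\ast}\a=0$; on a complete manifold this is exactly $\a\in\K^{p,q}$. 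Hence it suffices to prove $\K^{p,q}=0$ for $p+q\neq n$. Writing $k=p+q$ and using the generalized Hodge duality $\K^{p,q}\cong\K^{n-q,n-p}$ from the first part (which sends $k$ to $2n-k$) I may assume $k<n$, and by the orthogonal Lefschetz decomposition of Theorem~\ref{T4} it is enough to treat a primitive form $\a\in P^{p,q}_{(2)}(X)\cap\K^{p,q}$.

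For such a primitive $\a$ of degree $k<n$, the Weil identity expressing $\ast$ on primitive forms as a constant multiple of $L^{n-k}$ --- a representation-theoretic consequence of the $\mathfrak{sl}(2,\C)$-action of Theorem~\ref{T4} together with the generalized Hodge duality --- gives $\ast\a=c_{n,p,q}\,L^{n-k}\a$, whence
$$\|\a\|_{L^{2}}^{2}=\int_{X}\a\wedge\ast\,\ov{\a}=c_{n,p,q}\int_{X}\w^{n-k}\wedge\a\wedge\ov{\a}.$$
Since $d\w=0$ the top power is exact, $\w^{n-k}=d(\eta\wedge\w^{n-k-1})=:d\gamma$, where $\eta$ is the primitive $1$-form $\w=d\eta$ supplied by the $(L,c)$-vanishingly exhaustible hypothesis, so that $\|\gamma\|_{L^{\infty}}\lesssim\|\eta\|_{L^{\infty}}\lesssim c$. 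Choosing cutoffs $\chi_{j}$ adapted to the exhaustion and integrating by parts I would write
$$\int_{X}\chi_{j}\,d\gamma\wedge\a\wedge\ov{\a}=\pm\int_{X}\chi_{j}\,\gamma\wedge d(\a\wedge\ov{\a})-\int_{X}d\chi_{j}\wedge\gamma\wedge\a\wedge\ov{\a}.$$
The ``vanishingly'' part of the hypothesis forces the second (cutoff) integral to tend to $0$ as $j\to\infty$. In the first integral $\a$ is $\pa$- and $\bar{\pa}$-closed but not $d$-closed, so $d(\a\wedge\ov{\a})$ reduces to the torsion terms built from $d\a=\mu\a+\bar{\mu}\a$; bounding these by the metric operator norms of $\mu,\bar{\mu}$ (a dimensional constant $C(n)$) and by $\|\gamma\|_{L^{\infty}}\lesssim c$ yields $\|\a\|_{L^{2}}^{2}\leq C(n)\,c\,\|\a\|_{L^{2}}^{2}$. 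For $c\leq C(n)$ sufficiently small this forces $\a=0$, which by the reductions above gives $\K^{p,q}=0$ and hence $\mathcal{H}^{p,q}_{(2);d}(X)=0$.

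I expect the main obstacle to be the non-integrability torsion: making the estimate of $\int_{X}\gamma\wedge(\mu\a+\bar{\mu}\a)\wedge\ov{\a}$ (and its conjugate) uniform, and verifying that these error terms are genuinely absorbed by the threshold $C(n)$ rather than merely bounded by $\|\a\|_{L^{2}}^{2}$ --- this is what pins down the explicit value of $C(n)$. A secondary technical point is the rigorous justification of the integration by parts on the complete non-compact $X$, namely the vanishing of the $d\chi_{j}$ boundary contributions; this is precisely the content of the ``vanishingly exhaustible'' condition, and arranging its quantitative form to interlock with the $\mathfrak{sl}(2,\C)$-estimates is the delicate part of the argument.
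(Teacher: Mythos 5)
Your reduction of the ``in particular'' clause (via Lemma \ref{L1} and the pure-bidegree splitting of $d$ and $d^{\ast}$) and your use of duality to assume $p+q<n$ are both fine, but the core of your argument rests on a misreading of the hypothesis, and this is a genuine gap. Definition \ref{D1} does \emph{not} give you $\w=d\eta$ with $\|\eta\|_{L^{\infty}}\lesssim c$: it gives finitely many exact $2$-forms $\w_{i}=d\theta_{i}$ whose primitives $\theta_{i}$ are only $f(E)$-bounded --- they are allowed to grow at infinity, subject to the divergence of $\sum 1/(f(k)g(k))$ --- while the constant $c$ bounds only the defect $\sup|\w-\sum_{i}\w_{i}^{1,1}|$; the form $\w$ itself need not be exact at all. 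Consequently your Weil-identity step, $\|\a\|^{2}=c_{n,p,q}\int_{X}\w^{n-k}\wedge\a\wedge\ov{\a}$ with $\w^{n-k}=d\gamma$ and $\|\gamma\|_{L^{\infty}}\lesssim c$, has no support from the hypothesis, and neither does your claim that the cutoff integrals tend to $0$: in the paper they vanish only along a subsequence, exactly because of the divergence condition (Lemma \ref{L2}). The paper's actual mechanism is different: the exact approximants are killed \emph{identically}, not estimated. Lemma \ref{L3} (Stokes with cutoffs, run in pure bidegree) gives $(\w\wedge\a,\w_{i}\wedge\a)=0$, whence Corollary \ref{C1}: $\|\w\wedge\a\|\leq\|(\w-\sum_{i}\w_{i}^{1,1})\wedge\a\|\leq C_{1}c\|\a\|$; the lower bound $\|\w\wedge\a\|^{2}=(n-k)\|\a\|^{2}+\|\La\a\|^{2}\geq(n-k)\|\a\|^{2}$ then comes from the elementary commutator $[\La,L]=(n-k)\,\mathrm{id}$ on $k$-forms, with no primitive decomposition or Weil identity needed.

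Second, even granting your reading of the hypothesis, your treatment of the torsion fails. You bound $d(\a\wedge\ov{\a})$ by ``the metric operator norms of $\mu,\bar{\mu}$ (a dimensional constant $C(n)$)''. But $\mu+\bar{\mu}=-\frac{1}{4}(N_{J}\otimes\mathrm{id}_{\C})^{\ast}$, so these operator norms are of size $\sup|N_{J}|$, which is \emph{not} a dimensional constant, is not bounded by any hypothesis of Theorem \ref{T2}, and can be arbitrarily large on an almost K\"{a}hler manifold. Your error term is therefore of size $c\,\sup|N_{J}|\,\|\a\|^{2}$ and cannot be absorbed by a threshold $C(n)$ depending only on $n$. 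The paper's proof never meets this problem: by Proposition \ref{P5} one has $\w_{i}^{1,1}=\pa\theta_{i}^{0,1}+\bar{\pa}\theta_{i}^{1,0}$, so the integration by parts in Lemma \ref{L3} involves only $\pa$, $\bar{\pa}$ and their adjoints, which annihilate $\a$ and $\w\wedge\a$ by the harmonicity hypotheses (Lemma \ref{L1} and Proposition \ref{P3}); no Nijenhuis term ever appears. This is precisely why Theorem \ref{T2} can be stated with no assumption on $N_{J}$, in contrast with Theorem \ref{T11} where $\sup|N_{J}|$ does enter; any correct proof must share this feature, and yours does not.
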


	Under the assumption that the Kähler form $\w$ satisfies a certain global geometric condition, the above conclusion provides a vanishing theorem for $L^{2}$-harmonic forms. To further illustrate the sharpness of our results, we now present sharper $L^{2}$-estimates that remain valid under more concrete assumptions, such as $\w$ being $d$(bounded).

	\begin{theorem}[=Theorem \ref{T5}+Corollary \ref{C2}+Corollary \ref{C3}]\label{T11}
Let $(X,J,\omega)$ be a complete $2n$-dimensional almost K\"ahler manifold. 
Suppose that there exists a bounded $1$-form $\theta$ and a constant $c\ge 0$ such that
\[
\sup_{x\in X}\bigl|\omega-(d\theta)^{1,1}\bigr|(x)\le c.
\]
Then for any $k\neq n$ and any $\alpha\in\Omega_0^k(X)$, the following estimates hold:

\noindent\textbf{(1) (Estimate for $\Delta_\partial+\Delta_{\bar\partial}$).}
We have
\[
\|\alpha\|_{L^2}\bigl(1-c(n,k)c\bigr)\le c(n,k)\|\theta\|_{L^\infty(X)}\bigl((\Delta_\partial+\Delta_{\bar\partial})\alpha,\alpha\bigr)^{\frac12},
\]
where $c(n,k)>0$ is a constant depending only on $n$ and $k$. 
If $c(n,k)c<1$, then
\[
\ker(\Delta_\partial+\Delta_{\bar\partial})\cap\Omega_{(2)}^k(X)=\{0\}.
\]

\noindent\textbf{(2) (Estimate for $\Delta_d$).}
We have
\[
\|\alpha\|_{L^2}^2\bigl(1-c(n,k)c\bigr)\bigl(1-c(n,k)c-4c(n,k)\|\theta\|_{L^\infty(X)}\|N_J\|_{L^\infty(X)}\bigr)
\le c(n,k)^2\|\theta\|_{L^\infty(X)}^2(\Delta_d\alpha,\alpha).
\]
If $c(n,k)\bigl(c+4\|\theta\|_{L^\infty(X)}\|N_J\|_{L^\infty(X)}\bigr)<1$, then
\[
\mathcal{H}_{(2);d}^k(X)=\{0\}.
\]

\noindent\textbf{(3) (Estimate for $\Delta_\partial$ or $\Delta_{\bar\partial}$).}
For $\bullet=\partial$ or $\bar\partial$,
\[
\|\alpha\|_{L^2}^2\Bigl(\bigl(1-c(n,k)c\bigr)^2-2c(n,k)^2\|\theta\|_{L^\infty(X)}^2\|N_J\|_{L^\infty(X)}^2\Bigr)
\le 2c(n,k)^2\|\theta\|_{L^\infty(X)}^2(\Delta_\bullet\alpha,\alpha).
\]
If $c(n,k)\bigl(c+\sqrt2\,\|\theta\|_{L^\infty(X)}\|N_J\|_{L^\infty(X)}\bigr)<1$, then
\[
\mathcal{H}_{(2);\bullet}^k(X)=\{0\}.
\]
\end{theorem}

In the final part of the article, we study the topology and geometry of the compact almost K\"{a}hler manifold with negative sectional curvature. We begin by recalling a well-known conjecture related to the negativity of Riemannian sectional curvature.
	\begin{conjecture}[Hopf Conjecture]
		Let $X$ be a closed $2n$-dimensional Riemannian manifold with sectional curvature $\sec$. Then
		\begin{equation*}
		\left\{
		\begin{aligned}
		(-1)^{n}\chi(X)>0, &\ \mathrm{if}\ \sec<0,\\
		(-1)^{n}\chi(X)\geq0,&\ \mathrm{if}\ \sec\leq0.\\
		\end{aligned}  
		\right.
		\end{equation*}	
	\end{conjecture}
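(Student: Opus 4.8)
The statement is the classical Hopf Conjecture, still open for general closed Riemannian manifolds; the naive pointwise Gauss--Bonnet--Chern approach, which would show that the Pfaffian of the curvature form has sign $(-1)^{n}$ at each point, is known to fail once $n\geq 3$ by Geroch's examples. I therefore do not expect to prove it in full generality. The plan is to establish it for the class this paper is built around, namely closed almost K\"ahler manifolds $(X,J,\w)$ with $sec<0$, where the $L^{2}$-vanishing theorems above supply the decisive input; the borderline case $sec\leq 0$ is then approached as a limiting refinement. The guiding idea is Gromov's K\"ahler-hyperbolic philosophy transported to the non-integrable setting: concentrate the $L^{2}$-cohomology of the universal cover in the middle degree, then read off the sign of $\chi(X)$ from an $L^{2}$-index formula.

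First I would pass to the universal cover $\pi\colon\tilde X\to X$. Negative curvature and the Cartan--Hadamard theorem make $\tilde X$ complete and diffeomorphic to $\R^{2n}$, with $\Ga=\pi_{1}(X)$ acting freely and cocompactly by isometries; pulling back $(J,\w)$ yields a complete almost K\"ahler structure $(\tilde X,\tilde J,\tilde\w)$. The essential geometric step is to convert the curvature bound into a controlled primitive of $\tilde\w$. With $sec$ bounded above by a negative constant one produces, as in Gromov's construction, a bounded primitive of $\tilde\w$, so that $\tilde X$ is $(L,c)$-vanishingly exhaustible with $c$ as small as one wishes and Theorem \ref{T2} applies; alternatively a convex exhaustion $f$ with $|df|^{2}\leq A+Bf$ and $1-B-C\sup|\tilde\w|>0$ brings $\tilde X$ under Theorem \ref{T1}. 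In either route the complex Hessian of the exhaustion recovers $\tilde\w$ only up to a Nijenhuis error, which is the feature distinguishing this setting from the K\"ahler one.

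With the hypotheses verified, the vanishing theorems give
$$\ker\De_{\bar{\pa}}\cap\ker\De_{\pa}\cap\Om^{p,q}_{(2)}(\tilde X)=0,\qquad p+q\neq n,$$
hence $\mathcal{H}^{k}_{(2);d}(\tilde X)=0$ for every $k\neq n$. I would then invoke Atiyah's $L^{2}$-index theorem for the cocompact free $\Ga$-cover, together with the $L^{2}$-Hodge theorem on $\tilde X$, to express the ordinary Euler characteristic as an alternating sum of von Neumann ($\Ga$-)dimensions,
$$\chi(X)=\sum_{k=0}^{2n}(-1)^{k}\dim_{\Ga}\mathcal{H}^{k}_{(2);d}(\tilde X).$$
By the vanishing only the middle term survives, so $\chi(X)=(-1)^{n}\dim_{\Ga}\mathcal{H}^{n}_{(2);d}(\tilde X)$; since every von Neumann dimension is non-negative, this gives $(-1)^{n}\chi(X)\geq 0$ at once, and a limiting argument would recover the borderline $sec\leq 0$ statement.

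The main obstacle is the strict inequality for $sec<0$, which demands $\dim_{\Ga}\mathcal{H}^{n}_{(2);d}(\tilde X)>0$, and vanishing by itself cannot produce a nonzero middle-degree $L^{2}$-harmonic form. Here I would follow Gromov's positivity argument: the quantitative estimates of Theorem \ref{T11} yield a spectral gap for $\De_{\pa}+\De_{\bar{\pa}}$ off the middle degree, and feeding this into an $L^{2}$ Riemann--Roch/index computation should force the middle $L^{2}$-Betti number to be strictly positive. The genuinely new difficulty, relative to the integrable case, is controlling the Nijenhuis tensor, which enters both the curvature and the identities relating $\De_{d}$ to $\De_{\pa}+\De_{\bar{\pa}}$; this is precisely the source of the $\sup|N_{J}|$ factors in Theorem \ref{T11}, and the strict sign should appear in the same smallness regime that makes those estimates effective. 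The conjecture for arbitrary closed Riemannian manifolds, lacking any such compatible structure, remains outside these methods.
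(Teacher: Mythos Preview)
The Hopf Conjecture is recorded in the paper as an open conjecture, not as a theorem with a proof, so there is no ``paper's own proof'' to compare against. What the paper actually proves is Theorem~\ref{T10}, the special case of a compact almost K\"ahler manifold with $sec\leq -K<0$ and small Nijenhuis tensor $|\nabla J|^{2}\leq c(n)K$. Your outline targets exactly this case and follows the same architecture as the paper: pass to the universal cover, use negative curvature to obtain a bounded primitive of $\tilde\w$, establish $L^{2}$-vanishing off the middle degree, invoke Gromov's perturbation argument for non-triviality in degree $n$, and conclude via Atiyah's $L^{2}$-index formula.

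There is, however, one genuine gap. You assert that
\[
\ker\De_{\bar\pa}\cap\ker\De_{\pa}\cap\Om^{p,q}_{(2)}(\tilde X)=0\quad (p+q\neq n)
\]
implies $\mathcal{H}^{k}_{(2);d}(\tilde X)=0$ for $k\neq n$. In the non-integrable setting there is no Hodge decomposition $\mathcal{H}^{k}_{(2);d}=\bigoplus_{p+q=k}\mathcal{H}^{p,q}_{(2);d}$: a $d$-harmonic $k$-form need not have $d$-harmonic $(p,q)$-components, since $d$ of one component can cancel against $\mu$ or $\bar\mu$ of a neighbouring one. Your ``hence'' is therefore unjustified. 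The paper avoids this by using not Theorem~\ref{T2} but the second estimate in Theorem~\ref{T11} (i.e.\ Corollary~\ref{C2}), which gives a spectral gap for $\De_{d}$ itself on $\Om^{k}_{0}(\tilde X)$ and hence directly yields $\mathcal{H}^{k}_{(2);d}(\tilde X)=0$. That $\De_{d}$-estimate is precisely where the smallness hypothesis on $N_{J}$ enters, so the Nijenhuis control is needed already for the vanishing step, not only for positivity as you suggest. For the strict positivity in degree $n$ the paper applies Gromov's twisted-line-bundle argument to the Euler operator $d+d^{\ast}$ (Theorems~\ref{T6} and~\ref{T9}) rather than an $L^{2}$ Riemann--Roch computation; this is the version compatible with the $\De_{d}$-spectral gap one actually has. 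Finally, the paper makes no claim about the borderline case $sec\leq 0$, and no limiting argument of the kind you sketch is provided there.
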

	This is true for $n=1$ and $2$ as the Gauss–Bonnet integrands in these two low dimensional cases have the desired sign.
	 However, in higher dimensions, it is known that the sign of the sectional curvature does not determine the sign of the Gauss--Bonnet--Chern integrand.  For some compact Riemannian manifolds X of dimension $2n$ with some suitable pinched negative sectional curvature, the Euler number of these manifolds had been studied by many authors (cf. \cite{DX,JX}).

Let $(X, g)$ be a closed Riemannian manifold and $\pi:(\tilde{X},\tilde{g})\rightarrow(X,g)$ be its universal covering with $\tilde{g}=\pi^{\ast}g$. A differential form $\a$ on $X$ is called $\tilde{d}$(bounded) if its lift $\pi^{\ast}\a$ is a $d$(bounded) form on $(\tilde{X},\tilde{g})$. Gromov observed that if $(X,g)$ is a complete simply-connected manifold with strictly negative sectional curvature, then every smooth bounded closed form of degree $k\geq2$ is $d$(bounded). He subsequently proved the Hopf conjecture in the Kähler case by exploiting  Kähler identities. A fundamental obstruction to extending Gromov's argument beyond the integrable setting lies in the failure of the standard Kähler identities. In the non-integrable case, the crucial commutation relations that underpin Gromov's $L^{2}$-estimates no longer hold. Our approach overcomes these difficulties by establishing a new family of $L^{2}$-estimates that quantitatively absorb the non-integrability of the almost complex structure.
	
	In the symplectic case, inspired by Kähler geometry, one can analogously define symplectic hyperbolic manifolds. A closed almost K\"{a}hler manifold $(X,\w)$ is called symplectic hyperbolic if the lift $\tilde{\w}$ of $\w$ to the universal covering $(\tilde{X},\tilde{\w})\rightarrow(X,\w)$ is $d$(bounded)  on $(\tilde{X},\tilde{\w})$ \cite{Hua22,Hua23}. Hind and Tomassini \cite{HT} constructed a $d$(bounded) complete almost K\"{a}hler manifold $X$ satisfying $\mathcal{H}^{1}_{(2)}(X)\neq\{0\}$  by employing methods from  contact geometry. 
	
Denote by $h^{k}_{(2)}(X)$ the $k$-th $L^{2}$-Betti number of Riemannian manifold $X$. In addition to the Hopf conjecture, another long-standing problem concerning negatively curved manifolds is the Singer conjecture (cf.\cite[Conjecture 2]{Dod}), which predicts the vanishing of all middle-degree $L^{2}$-Betti numbers (cf.\cite{DX,JX}).  
\begin{conjecture}[Singer Conjecture]
	Let $X$ be a closed $2n$-dimensional Riemannian manifold with negative sectional curvature. Then
	\begin{equation*}
	\left\{
	\begin{aligned}
	h^{k}_{(2)}(X)=0, &\quad k\neq n,\\
	h^{n}_{(2)}(X)>0.& \\
	\end{aligned}  
	\right.
	\end{equation*}	
\end{conjecture}
It is well known that the Singer conjecture implies the Hopf conjecture via the Euler–Poincaré formula
$$\chi(X)=\sum_{k=0}^{2n}(-1)^{k}h_{(2)}^{k}(X).$$
The first author of this article proved the Singer conjecture under a certain condition on the Nijenhuis tensor in \cite{Hua23}.
	
The main application in our article is that we can confirm that the Hopf conjecture is correct in the case of almost K\"{a}hler manifold $X$ with small Nijenhuis tensor. A special case is that Nijenhuis tensor vanishes, i.e., the manifold $X$ is K\"{a}herian (see \cite{Gro}). The primary application of our analytic estimates is to extend Gromov's result in \cite{Gro} to the almost K\"ahler setting. We establish that the Hopf conjecture remains valid for almost K\"ahler manifolds provided the Nijenhuis tensor is bounded by a constant depending on the sectional curvature. In particular, we prove that the components of the Hirzebruch $\chi_{y}$-genus satisfy the inequality $(-1)^{n-p}\chi_{p}(X) \geq 1$ for all $p = 0, 1, \dots, n$. This not only confirms the Hopf conjecture for the Euler number but also yields stronger topological constraints on all $\chi_{p}$. Our approach combines the $L^{2}$-estimates established in Theorem \ref{Thm6} with a refined vanishing theorem for the operator $\bar{\partial}+\bar{\partial}^{\ast}$ and Atiyah's $L^{2}$-index theorem \cite{Ati}.
\begin{theorem}\label{Thm2}
	Let $(X,J,\w)$ be a closed $2n$-dimensional almost K\"{a}hler manifold with  negative sectional curvature, 
	i.e. there exists a constant $K>0$ such that
	$$\mathrm{sec}\leq -K.$$
	If the Nijenhuis tensor of $X$ satisfies
	\[|N_J|^{2}\leq C(n)K,\]
	where $C(n)$ is a uniform positive constant, then 	
	\begin{equation*}
	(-1)^{n-p}\chi_{p}(X)\geq1,\ \forall p=0,\cdots,n.
	\end{equation*}
	In particular, the Euler number of $X$ obeys
	\[(-1)^{n}\chi(X)\geq n+1.\]
\end{theorem}
This theorem is one of the main achievements of the article. It shows that even in the non-integrable setting, the Hopf conjecture remains valid provided the Nijenhuis tensor is sufficiently small relative to the curvature. The constant $C(n)$ is explicit and depends only on the complex dimension.
\begin{remark}
	(1) According to the identity for the Nijenhuis tensor  provided  in Proposition \ref{Pro4},  the hypothesis of Theorem \ref{Thm2} is equivalent to the curvature condition
	\[\hat{W}(\w^{\sharp},\w^{\sharp})-\frac{n-1}{2n-1}s_{g}\leq\frac{C(n)}{16}K.\]
	(2) If for all $i,j$, the complex sectional curvature satisfies
	\[
	\bigl(\mathfrak{R}^{\mathbb{C}}(Z_{i}\wedge Z_{j}),\overline{Z_{i}\wedge Z_{j}}\bigr) \geq -\frac{C(n)}{32(n^{2}-n)}K,
	\]
	then \[|N_{J}|^{2}\leq C(n)K .\] Note that any smooth Kähler metric satisfies  $\mathfrak{R}^{\C}(Z_{i}\wedge Z_{j}),\overline{Z_{i}\wedge Z_{j} })=0$ for all $i,j$ (cf.\cite{Her00}).
\end{remark}
\noindent\textbf{Organization of the paper.} The article is organized as follows. Section 2 recalls basic facts about almost K\"ahler manifolds and the decomposition of the exterior derivative. In Section 3, we develop the $L^{2}$-Hodge theory on complete almost K\"ahler manifolds and prove several dualities and the Lefschetz decomposition for the intersection $\ker\Delta_{\partial}\cap\ker\Delta_{\bar{\partial}}$. Section 4 establishes vanishing theorems for $L^{2}$-harmonic $(p,q)$-forms under the assumption that the K\"ahler form is vanishingly exhaustible. In Section 5, we refine the $L^{2}$-estimates under more concrete geometric conditions, including the case where $\omega$ is $d$(bounded). Section 6 applies these analytic results to compact almost K\"ahler manifolds with negative sectional curvature and small Nijenhuis tensor, proving lower bounds for the Hirzebruch $\chi_{y}$-genus and confirming the Hopf conjecture in this setting.

\section{Almost K\"{a}hler manifold }
We begin by recalling some basic definitions and results concerning differential forms on almost complex and almost Hermitian manifolds.

Let $X$ be a $2n$-dimensional manifold (without boundary) and $J$ be a smooth almost complex structure on $X$. The almost complex structure $J$ acts naturally on the space  $\Om^{k}(X,\C):=\Om^{k}(X)\otimes\C$, inducing a decomposition by type:
	$$\Om^{k}(X,\C)=\bigoplus_{p+q=k}\Om^{p,q}(X),$$
	where  $\Om^{p,q}(X)$ denotes the space of complex forms of type $(p,q)$ with respect to $J$ \cite{HMT}. On $\Om^{p,q}$, the action of $J$ is given by $J(\bullet)=\sqrt{-1}^{q-p}(\bullet)$, which is an isomorphism. If $k$ is even, $J$ also acts on $\Om^{k}(X,\mathbb{C})$ as an involution.  The exterior derivative $d$ acts on these spaces as follows:
	\[d:\Om^{p,q}\rightarrow\Om^{p+2,q-1}\oplus\Om^{p+1,q}\oplus\Om^{p,q+1}\oplus\Om^{p-1,q+2},\]
	and consequently $d$ splits into four components:
	\[d=\mu+\pa+\bar{\pa}+\bar{\mu},\]
	where each component is a graded algebra derivation. The operators $\mu$ and $\bar{\mu}$ are $0$-order differential operators. 
	The bidegrees of the components are given by
	\[|\mu|=(2,-1),\ |\pa|=(1,0),\ |\bar{\pa}|=(0,1),\ |\bar{\mu}|=(-1,2).\]
	Expanding the equation $d^{2}=0$ we obtain the following set of equations:
	\begin{equation}\label{E27}
	\begin{split}
	&\mu^{2}=0,\\
	&\mu\pa+\pa\mu=0,\\
	&\pa^{2}+\mu\bar{\pa}+\bar{\pa}\mu=0,\\
	&\pa\bar{\pa}+\bar{\pa}\pa+\mu\bar{\mu}+\bar{\mu}\mu=0,\\
	&\bar{\pa}^{2}+\bar{\mu}\pa+\pa\bar{\mu}=0,\\
	&\bar{\mu}\bar{\pa}+\bar{\pa}\bar{\mu}=0,\\
	&\bar{\mu}^{2}=0.\\
	\end{split}
	\end{equation} 
The integrability theorem of Newlander and Nirenberg states that the almost complex structure $J$ is integrable if and only if $N_{J}=0$, where
$$N_{J}:TX\otimes TX\rightarrow TX,$$
denotes the Nijenhuis tensor
$$N_{J}(X,Y):=[X,Y]+J[X,JY]+J[JX,Y]-[JX,JY].$$ 
One can show that $\mu+\bar{\mu}$ is equal, up to a scalar, to the dual of the Nijenhuis tensor (cf. \cite{CW20}). In fact,
$$\mu+\bar{\mu}=-\frac{1}{4}(N_{J}\otimes{\rm{id}_{\C} })^{\ast}.$$
where the right hand side has been extended over all forms as a derivation. In particular,  $J$ is integrable if only if $N_{J}=0$, i.e $\mu=0$ \cite{CW20,CW21}.
	
Now, let $(X,J,g)$ be an almost Hermitian manifold of real dimension $2n$, and let $\na$ be the Levi-Civita connection of $(X,g)$. The curvature of $(X,g)$ is defined by 
\[R(e_{i},e_{j})e_{k}=[\na_{e_{i}},\na_{e_{j}}]e_{k}-\na_{[e_{i},e_{j}]}e_{k},\]and the Riemannian curvature tensor is given by $R(e_{i},e_{j},e_{k},e_{l}):=g(R(e_{i},e_{j})e_{k},e_{l})$. 
The associated scalar curvature \(s_g\) and \(\ast\)-scalar curvature \(s_g^*\) are given by
\[s_{g}:=\sum_{i,j=1}^{2n}R(e_{i},e_{j},e_{i},e_{j})\quad and\quad s_{g}^{\ast}:=\sum_{i,j=1}^{2n}R(e_{i},e_{j},Je_{i},Je_{j}).\] 
Let $W$ be the Weyl tensor of $(X,g)$. The following useful formula was proved in \cite{dRS}:
\[(2n-1)s_{g}^{\ast}-s_{g}=2(2n-1)\hat{W}(\w^{\sharp},\w^{\sharp}),\]
where $\hat{W}$ is defined by $\hat{W}(e_{i}\wedge e_{j},e_{k}\wedge e_{l}):=W(e_{i},e_{j},e_{k},e_{l})$ 
and $\w^{\sharp}$ denotes the dual tensor of the fundamental form $\w$.

A fundamental relation between these objects in the almost Kähler setting is given by the following proposition.
\begin{proposition}(\cite[Lemma 2.4]{Sek})\label{Pro4}
	Let $(X,J,\w)$ be a closed $2n$-dimension almost K\"{a}hler manifold. Then the following identities hold:
	\[|\na{J}|^{2}=\frac{1}{4}|N_J|^{2}=2(s_{g}^{\ast}-s_{g})=4\big{(}\hat{W}(\w^{\sharp},\w^{\sharp})-\frac{n-1}{2n-1}s_{g}\big{)}.\]
\end{proposition}

We extended the metric to a complex bilinear form; we now extend the curvature operator at $x\in X$,
 $\mathfrak{R}:\La^{2}T_{x}X\rightarrow\La^{2}T_{x}X$, to a complex linear transformation $\mathfrak{R}^{\C}:
 \La_{2}T_{x}X\otimes\C\rightarrow \La_{2}T_{x}X\otimes\C$.  Following \cite[Lemma 3.3]{Her00} and \cite[Lemma 2.4 ]{Sek} 
 or \cite[Equation (3.1)--(3.3)]{SV}), we have

\begin{lemma}(\cite[Lemma 3.3]{Her00})
Let $Z_{j}$ be any orthonormal basis of $T_{x}^{1,0}$. Then
\[s^{\ast}_{g}-s_{g}=-4\sum_{i,j=1}^{n}(\mathfrak{R}^{\C}(Z_{i}\wedge Z_{j}),\overline{Z_{i}\wedge Z_{j} } ) .\]
\end{lemma}
The Lefschetz operator  $L:\Om^{p,q}\rightarrow\Om^{p+1,q+1}$ is defined by
\[L(\a)=\w\wedge\a.\]
The dual Lefschetz operator $\La$ is the adjoint of $L$, given by $\La=\star^{-1} L\star$ \cite{Huy}. 
The operators \(\bar{\mu}, \bar{\partial}, \partial, \mu \) (each generically denoted by \(\delta\)) each possess an \(L^2\)-adjoint;
we denote the adjoint of \(\delta\) by \(\delta^* \).
When $X$ is closed, one may verify the following relations: \[\bar{\mu}^{\ast}=-\star\mu\star\quad and\quad\bar{\pa}^{\ast}=-\star\pa\star.\]
For each such operator $\de$, the associated Laplacian is defined by
\[\De_{\de}:=\de\de^{\ast}+\de^{\ast}\de.\]
It satisfies
\[\star\De_{\bar{\de}}=\De_{\de}\star,\]
and consequently, the operator $\De_{\bar{\pa}}+\De_{\pa}$ commute with the Hodge star operator:
\[\star(\De_{\bar{\pa}}+\De_{\pa})=(\De_{\bar{\pa}}+\De_{\pa})\star.\]

On an almost K\"{a}hler manifold, Cirici and Wilson \cite{CW20} constructed the almost K\"{a}hler identities, involving the differential operators $\pa$ and $\bar{\pa}$, the operators $\mu$ and $\bar{\mu}$,  the Lefschetz operator $L$, and their complex conjugates and adjoints. We will recall some identities which  will be used in this article.
	
If $A$ and $B$ are operators on forms, defined the graded commutator
	$$[A,B]=AB-(-1)^{\deg A\cdot \deg B}BA,$$
	where $\deg T$ is the integer $l$ for which $$T:\bigoplus_{p+q=k}\Om^{p,q}(X)\rightarrow\bigoplus_{p+q=k+l}\Om^{p,q}(X).$$
	\begin{proposition}(cf. \cite{CW20}) \label{aki}
		For any almost K\"{a}hler manifold the following identities hold:
			\begin{enumerate}
			\item[(1)]  $[\pa,\bar{\pa}^{\ast}]=[\bar{\mu}^{\ast},\bar{\pa}]+[\mu,\pa^{\ast}]$ and $[\bar{\pa},\pa^{\ast}]=[\mu^{\ast},\pa]+[\bar{\mu},\bar{\pa}^{\ast}]$.
			\item[(2)]  $[L,\bar{\mu}^{\ast}]=\sqrt{-1}\mu$, $[L,\mu^{\ast}]=-\sqrt{-1}\bar{\mu}$ and   $[\La,\bar{\mu}]=\sqrt{-1}\mu^{\ast}$, $[\La,\mu]=-\sqrt{-1}\bar{\mu}^{\ast}$.
			\item[(3)] $[L,\bar{\pa}^{\ast}]=-\sqrt{-1}\pa$, $[L,\pa^{\ast}]=\sqrt{-1}\bar{\pa}$ and  $[\La,\bar{\pa}]=-\sqrt{-1}\pa^{\ast}$, $[\La,\pa]=\sqrt{-1}\bar{\pa}^{\ast}$.
	\end{enumerate}
	\end{proposition}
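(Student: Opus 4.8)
The plan is to split the nine identities into two layers. Parts (2) and (3) are the genuine first-order ``primitive'' commutator identities, and I treat them as the content; part (1) is a second-order (Bochner--Kodaira type) consequence that I will deduce from (2) and (3) together with the structure equations (\ref{E27}) by a graded Jacobi manipulation. As a preliminary I would fix the bidegree bookkeeping. Since $(X,J,\w)$ is almost K\"{a}hler, $\w$ is a real $(1,1)$-form with $d\w=0$, and each of $\mu,\pa,\bar{\pa},\bar{\mu}$ obeys the Leibniz rule and has pure bidegree, so every $[L,\cdot]$ acts by wedging with the corresponding component of $d\w$. Decomposing $0=d\w=\mu\w+\pa\w+\bar{\pa}\w+\bar{\mu}\w$ into its bidegrees $(3,0),(2,1),(1,2),(0,3)$ forces each summand to vanish, whence
$$[L,\mu]=[L,\pa]=[L,\bar{\pa}]=[L,\bar{\mu}]=0,$$
and, taking formal adjoints, $[\La,\mu^{\ast}]=[\La,\pa^{\ast}]=[\La,\bar{\pa}^{\ast}]=[\La,\bar{\mu}^{\ast}]=0$. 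These are not among the stated identities but record that $L,\La$ commute with every component of $d$.

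Next I would observe that the four generators $\pa,\bar{\pa},\mu,\bar{\mu}$ and their adjoints all carry distinct bidegrees: $\bar{\pa}^{\ast},\pa^{\ast},\bar{\mu}^{\ast},\mu^{\ast}$ have bidegrees $(0,-1),(-1,0),(1,-2),(-2,1)$. Consequently all four $\La$-relations of (2)--(3) are precisely the bidegree components of the single master relation
$$[\La,d]=\sqrt{-1}\,(\mu^{\ast}-\bar{\mu}^{\ast}+\bar{\pa}^{\ast}-\pa^{\ast}),$$
and the four $L$-relations are its formal adjoint $[L,d^{\ast}]=\sqrt{-1}(\mu-\bar{\mu}+\bar{\pa}-\pa)$. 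It therefore suffices to prove this master relation. Relation (2) is purely algebraic: $\mu,\bar{\mu},L,\La$ are zeroth-order tensorial operators (recall $\mu+\bar{\mu}$ is, up to scale, the dual of the Nijenhuis tensor), so $[\La,\mu]=-\sqrt{-1}\bar{\mu}^{\ast}$ and $[\La,\bar{\mu}]=\sqrt{-1}\mu^{\ast}$ are pointwise identities on the exterior algebra, checkable in the constant-coefficient model $(\C^{n},\w_{0})$ through the Weil commutation relations between the $\mathfrak{sl}(2,\C)$-triple and the wedge/contraction operators. Relation (3) is the honest K\"{a}hler identity: $[\La,\pa]$ and $\sqrt{-1}\bar{\pa}^{\ast}$ are first-order operators whose principal symbols coincide by the same pointwise computation as on flat $\C^{n}$, so their difference is a tensor; evaluating that tensor in an almost-K\"{a}hler normal frame at an arbitrary point (obtained from Darboux plus a pointwise unitary adjustment, so that $\w$ osculates $\w_{0}$ to second order --- exactly where $d\w=0$ is used) shows the zeroth-order remainder vanishes, as in the flat model.

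With (2) and (3) in hand I would deduce (1) algebraically. Using (3), write $\bar{\pa}^{\ast}=-\sqrt{-1}[\La,\pa]$; since $\La$ has even degree and $\pa$ odd degree, the graded Jacobi identity gives $[\pa,[\La,\pa]]=[\La,\pa^{2}]$, so $[\pa,\bar{\pa}^{\ast}]=-\sqrt{-1}[\La,\pa^{2}]$. The third line of (\ref{E27}) reads $\pa^{2}=-[\mu,\bar{\pa}]$; substituting, expanding $[\La,[\mu,\bar{\pa}]]=[[\La,\mu],\bar{\pa}]+[\mu,[\La,\bar{\pa}]]$ (again using that $\La$ is even), and inserting $[\La,\mu]=-\sqrt{-1}\bar{\mu}^{\ast}$ and $[\La,\bar{\pa}]=-\sqrt{-1}\pa^{\ast}$ collapses the factors of $\sqrt{-1}$ and yields
$$[\pa,\bar{\pa}^{\ast}]=[\bar{\mu}^{\ast},\bar{\pa}]+[\mu,\pa^{\ast}].$$
The second identity of (1) follows by the identical manipulation starting from $\pa^{\ast}=\sqrt{-1}[\La,\bar{\pa}]$ and the fifth line $\bar{\pa}^{2}=-[\bar{\mu},\pa]$ of (\ref{E27}), or by the conjugation symmetry $\pa\leftrightarrow\bar{\pa}$, $\mu\leftrightarrow\bar{\mu}$.

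The one genuinely analytic step, and the main obstacle, is the K\"{a}hler identity (3): one must confirm that, unlike the full operator $d$, passing to the non-integrable setting introduces \emph{no} torsion correction into $[\La,\pa]$ and $[\La,\bar{\pa}]$. The bidegree count already makes this plausible, since no generator of bidegree $(0,-1)$ or $(-1,0)$ other than $\bar{\pa}^{\ast}$, $\pa^{\ast}$ is available to serve as a correction term; turning this into a proof requires the symbol-plus-osculation argument above together with careful tracking of the sign and $\sqrt{-1}$ conventions in the definitions of the adjoints and of the graded bracket. Once (3) is secured, the algebraic identity (2) and the Jacobi deduction of (1) are routine.
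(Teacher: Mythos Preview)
The paper does not prove this proposition at all; it simply records it as a citation of \cite{CW20} and moves on. Your proposal therefore supplies substantially more than the paper does. The overall architecture is sound: identifying (2) and (3) as the bidegree components of the single relation $[\La,d]=\sqrt{-1}(\mu^{\ast}-\bar{\mu}^{\ast}+\bar{\pa}^{\ast}-\pa^{\ast})$ is correct, (2) is indeed a pointwise exterior-algebra identity between zeroth-order operators, and your graded-Jacobi deduction of (1) from (2), (3) and the third line $\pa^{2}=-[\mu,\bar{\pa}]$ of (\ref{E27}) is clean and checks, signs and factors of $\sqrt{-1}$ included. That derivation of (1) is in fact the route taken in \cite{CW20}.

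The one place that remains a sketch is your argument for (3). The phrasing ``Darboux plus a pointwise unitary adjustment, so that $\w$ osculates $\w_{0}$ to second order'' is slightly off: Darboux makes $\w=\w_{0}$ \emph{exactly}, while it is $J$ (and hence $g$) that can only be made standard at the base point, with first derivatives of $J$ generically nonzero there. Those first derivatives do feed into the zeroth-order parts of $\pa,\bar{\pa},\pa^{\ast},\bar{\pa}^{\ast}$, so the flat computation alone does not immediately kill the defect. What rescues the argument is that on an almost K\"{a}hler manifold $\na J$ is entirely encoded in the Nijenhuis tensor (the paper recalls $|\na J|^{2}=\tfrac14|N_{J}|^{2}$), and the resulting zeroth-order contributions organise into the $\mu,\bar{\mu}$ pieces rather than into the $(0,-1)$ or $(-1,0)$ slots. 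Your closing bidegree remark is the right intuition, but as stated it only says no other \emph{named} generator has the required bidegree, which is a heuristic, not a proof; the actual verification still requires the computation carried out in \cite{CW20}. Since the paper itself defers to that reference, your outline is appropriate for the context---just be explicit that (3) is being quoted from \cite{CW20} rather than fully reproven by the osculation sketch.
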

We have one more set of useful relations, which are related to hard Lefschetz duality.
	\begin{corollary}(cf. \cite[Corollary 3.5]{CW20}) \label{C6}
	For any almost K\"{a}hler manifold the following identities hold:
	\begin{enumerate}
		\item[(1)] $[L,\De_{\bar{\pa}}+\De_{\pa}]=0$,
		\item[(2)] $[\La,\De_{\bar{\pa}}+\De_{\pa}]=0$.
	\end{enumerate}
\end{corollary}
\begin{proof}
This conclusion follows directly from \cite[Corollary 3.5]{CW20}.	
\end{proof}

	\section{$L^{2}$-Harmonic Forms and Basic Dualities}

	In this section, we now turn to the core of this article: the $L^{2}$-Hodge theory on complete almost Kähler manifolds. We first define the relevant spaces of harmonic forms and establish their basic properties.
	
	\subsection{de Rham harmonic $(p,q)$-forms}
	
	For any almost Hermitian manifold $(X,J,g)$ of dimension $2n$, there is an
	associated Hodge-star operator 
	\[\star:\Om^{p,q}\rightarrow \Om^{n-q,n-p}\]
	defined by 
	\[\a\wedge\star\bar{\b}=\langle\a,\b\rangle\mathrm{dVol}_{g},\]
	where $\mathrm{dVol}_{g}$ is the volume form determined by the metric $g$. The global $L^{2}$-inner product is defined as
	\[ (u,v)=\int_{X}\langle u,v\rangle dV=\int_{X}u\wedge\star\bar{v},\]
	where $dV=\frac{\w^{n}}{n!}$ is the volume form determined by $\w$. We also write 
	\[|u|^{2}=\langle u,u\rangle, \quad\|u\|_{L^2}^{2}=\int_{X}|u|^{2}dV. \]
	
	Define the space of  $L^{2}$-harmonic forms of bidegree $(p,q)$ as 
$$\mathcal{H}^{p,q}_{(2);d}(X):=\{\a\in\Om^{p,q}_{(2)}(X):\De_{d}\a=0 \},$$
where
$$\Om^{p,q}_{(2)}(X):=\{\a\in\Om^{p,q}(X):\|\a\|_{L^{2}(X)}<\infty\}.$$

\begin{lemma}(cf. \cite[Lemma 3.3]{Hua22})\label{L1}
If an $L^{2}$ (p,q)-form $\a$ on a complete almost Hermitian manifold $X$ is $\De_{d}$-harmonic form, then $d\a=0, d^{\ast}\a=0$.	
\end{lemma}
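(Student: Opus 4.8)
The statement to prove is Lemma~\ref{L1}: on a \emph{complete} almost K\"ahler manifold, an $L^2$ $(p,q)$-form $\a$ satisfying $\De_d\a=0$ must in fact satisfy $d\a=0$ and $d^\ast\a=0$. The plan is to exploit completeness through the standard device of cutting off at infinity and integrating by parts, so that the mixed $L^2$ inner products $(dd^\ast\a,\a)$ and $(d^\ast d\a,\a)$ can legitimately be separated into $\|d\a\|^2$ and $\|d^\ast\a\|^2$ even though $\a$ is only assumed to be in $L^2$ rather than compactly supported.

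First I would recall that on a complete Riemannian manifold one can construct a family of cutoff functions $\chi_R\in C_0^\infty(X)$ with $0\le\chi_R\le1$, $\chi_R\equiv1$ on the ball $B_R$ of radius $R$ about a fixed point, and $|d\chi_R|\le 2/R$ uniformly; this is where completeness is used (geodesic balls are relatively compact and exhaust $X$). The heart of the argument is then the computation of the global pairing $(\De_d\a,\chi_R^2\a)$. Since $\De_d\a=0$ by hypothesis, this pairing vanishes for every $R$. On the other hand, writing $\De_d=dd^\ast+d^\ast d$ and integrating by parts against the compactly supported test form $\chi_R^2\a$, one obtains
\begin{equation*}
0=(\De_d\a,\chi_R^2\a)=(d^\ast\a,d^\ast(\chi_R^2\a))+(d\a,d(\chi_R^2\a)).
\end{equation*}
Expanding $d(\chi_R^2\a)=\chi_R^2\,d\a+2\chi_R\,d\chi_R\wedge\a$ and similarly for $d^\ast$ using the interior product by $d\chi_R$, I would isolate the ``diagonal'' terms $\|\chi_R d\a\|^2+\|\chi_R d^\ast\a\|^2$ and bound the remaining ``cross'' terms by Cauchy--Schwarz. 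Each cross term carries a factor $|d\chi_R|\le 2/R$ together with $|\a|$, so it is controlled by $(C/R)\|\a\|_{L^2}\bigl(\|\chi_R d\a\|+\|\chi_R d^\ast\a\|\bigr)$.

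Next I would absorb the cross terms: by the elementary inequality $ab\le\tfrac12 a^2+\tfrac12 b^2$ applied with a small parameter, the terms involving $|d\chi_R|$ can be split so that a fraction of $\|\chi_R d\a\|^2+\|\chi_R d^\ast\a\|^2$ is moved to the left and the leftover is estimated purely in terms of $(1/R^2)\|\a\|_{L^2}^2$. This yields an inequality of the shape
\begin{equation*}
\tfrac12\bigl(\|\chi_R d\a\|^2+\|\chi_R d^\ast\a\|^2\bigr)\le \frac{C}{R^2}\|\a\|_{L^2(X)}^2.
\end{equation*}
Since $\a\in L^2$, the right-hand side tends to $0$ as $R\to\infty$. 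Because $\chi_R\to1$ pointwise and monotonically, the monotone (or dominated) convergence theorem gives $\|\chi_R d\a\|^2\to\|d\a\|^2$ and $\|\chi_R d^\ast\a\|^2\to\|d^\ast\a\|^2$. Hence $\|d\a\|^2+\|d^\ast\a\|^2=0$, forcing $d\a=0$ and $d^\ast\a=0$.

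The main obstacle, and the only place where genuine care is needed, is the bookkeeping in the integration by parts: one must justify that $\De_d\a\in L^2$ (so that the pairing $(\De_d\a,\chi_R^2\a)$ makes sense and equals $0$), and that the adjoint relations $(d\a,d(\chi_R^2\a))=(d^\ast d\a,\chi_R^2\a)$ hold with the compactly supported test form, which is where the cutoff is indispensable since $\a$ itself need not be compactly supported. A secondary subtlety is that $d^\ast(\chi_R^2\a)$ involves the contraction $\iota_{\nabla\chi_R^2}\a$ rather than a wedge, but this is the formal adjoint of the wedge term and is handled identically by Cauchy--Schwarz with the same factor $|d\chi_R|$. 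Everything else is the routine cutoff-and-limit scheme, and the completeness hypothesis enters exactly through the existence of the good exhaustion functions $\chi_R$ with uniformly small gradient.
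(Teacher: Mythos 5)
Your proof is correct: this is the classical Gaffney cutoff argument (complete manifold $\Rightarrow$ cutoffs $\chi_R$ with $|d\chi_R|\lesssim 1/R$, pair $\De_d\a=0$ against $\chi_R^2\a$, absorb the cross terms, let $R\to\infty$), and the only points needing care — elliptic regularity so that $\a$ is smooth and the integration by parts against the compactly supported test form is legitimate, plus the interior-product form of $d^\ast(\chi_R^2\a)$ — are exactly the ones you flag and handle. The paper itself gives no proof of this lemma (it is quoted from \cite[Lemma 3.3]{Hua22}), and the proof in that reference is this same standard cutoff-and-limit scheme, so your route coincides with the intended one.
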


The $d^{\La}$ operator is related via the Hodge star operator defined with respect to the compatible metric $g$ by the relation, see \cite[Lemma 2.9]{Tseng-Yau},
$$d^{\La}=(-1)^{k+1}\star J^{-1}d\star J^{-1}=-{\star}J^{-1}dJ\star.$$

\begin{lemma}(\cite[Theorem 6.7]{TT} and \cite[Lemma 3.2]{Hua22})\label{L6}
		\begin{equation*}
		\ker d\cap\ker d^{\ast}\cap\Om^{p,q}(X)=\ker d^{\La}\cap\ker d^{\La_{\ast} }\cap\Om^{p,q}(X)
		\end{equation*}
	\end{lemma}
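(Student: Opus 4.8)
The plan is to exploit the fact that each of the four operators $d$, $d^{\ast}$, $d^{\La}$, $d^{\La_{\ast}}$ splits into four bidegree-homogeneous pieces whose bidegree shifts are pairwise distinct, so that the vanishing of any one of these operators on a pure-type form is equivalent to the simultaneous vanishing of each of its four components. Once this is recognized, the asserted equality of kernels becomes a matter of matching up components via the almost K\"{a}hler commutation relations.

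First I would record the decomposition $d=\mu+\pa+\bar{\pa}+\bar{\mu}$ and, taking formal adjoints, $d^{\ast}=\mu^{\ast}+\pa^{\ast}+\bar{\pa}^{\ast}+\bar{\mu}^{\ast}$. For $\a\in\Om^{p,q}(X)$ the four pieces of $d\a$ lie in $\Om^{p+2,q-1}$, $\Om^{p+1,q}$, $\Om^{p,q+1}$, $\Om^{p-1,q+2}$, which are mutually orthogonal summands of $\Om^{p+q+1}(X,\C)$; hence $d\a=0$ exactly when $\mu\a=\pa\a=\bar{\pa}\a=\bar{\mu}\a=0$. Similarly the four pieces of $d^{\ast}\a$ land in the pairwise distinct spaces $\Om^{p-2,q+1}$, $\Om^{p-1,q}$, $\Om^{p,q-1}$, $\Om^{p+1,q-2}$, so $d^{\ast}\a=0$ exactly when $\mu^{\ast}\a=\pa^{\ast}\a=\bar{\pa}^{\ast}\a=\bar{\mu}^{\ast}\a=0$.

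Next I would compute $d^{\La}$ through the almost K\"{a}hler identities. Writing $d^{\La}=[d,\La]$ (which, up to an overall sign, is the operator $-{\ast}J^{-1}dJ{\ast}$), linearity of the graded bracket together with parts (2) and (3) of the almost K\"{a}hler identities, namely $[\La,\pa]=\sqrt{-1}\bar{\pa}^{\ast}$, $[\La,\bar{\pa}]=-\sqrt{-1}\pa^{\ast}$, $[\La,\mu]=-\sqrt{-1}\bar{\mu}^{\ast}$ and $[\La,\bar{\mu}]=\sqrt{-1}\mu^{\ast}$, yields $d^{\La}=\sqrt{-1}\,(-\mu^{\ast}+\pa^{\ast}-\bar{\pa}^{\ast}+\bar{\mu}^{\ast})$. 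The crucial observation is that its four summands are nonzero scalar multiples of precisely the operators $\mu^{\ast},\pa^{\ast},\bar{\pa}^{\ast},\bar{\mu}^{\ast}$ occurring in $d^{\ast}$, carrying the same pairwise distinct bidegree shifts as above. Consequently $d^{\La}\a=0$ iff $\mu^{\ast}\a=\pa^{\ast}\a=\bar{\pa}^{\ast}\a=\bar{\mu}^{\ast}\a=0$, i.e. iff $d^{\ast}\a=0$, giving $\ker d^{\La}\cap\Om^{p,q}(X)=\ker d^{\ast}\cap\Om^{p,q}(X)$.

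Finally, taking formal adjoints of the previous expression gives $d^{\La_{\ast}}=-\sqrt{-1}\,(-\mu+\pa-\bar{\pa}+\bar{\mu})$, whose components are scalar multiples of $\mu,\pa,\bar{\pa},\bar{\mu}$ with the distinct bidegree shifts of $d$; the same separation argument shows $d^{\La_{\ast}}\a=0$ iff $d\a=0$, so $\ker d^{\La_{\ast}}\cap\Om^{p,q}(X)=\ker d\cap\Om^{p,q}(X)$. Intersecting the two equalities yields $\ker d^{\La}\cap\ker d^{\La_{\ast}}\cap\Om^{p,q}(X)=\ker d^{\ast}\cap\ker d\cap\Om^{p,q}(X)$, which is the claim. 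The only point genuinely requiring care is reconciling the analytic definition $d^{\La}=-{\ast}J^{-1}dJ{\ast}$ with the bracket $[d,\La]$ up to sign, so that the commutator identities may legitimately be invoked; once that is pinned down the statement collapses to bookkeeping of the four distinct bidegrees, and notably no integrability or $L^{2}$ hypothesis is needed.
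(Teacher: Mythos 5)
Your proof is correct, but it takes a genuinely different route from the paper's. You expand $d^{\La}=[d,\La]$ via the Cirici--Wilson almost K\"{a}hler identities into $\sqrt{-1}(-\mu^{\ast}+\pa^{\ast}-\bar{\pa}^{\ast}+\bar{\mu}^{\ast})$ (and dually $d^{\La_{\ast}}=-\sqrt{-1}(-\mu+\pa-\bar{\pa}+\bar{\mu})$), and then use the fact that the four summands shift bidegree by pairwise distinct amounts, so that on a pure $(p,q)$-form the vanishing of $d^{\La}$ (resp.\ $d^{\La_{\ast}}$) is equivalent to the simultaneous vanishing of all components of $d^{\ast}$ (resp.\ $d$). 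The paper argues instead with the star expression $d^{\La}=-\ast J^{-1}dJ\ast$: since $J$ acts on a pure $(p,q)$-form as the unit scalar $(\sqrt{-1})^{q-p}$ and $\ast$, $J$ are pointwise isometries, one obtains the pointwise norm identities $|d^{\La}\a_{p,q}|=|d^{\ast}\a_{p,q}|$ and $|d^{\La_{\ast}}\a_{p,q}|=|d\a_{p,q}|$, from which the statement is immediate. Both routes in fact prove the stronger separate equalities $\ker d^{\La}\cap\Om^{p,q}(X)=\ker d^{\ast}\cap\Om^{p,q}(X)$ and $\ker d^{\La_{\ast}}\cap\Om^{p,q}(X)=\ker d\cap\Om^{p,q}(X)$, and neither needs integrability or $L^{2}$ hypotheses. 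The trade-off is that the paper's argument is shorter and uses only the elementary scalar action of $J$ on pure types together with the quoted star formula, while yours leans on the substantially deeper almost K\"{a}hler commutation relations but yields explicit component formulas for $d^{\La}$ and $d^{\La_{\ast}}$ that make the matching with $d^{\ast}$ and $d$ completely transparent. The reconciliation you flag at the end --- that the bracket $[d,\La]$ agrees with the star expression --- is precisely the content of the Tseng--Yau lemma the paper cites when introducing $d^{\La}$, so invoking it is legitimate; and since kernels are insensitive to nonzero scalar factors, the sign ambiguity you leave open there is harmless.
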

	\begin{proof}
		Noting that $J^{2}=(-1)^{k}$ acting on any $k$-forms with $k=p+q$. We then have 
		$$d\star J^{-1}\a_{p,q}=d\star(-1)^{k}J\a_{p,q}=(-1)^{k}(\sqrt{-1})^{q-p}d\star\a_{p,q},$$
		and
		$$dJ\a_{p,q}=(\sqrt{-1})^{q-p}d\a_{p,q}.$$
		Therefore,
		\begin{equation*}
		|d^{\La}\a_{p,q}|=|J^{-1}d\star J^{-1}\a_{p,q}|=|d\star J^{-1}\a_{p,q}|=|d\star\a_{p,q}|=|d^{\ast}\a_{p,q}|,
		\end{equation*}  
		and
		$$|d^{\La_{\ast}}\a_{p,q}|=|J^{-1}dJ\a_{p,q}|=|d\a_{p,q}|.$$
	Thus $d^{\La}\a_{p,q}=d^{\La_{\ast}}\a_{p,q}=0$ if only if $d^{\ast}\a_{p,q}=d\a_{p,q}=0$. 
	\end{proof}
	
	\begin{proposition}\label{P1}(\cite[Lemma 2.2]{CW20})
		If $\a_{p,q}\in\mathcal{H}^{p,q}_{(2);d}(X)$, then $L(\a_{p,q})=\w\wedge\a_{p,q}\in\mathcal{H}^{p+1,q+1}_{(2);d}(X)$.
	\end{proposition}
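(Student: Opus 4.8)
The plan is to show that $L\alpha_{p,q}=\w\wedge\alpha_{p,q}$ is simultaneously $L^{2}$, of pure bidegree $(p+1,q+1)$, $d$-closed, and $d^{\ast}$-closed; since a form that is both closed and coclosed is automatically $\De_{d}$-harmonic, this places $L\alpha_{p,q}$ in $\mathcal{H}^{p+1,q+1}_{(2);d}(X)$. The reduction to ``closed and coclosed'' is exactly what makes the non-compact case tractable: by Lemma \ref{L1} the hypothesis $\alpha_{p,q}\in\mathcal{H}^{p,q}_{(2);d}(X)$ already gives $d\alpha_{p,q}=0$ and $d^{\ast}\alpha_{p,q}=0$, so I never need to integrate $\De_{d}$ by parts again and can argue purely at the level of the operators $d$, $d^{\ast}$, $\ast$, $J$, $L$, $\La$.

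The easy steps come first. Membership in $\Om^{p+1,q+1}_{(2)}(X)$ is immediate: since $\w\in\Om^{1,1}$, the form $L\alpha_{p,q}$ has pure bidegree $(p+1,q+1)$, and because $|\w|$ is pointwise constant the operator $L$ is bounded on $L^{2}$, so $\|L\alpha_{p,q}\|\leq C\|\alpha_{p,q}\|<\infty$. Closedness is equally quick: $X$ is almost K\"ahler, so $d\w=0$, whence $d(L\alpha_{p,q})=d\w\wedge\alpha_{p,q}+\w\wedge d\alpha_{p,q}=0$; that is, $L$ commutes with $d$.

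The main obstacle is coclosedness, $d^{\ast}(L\alpha_{p,q})=0$, because $L$ does not commute with $d^{\ast}$, and—unlike the K\"ahler case—$\De_{d}$ need not commute with $L$ on a genuinely almost K\"ahler manifold (the almost K\"ahler identities carry correction terms coming from $\mu,\bar{\mu}$), so one cannot simply invoke $[\De_{d},L]=0$. I would instead transport the problem through the Hodge star. Using $d^{\ast}=-\ast d\ast$ in dimension $2n$ together with the identity $\ast L=\La\ast$ (a direct consequence of $\La=(-1)^{k}\ast L\ast$ and $\ast\ast=(-1)^{k}$ on $k$-forms), one gets $d^{\ast}(L\alpha_{p,q})=-\ast\,d(\La\ast\alpha_{p,q})$, so it suffices to prove $d(\La\ast\alpha_{p,q})=0$. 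Writing $\beta=\ast\alpha_{p,q}$, note that $\beta$ is closed, since $d\beta=0$ is just $d^{\ast}\alpha_{p,q}=0$ rewritten; hence by the purely algebraic identity $d(\La\beta)=(d\La-\La d)\beta+\La(d\beta)$ we obtain $d(\La\beta)=(d\La-\La d)\beta=d^{\La}\beta$. Finally I would evaluate $d^{\La}\beta$ by the Tseng--Yau formula $d^{\La}=-\ast J^{-1}dJ\ast$ recalled above: applying it to $\beta=\ast\alpha_{p,q}$ and using $\ast\ast\alpha_{p,q}=(-1)^{p+q}\alpha_{p,q}$ gives $d^{\La}(\ast\alpha_{p,q})=-(-1)^{p+q}\ast J^{-1}\,dJ\,\alpha_{p,q}$, and because $\alpha_{p,q}$ has pure type the form $J\alpha_{p,q}=\sqrt{-1}^{\,q-p}\alpha_{p,q}$ is a constant multiple of $\alpha_{p,q}$, so $dJ\alpha_{p,q}=\sqrt{-1}^{\,q-p}d\alpha_{p,q}=0$. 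This yields $d^{\La}\beta=0$, hence $d(\La\ast\alpha_{p,q})=0$ and therefore $d^{\ast}(L\alpha_{p,q})=0$, completing the argument. The only care needed is the bookkeeping of signs and of $\ast\ast$ on the relevant degrees, but since each intermediate quantity is shown to vanish, none of the signs affect the conclusion.
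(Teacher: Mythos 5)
Your proof is correct, but it takes a genuinely different route from the paper's. The paper offers no argument of its own for Proposition \ref{P1}: it is quoted from \cite[Lemma 2.2]{CW20}, whose proof combines the equivalence $\ker d\cap\ker d^{\ast}\cap\Om^{p,q}=\ker d^{\La}\cap\ker d^{\La_{\ast}}\cap\Om^{p,q}$ --- the lemma the paper records just before the proposition --- with the symplectic commutation relation $[d^{\La},L]=d$: since $d\a=0$ and $d^{\La}\a=0$, one gets $d(L\a)=0$ and $d^{\La}(L\a)=Ld^{\La}\a+d\a=0$, and then converts back to $d^{\ast}$-closedness using the pure bidegree of $L\a$. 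You avoid $[d^{\La},L]=d$ altogether: after the same reduction to ``closed and coclosed'' (correctly justified in the non-compact $L^{2}$ setting by Lemma \ref{L1}, with the trivial converse needing no integration by parts), you transport $d^{\ast}(L\a)$ through the Hodge star via $\ast L=\La\ast$ and collapse everything onto $d\a=0$, $d^{\ast}\a=0$ using the Tseng--Yau star formula $d^{\La}=-\ast J^{-1}dJ\ast$ and the scalar action of $J$ on pure-bidegree forms --- which is the very mechanism by which the paper proves its preceding lemma. The net effect is a self-contained computation given the identities the paper has already recorded, whereas the cited Cirici--Wilson argument isolates the symplectic identity $[d^{\La},L]=d$, which is what feeds into the full $\mathfrak{sl}(2,\C)$-representation of Theorem \ref{T4}. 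One point you should make explicit in a final write-up: your identification of $(d\La-\La d)\b$ with $-\ast J^{-1}dJ\ast\b$ is exactly Tseng--Yau's Lemma 2.9 for compatible (possibly non-integrable) triples; it is not a formal consequence of the definitions, but since it is precisely the identity the paper itself quotes, you are entitled to it, and all your remaining steps ($\ast L=\La\ast$, $\ast\ast=(-1)^{k}$, $J\a_{p,q}=\sqrt{-1}^{\,q-p}\a_{p,q}$, boundedness of $L$ on $L^{2}$) check out.
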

	\begin{proof}
		Following Lemma \ref{L1}, we have $d\a_{p,q}=0$ and $d^{\ast}\a_{p,q}=0$. By Lemma \ref{L6}, we then have $d^{\La\ast}\a_{p,q}=0$ and $d^{\La}\a_{p,q}=0$. Using the identities $[d^{\ast},L]=-d^{\La\ast}$ and $d\w=0$, we get $d^{\ast}(\w\wedge\a_{p,q})=0$ and $d(\w\wedge\a_{p,q})=0$.
	\end{proof}

	\subsection{Dolbeault harmonic $(p,q)$-forms}
	Denote by
	$$\mathcal{H}^{p,q}_{(2);\bullet}(X):=\{\a\in\Om^{p,q}_{(2)}(X):\De_{\bullet}\a=0 \} $$
	the space of $L^{2}$ $\De_{\bullet}$-harmonic forms of bidegree $(p,q)$, where $\bullet=\pa,\bar{\pa}$.
	\begin{lemma}(cf. \cite[Lemma3.3]{Hua22})\label{L5}
		If an $L^{2}$ $(p, q)$-form $\a$ on  a complete Hermitian manifold $X$ is $\De_{\pa}$- (resp. $\De_{\bar{\pa}}$-) harmonic form, then $\pa\a=0,\pa^{\ast}\a=0$ (resp. $\bar{\pa}\a=0,\bar{\pa}^{\ast}\a=0$).	
	\end{lemma}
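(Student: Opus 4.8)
The statement is an instance of Gaffney's theorem: on a complete manifold, an $L^{2}$ form killed by a Laplacian is killed by each of its first-order constituents. The almost K\"{a}hler structure plays no essential role here; all that is used is completeness of $(X,g)$ together with the fact that $\pa^{\ast}$ is the formal adjoint of $\pa$ (and likewise $\bar{\pa}^{\ast}$ of $\bar{\pa}$). I would treat the $\De_{\pa}$ case, the $\De_{\bar{\pa}}$ case being verbatim with $\pa$ replaced by $\bar{\pa}$. First I would record that $\a$ is smooth: since $\De_{\pa}$ is a second-order elliptic operator and $\De_{\pa}\a=0$, elliptic regularity gives $\a\in C^{\infty}$, so $\pa\a$ and $\pa^{\ast}\a$ are smooth and locally square-integrable. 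On a compact manifold one would simply integrate by parts, $0=(\De_{\pa}\a,\a)=\|\pa\a\|^{2}+\|\pa^{\ast}\a\|^{2}$, and be done; the entire difficulty on the complete, non-compact $X$ is to legitimize this identity, since $\pa\a$ and $\pa^{\ast}\a$ are not yet known to be $L^{2}$ and boundary contributions must be controlled.

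The plan is the standard cutoff argument. Using completeness, fix $x_{0}\in X$ and choose smooth functions $\phi_{k}$ with $0\le\phi_{k}\le1$, $\phi_{k}\equiv1$ on the geodesic ball $B(x_{0},k)$, $\mathrm{supp}\,\phi_{k}\subset B(x_{0},2k)$, and $|d\phi_{k}|\le 2/k$. Pairing $\De_{\pa}\a=0$ against the compactly supported test form $\phi_{k}^{2}\a$ and integrating by parts (valid because $\phi_{k}^{2}\a$ has compact support and all forms involved are smooth) gives
$$0=(\De_{\pa}\a,\phi_{k}^{2}\a)=(\pa^{\ast}\a,\pa^{\ast}(\phi_{k}^{2}\a))+(\pa\a,\pa(\phi_{k}^{2}\a)).$$
I would then expand by the Leibniz rule $\pa(\phi_{k}^{2}\a)=\phi_{k}^{2}\pa\a+\pa(\phi_{k}^{2})\wedge\a$ and its adjoint analogue $\pa^{\ast}(\phi_{k}^{2}\a)=\phi_{k}^{2}\pa^{\ast}\a+E_{k}$, where $E_{k}$ is a zeroth-order term in $\a$ satisfying $|E_{k}|\le C\,|\pa(\phi_{k}^{2})|\,|\a|$. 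Since $\phi_{k}^{2}$ is a function, $|\pa(\phi_{k}^{2})|\le|d(\phi_{k}^{2})|=2\phi_{k}|d\phi_{k}|$. Grouping the principal terms and isolating the gradient-of-cutoff contributions yields
$$\|\phi_{k}\pa\a\|^{2}+\|\phi_{k}\pa^{\ast}\a\|^{2}=-(\pa\a,\pa(\phi_{k}^{2})\wedge\a)-(\pa^{\ast}\a,E_{k}).$$

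Finally I would estimate and pass to the limit. Each right-hand term is pointwise bounded by $2\phi_{k}|d\phi_{k}|\,|\pa\a|\,|\a|$ (resp. with $\pa^{\ast}\a$), so a Peter--Paul Cauchy--Schwarz absorbs $\tfrac12\big(\|\phi_{k}\pa\a\|^{2}+\|\phi_{k}\pa^{\ast}\a\|^{2}\big)$ into the left side and leaves a remainder bounded by $C\int_{X}|d\phi_{k}|^{2}|\a|^{2}\le C\,(4/k^{2})\|\a\|^{2}$, which tends to $0$ as $k\to\infty$ precisely because $\a\in L^{2}$. Hence $\|\phi_{k}\pa\a\|^{2}+\|\phi_{k}\pa^{\ast}\a\|^{2}\to0$, and since $\phi_{k}\nearrow1$ pointwise, monotone convergence forces $\|\pa\a\|^{2}+\|\pa^{\ast}\a\|^{2}=0$. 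Therefore $\pa\a=0$ and $\pa^{\ast}\a=0$ (this also shows, a posteriori, that both lie in $L^{2}$), and the same computation with $\bar{\pa}$ gives the parenthetical claim. The one genuine obstacle is exactly this limiting step---controlling the cross terms coming from $d\phi_{k}$---and it is here that completeness is indispensable, through the existence of cutoffs with $|d\phi_{k}|\to0$; everything else reduces to the compact-case integration by parts.
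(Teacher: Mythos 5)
The paper does not actually prove this lemma: it is quoted from \cite[Lemma 3.3]{Hua22}, and the cited proof is precisely the Gaffney-type argument you give (completeness $\Rightarrow$ cutoffs $\phi_k$ with $|d\phi_k|\to 0$, integration by parts against $\phi_k^2\a$, Peter--Paul absorption, limit). Your proposal is correct and matches that standard approach; the only cosmetic point is that the final limit is cleaner via Fatou's lemma than ``monotone convergence,'' since the cutoffs need not be pointwise monotone unless you arrange them to be.
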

	Following Lemmas \ref{L1} and \ref{L5}, we get
	\begin{corollary}\label{C4}
		If an $L^{2}$ $(p,q)$-form $\a$ on $X$ is $\De_{d}$-harmonic form, then $d\a=0,d^{\ast}\a=0$, i.e. $\pa\a=\bar{\pa}\a=0$, $\pa^{\ast}\a=\bar{\pa}^{\ast}\a=0$ and $\mu\a=\bar{\mu}\a=0$, $\mu^{\ast}\a=\bar{\mu}^{\ast}\a=0$. In particular, $$\mathcal{H}^{p,q}_{(2);d}(X)\subset\mathcal{H}^{p,q}_{(2);\pa}(X)\cap\mathcal{H}^{p,q}_{(2);\bar{\pa}}(X).$$
	\end{corollary}
This corollary shows that $\De_{d}$-harmonic forms are automatically harmonic with respect to both $\pa$ and $\bar{\pa}$, and also annihilate the operators $\mu$ and $\bar{\mu}$. Hence, the space $\mathcal{H}^{d}_{(2);d}(X)$ sits naturally inside the intersection $\ker{\De_{\bar{\pa}}}\cap\ker\De_{\pa}$, which is the main object of study in the following sections.
	

	\begin{proof}[\textbf{Proof of Theorem \ref{T4}}]
	The first duality follows from the identity
	$$\ker(\De_{\pa})\cap\Om^{p,q}=\ker(\De_{\bar{\pa}})\cap\Om^{q,p}
	\Rightarrow\ker(\De_{\pa}+\De_{\bar{\pa}})\cap\Om^{p,q}=\ker(\De_{\pa}+\De_{\bar{\pa}})\cap\Om^{q,p}.$$

	 Hodge duality follows from this same identity together with the relation \[\star(\De_{\bar{\pa}}+\De_{\pa})=(\De_{\bar{\pa}}+\De_{\pa})\star,\]
	 which also proves the Serre duality.	
		
	By Corollary \ref{C6}, we obtain
	$$[L,\De_{\pa}+\De_{\bar{\pa}}]=0,\quad [\La,\De_{\pa}+\De_{\bar{\pa}}]=0,$$
	so both $L$ and $\La$ preserve $\ker{\De_{\pa}}\cap\ker{\De_{\bar{\pa}}}$. Consequently, the maps
	$$L^{n-k}:\ker{\De_{\pa}}\cap\ker{\De_{\bar{\pa}}}\cap\Om^{p,k-p}_{(2)}(X)\stackrel{\cong}
	{\longrightarrow}
	\ker{\De_{\pa}}\cap\ker{\De_{\bar{\pa}}}\cap\Om^{p+n-k,n-p}_{(2)}(X)$$
	are isomorphisms. 
	
	For any $\a^{p,q}\in\ker{\De_{\pa}}\cap\ker{\De_{\bar{\pa}}}\cap\Om^{p,q}_{(2)}(X)$, we can write
	\[\a^{p,q}=\bigoplus_{j\geq\min\{p+q-n,0\} }L^{j}\b^{p-j,q-j},\]
	where $\b^{p-j,q-j}\in\ker\La\cap\Om_{(2)}^{p-j,q-j}$. Therefore,
\[ 0=(\De_{\bar{\pa}}+\De_{\pa})\a= \bigoplus_{0\leq j\leq\min\{p,q\} }L^{j}(\De_{\bar{\pa}}+\De_{\pa})\b^{p-j,q-j},\]
and	
	\[\La(\De_{\bar{\pa}}+\De_{\pa})\b^{p-j,q-j}=(\De_{\bar{\pa}}+\De_{\pa})\La\b^{p-j,q-j}=0\Rightarrow(\De_{\bar{\pa}}+\De_{\pa})\b^{p-j,q-j}\in\ker\La.\]
Hence \[(\De_{\bar{\pa}}+\De_{\pa})\b^{p-j,q-j}=0.\]
This implies that
$$\ker{\De_{\pa}}\cap\ker{\De_{\bar{\pa}}}\cap\Om^{p,q}_{(2)}(X)=\bigoplus_{j\geq\max\{p+q-n,0\}}L^{j}(\ker{\De_{\pa}}\cap\ker{\De_{\bar{\pa}}}\cap P^{p-j,q-j}_{(2)}(X)).$$
	\end{proof}
	\begin{proposition}\label{P3}
		If $\a_{p,q}\in\ker{\De_{\bar{\pa}}}\cap\ker{\De_{\pa}}\cap\Om^{p,q}_{(2)}(X)$, then $L(\a_{p,q})=\w\wedge\a_{p,q}\in\ker{\De_{\bar{\pa}}}\cap\ker{\De_{\pa}}\cap\Om^{p+1,q+1}_{(2)}(X)$.
	\end{proposition}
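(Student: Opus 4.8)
The plan is to reduce everything to the single algebraic fact already extracted in the proof of Theorem \ref{T4}, namely that $L$ commutes with the combined Laplacian $\De_{\pa}+\De_{\bar{\pa}}$, and then to recover the splitting of $\ker(\De_{\pa}+\De_{\bar{\pa}})$ into $\ker{\De_{\pa}}\cap\ker{\De_{\bar{\pa}}}$ by a positivity argument. First I would check that $L$ genuinely preserves $L^{2}$. On an almost K\"{a}hler manifold the fundamental form $\w$ has constant pointwise norm (equal to $\sqrt{n}$ in an orthonormal coframe), so wedging with $\w$ is a bounded bundle endomorphism: there is a constant $C(n)$ with $|\w\wedge\a|\leq C(n)|\a|$ pointwise. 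Hence $\|L\a\|\leq C(n)\|\a\|<\infty$, which already gives $L\a\in\Om^{p+1,q+1}_{(2)}(X)$.

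Next, since $\a\in\ker{\De_{\pa}}\cap\ker{\De_{\bar{\pa}}}$, in particular $(\De_{\pa}+\De_{\bar{\pa}})\a=0$. Invoking $[L,\De_{\pa}+\De_{\bar{\pa}}]=0$ (from \cite[Corollary 3.5]{CW20}, exactly as used in the proof of Theorem \ref{T4}), I obtain
$$(\De_{\pa}+\De_{\bar{\pa}})(L\a)=L(\De_{\pa}+\De_{\bar{\pa}})\a=0,$$
so that $L\a$ is a smooth $L^{2}$-form annihilated by the non-negative self-adjoint operator $\De_{\pa}+\De_{\bar{\pa}}$. It then remains to pass from $\ker(\De_{\pa}+\De_{\bar{\pa}})$ back to the intersection of the two kernels. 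Pairing with $L\a$ and integrating by parts,
$$0=\big((\De_{\pa}+\De_{\bar{\pa}})L\a,\,L\a\big)=\|\pa L\a\|^{2}+\|\pa^{\ast}L\a\|^{2}+\|\bar{\pa}L\a\|^{2}+\|\bar{\pa}^{\ast}L\a\|^{2},$$
which forces each term to vanish and hence $\De_{\pa}L\a=\De_{\bar{\pa}}L\a=0$, completing the argument.

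The main obstacle is justifying that last integration by parts in the non-compact complete setting: one must know that $L\a$ lies in the domain on which the identity $(\De_{\bullet}\beta,\beta)=\|\bullet\beta\|^{2}+\|\bullet^{\ast}\beta\|^{2}$ holds with no boundary term. Completeness is exactly what makes this work, via the Gaffney/Andreotti--Vesentini cutoff technique — the same mechanism underlying Lemma \ref{L1} — applied to the smooth $L^{2}$-form $L\a$ together with the fact that $(\De_{\pa}+\De_{\bar{\pa}})L\a=0$ is itself in $L^{2}$. Equivalently, one can simply cite the identification $\ker(\De_{\pa}+\De_{\bar{\pa}})\cap\Om^{p,q}_{(2)}(X)=\ker{\De_{\pa}}\cap\ker{\De_{\bar{\pa}}}\cap\Om^{p,q}_{(2)}(X)$ already recorded in the Introduction, which packages this splitting; granting it, the boundedness of $L$ and the commutation relation deliver the conclusion immediately.
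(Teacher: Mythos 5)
Your proposal is correct and takes essentially the same route as the paper: the paper proves Proposition \ref{P3} by simply invoking Theorem \ref{T4}, whose own proof rests on exactly the commutation relation $[L,\De_{\pa}+\De_{\bar{\pa}}]=0$ from \cite[Corollary 3.5]{CW20} that you use. Your write-up just makes explicit two points the paper leaves implicit, namely the $L^{2}$-boundedness of $L$ (from the pointwise boundedness of $\w$) and the Gaffney-type cutoff argument on a complete manifold that identifies $\ker(\De_{\pa}+\De_{\bar{\pa}})\cap\Om^{p,q}_{(2)}(X)$ with $\ker\De_{\pa}\cap\ker\De_{\bar{\pa}}\cap\Om^{p,q}_{(2)}(X)$.
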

	\begin{proof}
		It follows from Theorem \ref{T4}.
	\end{proof}

	\section{Vanishing theorem for $L^{2}$-harmonic forms}

	\subsection{Vanishingly exhaustible $k$-form}

	Let $(X,g)$ be a complete Riemannian manifold. Recall that a function $E:X\rightarrow\mathbb{R}$ is called an \textit{exhaustion function} if the sublevel sets
	$$X_{k}=\{x\in X:E(x)<k \}$$
	are relatively compact in $X$ for any $k\in\mathbb{R}$ (cf. \cite{McN03}). In this article, we only consider $C^{1}$ exhaustion function as follows.
	\begin{definition}(\cite[Definition 1]{McN03})
		Let $f:\mathbb{R}\rightarrow\mathbb{R}^{+}$ be continuous and $E$ be a $C^{1}$ exhaustion function. We say that a $k$-form $\theta$ on $X$ is $f(E)$-bounded, if
		$$|\theta(x)|\leq f(E(x)),\ for\ all\ x\in X.$$
	\end{definition}
	Note that the distance function $\rho$ associated to the metric $g$ on a complete manifold $X$ has the property that its differential $d\rho$ is $f(E)$-bounded for $f\equiv\textit{constant}$ and for any exhaustion function $E$. Following the idea of McNeal in \cite{McN03}, we consider some smooth differential forms as following.
	\begin{definition}
	The smooth  $k$-form $\w$, ($k\geq1$), on a complete Riemannian manifold $X$ is vanishingly exhaustible if there exist 
	 $C^{1}$ exhaustion functions $E$ on $X$, continuous, nondecreasing functions $f,g:\mathbb{R}\rightarrow\mathbb{R}^{+}$, 
	 and $C^{1}$ $(k-1)$-forms $\theta$,  on $X$ such that:\\
	 (1) $\w$ is bounded;\\
	 (2) $\w=d\theta$;\\
	(3) $\theta$ is $f(E)$-bounded, $dE$ is $g(E)$-bounded;\\
	(4) the series $$\sum_{k=N}^{\infty}\frac{1}{f(k)g(k)}$$ diverges.
	\end{definition}

	Returning to the almost Kähler setting, we extend the concept of a vanishingly exhaustible form by allowing a finite linear combination of such forms to approximate the Kähler form $\w$ in the $L^{\infty}$-norm. The constant $c$ quantifies the quality of the approximation; smaller values of $c$ correspond to better approximations.
	\begin{definition}\label{D1}
		The complete almost K\"{a}hler manifold $(X,J,\w)$ is $(L,c)$-vanishingly exhaustible if there exist a sequence of vanishing  exhaustible $2$-forms, $\{\w_{1},\cdots,\w_{L}\}$ and a uniform positive constant $c$ such that
		\begin{equation*}
		\sup|\w-\sum_{i=1}^{L}\w_{i}^{1,1}|\leq c,
		\end{equation*}
		where $\w_{i}^{1,1}$ is the $(1,1)$-part of $\w_{i}$.
	\end{definition}
	\begin{example}
		(1)	Let $(X,g)$ be a simply-connected $n$-dimensional complete Riemannian manifold with sectional curvature bounded from above by a negative constant, i.e.
		$$\sec\leq-K$$
		for some $K>0$. We have the following classical fact pointed by Gromov \cite{Gro} (one also can see \cite[Proposition 8.4]{Bal} and \cite[Lemma 3.2]{CY}).
		
		For any bounded and closed $k$-form $\w$ on $X$, where $k>1$, there exists a bounded $(k-1)$-form $\theta$ on $X$ such that $\w=d\theta$ and 
		$$\|\theta\|_{L^{\infty}(X)}\leq K^{-\frac{1}{2}}\|\w\|_{L^{\infty}(X)}.$$
		Following above statement, we only need take $f(x)=g(x)=E(x)=1$.

		(2) The author in \cite{McN02} considered a complete K\"{a}hler manifold $(X,\w)$ which given by a global potential, i.e.,
		 $$\w=\sqrt{-1}\pa\bar{\pa} f=\frac{1}{2}dJd f$$
		for some smooth function $f$. The hypotheses of the function $f$ on Theorem 2.6 of \cite{McN02} follows from the Definition \ref{D1} 
		by taking $f(x)=g(x)=\sqrt{A+Bx}$ and $E(x)=f(x)$. The hypotheses of \cite[Theorem 2]{CF} similarly follows from Definition \ref{D1} by taking $f(x)=c(1+x)$, $g(x)=1$, and $E(x)=\rho(x,x_{0})$ where $\rho(x,x_{0})$ denotes the Riemannian distance between $x$ and a fixed base point $x_{0}\in X$. 
		
		(3) Let $(X,g)$ be a complete manifold of finite volume with pinched negative sectional curvature, i.e.,
		 there are positive constants $a,b$ such that $-b^{2}\leq \sec\leq -a^{2}$. 
		 We now  recall some standard facts about the topology and geometry of this manifold (see \cite{Ebe,HI,Yeg}). 
		 At First, $X$ has a finite number of ends, it means that $X=X_{0}\cup E_{i}$, 
		 where $X_{0}$ is a compact manifold with boundary $\pa X_{0}$, and the boundaries of $E_{i}$, $\pa E_{i}$, 
		 are the components of $\pa X_{0}$. For each end $E_{i}$, it is $C^{2}$-diffeomorphic to $\mathbb{R}\times\pa E_{i}$, 
		 and the metric on $E_{i}$ is as follows:
        $$
		g=d^{2}\rho_{i}+h_{\rho_{i}},
		$$
		where $\rho_{i}$ is the Busemann function, $h_{\rho_{i}}$ is a family of metrics on the compact manifold $\pa E_{i}$, and satisfies
		$e^{-b\rho_{i}}h_{0}\leq h_{\rho_{i}}\leq e^{-a\rho_{i}}h_{0}$.
		
		On a complete almost K\"{a}hler manifold of finite volume and with pinched negative sectional curvature,
		 the K\"{a}hler form $\w$ cannot be $d$(bounded). In fact, for any bounded nonzero harmonic smooth $k$-form $\a$, $k\geq1$,
		  on a complete Riemannian manifold of finite volume, $\a$ cannot be $d$(bounded). 
		  If not, there exists a $(k-1)$-form $\b$ such that $\a=d\b$ and $\b$ is bounded, then $d^{\ast}d\b=0$. 
		  As the volume is finite, one can see that $\b\in L^{2}$. Therefore, 
		  $$0=(d^{\ast}d\b,\b)=\|d\b\|_{L^2}^{2},$$ 
		  i.e., $\a=d\b=0$. 
		  However, we have the following result. 

		\begin{proposition}(cf. \cite[Lemma 3.2]{Yeg})
			Let $(X,J,\w)$ be a complete almost K\"{a}hler manifold of finite volume and with pinched negative curvature $-b^{2}\leq \sec\leq -a^{2}<0$. 
			Then outside a compact subset, its K\"{a}hler form is $d$(bounded). 
			More specifically, there exist a bounded open subset $D\subset X$ and a bounded and continuous $1$-form $\theta$ 
			such that we have  $\w=d\theta$ in the weak sense on $X\backslash D$. 
		\end{proposition}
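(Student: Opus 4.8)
The plan is to localize to the ends of $X$ and to build the primitive on each end by integrating $\w$ along the Busemann flow, controlling it via the contraction of stable Jacobi fields. Since $X$ has finite volume and pinched negative curvature, the structure recalled just above gives $X=X_{0}\cup\bigcup_{i}E_{i}$ with $X_{0}$ compact and each end $E_{i}\cong\R\times\pa E_{i}$ carrying the warped metric $g=d\rho_{i}^{2}+h_{\rho_{i}}$ whose cross sections shrink, $e^{-b\rho_{i}}h_{0}\le h_{\rho_{i}}\le e^{-a\rho_{i}}h_{0}$. First I would take $D$ to be a relatively compact open neighborhood of $X_{0}$, so that $X\setminus D$ is a disjoint union of the truncated cusps $E_{i}$, and construct $\theta$ separately on each $E_{i}$; the finitely many pieces then assemble to a single bounded $1$-form on $X\setminus D$.

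On a fixed end write $\rho=\rho_{i}$ for the Busemann function and $\xi=\na\rho$ for its gradient, a unit vector field whose flow $\psi_{s}$ is the unit-speed geodesic flow pushing into the cusp (the horospheres $\{\rho=\text{const}\}$ shrinking as $s\to+\infty$). Since $\w$ is closed, Cartan's formula gives $\frac{d}{ds}\psi_{s}^{\ast}\w=\psi_{s}^{\ast}\mathcal{L}_{\xi}\w=d\bigl(\psi_{s}^{\ast}\iota_{\xi}\w\bigr)$. Integrating in $s$ and using that $\psi_{s}^{\ast}\w\to0$ (checked below) leads to the choice
\begin{equation*}
\theta:=-\int_{0}^{\infty}\psi_{s}^{\ast}(\iota_{\xi}\w)\,ds ,
\end{equation*}
which formally satisfies $d\theta=\w-\lim_{s\to\infty}\psi_{s}^{\ast}\w=\w$ on the end.

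The crux is boundedness of $\theta$, and here the upper bound $sec\le -a^{2}$ enters through a Rauch/Heintze--Im Hof comparison: for $v^{\perp}$ tangent to a horosphere (i.e.\ $g(v^{\perp},\xi)=0$), the push-forward $d\psi_{s}(v^{\perp})$ is a stable Jacobi field, whence $|d\psi_{s}(v^{\perp})|\le e^{-as}|v^{\perp}|$ uniformly in the base point. Since $\iota_{\xi}\w$ annihilates $\xi$ and $d\psi_{s}(\xi)=\xi$, for any unit $v=v^{\perp}+v^{\xi}$ we get $|(\psi_{s}^{\ast}\iota_{\xi}\w)(v)|=|\w(\xi,d\psi_{s}v^{\perp})|\le\|\w\|_{L^{\infty}(X)}\,e^{-as}$, so the integral converges and $|\theta|\le a^{-1}\|\w\|_{L^{\infty}(X)}$. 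The same contraction gives $|\psi_{s}^{\ast}\w|\le\|\w\|_{L^{\infty}(X)}e^{-as}\to0$, which justifies the vanishing of the boundary term and hence $d\theta=\w$.

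The main obstacle is geometric rather than formal, and it is twofold. First, one must secure the uniform contraction $|d\psi_{s}(v^{\perp})|\le e^{-as}|v^{\perp}|$; the crude two-sided bound $e^{-b\rho}h_{0}\le h_{\rho}\le e^{-a\rho}h_{0}$ is \emph{not} sufficient if inserted directly, since comparing times $\rho$ and $\rho+s$ through the fixed reference $h_{0}$ produces a spurious factor $e^{(b-a)\rho/2}$ that is unbounded under genuine pinching; instead one uses the infinitesimal comparison along the flow, valid uniformly in $\rho$ because it comes from the Riccati inequality for the stable horospherical second fundamental form. Second, Busemann functions are in general only $C^{1,1}$ (they are $C^{2}$ in the pinched setting, by Heintze--Im Hof \cite{HI}), so $\xi$, and therefore $\theta$, need only be continuous; consequently the identity $\w=d\theta$ must be read in the weak (distributional) sense, exactly as the statement asserts. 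Finally I would note that this is precisely \cite[Lemma 3.2]{Yeg} transported to the almost K\"{a}hler setting, $\w$ being the bounded closed fundamental form.
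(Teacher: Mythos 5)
Your proposal is correct and takes essentially the same route as the paper: the paper's "proof" is just the citation of Yeganefar's Lemma 3.2, and your construction — integrating $\iota_{\xi}\w$ along the Busemann flow into each cusp end and bounding the primitive via the stable Jacobi field contraction $|d\psi_{s}(v^{\perp})|\leq e^{-as}|v^{\perp}|$ — is precisely that argument transported to the almost K\"{a}hler setting. Your remark that the crude two-sided bound $e^{-b\rho}h_{0}\leq h_{\rho}\leq e^{-a\rho}h_{0}$ cannot be used directly and must be replaced by the infinitesimal Riccati/second-fundamental-form comparison of Heintze--Im Hof is exactly the point on which the cited proof rests.
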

	\begin{proof}
	The proof is the same as \cite[Lemma 3.2]{Yeg}.
\end{proof}
	\end{example}

	\subsection{Key lemma}
We now prove a lemma extending Stokes formula to complete manifolds under suitable conditions.
	\begin{lemma}\label{L8}
		Let  $X$ be a complete Riemannian manifold. Suppose that $\w$ is a vanishingly exhaustible $k$-form with $k\geq1$. Then for any $\a\in\Om^{k+l}_{(2)}(X)\cap\ker d^{\ast}$, and $\b\in\Om^{l}_{(2)}(X)\cap\ker d$, we have
		\begin{equation*}
		(\a, \w\wedge \b)=0.
		\end{equation*}
	\end{lemma}
	\begin{proof}
		Let $h:\mathbb{R}\rightarrow\mathbb{R}$ be smooth, $0\leq h\leq1$,
		$$
		h(t)=\left\{
		\begin{aligned}
		1, &  & t\geq 1, \\
		0, &  & t\leq 0
		\end{aligned}
		\right.
		$$
		and consider the compactly supported function
		$$h_{k}(x)=h(k-E(x)),$$
		where $k$ is a positive integer. Note that ${\rm{supp}}h_{k}\subset X_{k}$ and $h_{k}=1$ on $X_{k-1}$.
		
		We denote $\w:=d\theta$. Let $\gamma=\theta\wedge\b$. Since $h_{k}\cdot\gamma$ has compact support, 
		and $d^{\ast}\a=0$, an integration by parts gives
		\begin{equation} \label{E1}
		(\a,d(h_{k}\cdot\gamma))=(d^{\ast}\a,h_{k}\cdot\gamma )=0.
		\end{equation}
		Since $d\b=0$, we also have
		\begin{equation} \label{E2}
		d(h_{k}\cdot\gamma)=-h'(k-E)\cdot dE\wedge\theta\wedge\b+h_{k}\cdot\w\wedge\b.
		\end{equation}
		We now substitute (\ref{E2}) into (\ref{E1}) and consider the two terms coming from the right-hand side of (\ref{E2}) separately. For the first term, the fact that $supp h'_{k}\subset X_{k}\backslash X_{k-1}$ and the fact that $\w$ is bounded imply 
		\begin{equation}\label{E3}
		\begin{split}
		|(\a,h'_k\cdot dE\wedge\theta\wedge\b)|&\leq\int_{ X_{k}\backslash X_{k-1}}|dE\wedge\theta|\cdot|\a|\cdot|\b|\\
		&\leq\int_{ X_{k}\backslash X_{k-1}}(f(E)g(E))\cdot|\a|\cdot|\b|\\
		&\leq (f(k)\cdot g(k))\int_{ X_{k}\backslash X_{k-1}}|\a|\cdot|\b|,\\
		\end{split}
		\end{equation}
		for the functions $f,g$ in Definition \ref{D1}. The second inequality follows from our hypotheses on $E$ and $\theta$. The assumption that $\a,\b\in L^{2}$ implies that $|\a|\cdot|\b|\in L^{1}$, then there exists a subsequence $\{k_{i}\}$ such that
		\begin{equation}\label{E4}
		(f(k_{i})\cdot g(k_{i}))\int_{ X_{k_{i}}\backslash X_{k_{i}-1}}|\a|\cdot|\b|\rightarrow0,\ as\ i\rightarrow\infty.
		\end{equation}
		Otherwise, for some $c>0$, we have
		\begin{equation*}
		\begin{split}
		\int_{X}|\a|\cdot|\b|&=\sum_{k=1}^{\infty}\int_{ X_{k}\backslash X_{k-1}}|\a|\cdot|\b|\\
		&\geq c\sum_{k=1}^{\infty}\frac{1}{f(k)g(k)}
		=\infty,
		\end{split}
		\end{equation*}
		which leads to a contradiction.
		
		So, for the sequence gives by (\ref{E4}), it follows from (\ref{E3}) that
		\begin{equation}\label{E5}
		\lim_{i\rightarrow\infty}(\a,h'_{k_{i}}\cdot dE\wedge\theta\wedge\b )=0.
		\end{equation}
		For the term coming from the second term on the right-hand side of (\ref{E2}), the dominated convergences theorem implies 
		\begin{equation}\label{E6}
		\lim_{k\rightarrow\infty}(\a,h_{k}\cdot\w\wedge\b)=(\a,\w\wedge\b).
		\end{equation}
		Substituting (\ref{E5}) and (\ref{E6}) into (\ref{E1}), it follows that $(\a,\w\wedge\b)=0$. 
	\end{proof}

We observe that exact (1,1)-forms have another expression.
	\begin{proposition}\label{P5}
		If the $(1,1)$-form $\tilde{\w}$ on a complete almost K\"{a}hler manifold $X$ is exact, then there exists a $1$-form $\theta$ such that
		\begin{equation*}
		\tilde{\w}=\pa\theta^{0,1}+\bar{\pa}\theta^{1,0},
		\end{equation*}
		where $\theta^{1,0}$ (resp. $\theta^{0,1}$) is the $(1,0)$ (resp.$(0,1)$) part of $\theta$. 
	\end{proposition}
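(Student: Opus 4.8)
The plan is to produce the desired $\theta$ directly from a primitive of $\tilde{\w}$ and then isolate the correct bidegree. Since $\tilde{\w}$ is a form of pure bidegree $(1,1)$, its total degree is $2$; because $d$ raises total degree by exactly one, any primitive of $\tilde{\w}$ must be a $1$-form. Thus the exactness hypothesis supplies a $1$-form $\theta$ with $\tilde{\w}=d\theta$, and I would write $\theta=\theta^{1,0}+\theta^{0,1}$ for its decomposition into $(1,0)$- and $(0,1)$-parts.

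Next I would expand $d=\mu+\pa+\bar{\pa}+\bar{\mu}$ using the bidegrees $|\mu|=(2,-1)$, $|\pa|=(1,0)$, $|\bar{\pa}|=(0,1)$, $|\bar{\mu}|=(-1,2)$, and apply each piece to $\theta^{1,0}$ and $\theta^{0,1}$. Two terms vanish for purely formal reasons: $\mu\theta^{1,0}$ would land in bidegree $(3,-1)$ and $\bar{\mu}\theta^{0,1}$ in bidegree $(-1,3)$, both impossible. Collecting the remaining six terms by bidegree yields
$$d\theta=(\mu\theta^{0,1}+\pa\theta^{1,0})+(\pa\theta^{0,1}+\bar{\pa}\theta^{1,0})+(\bar{\pa}\theta^{0,1}+\bar{\mu}\theta^{1,0}),$$
where the three grouped summands have pure bidegrees $(2,0)$, $(1,1)$, and $(0,2)$ respectively.

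Finally, I would project the identity $\tilde{\w}=d\theta$ onto bidegree $(1,1)$. Since $\tilde{\w}$ is of pure type $(1,1)$, its $(2,0)$- and $(0,2)$-components vanish while its $(1,1)$-component is $\tilde{\w}$ itself; comparing with the decomposition above gives $\tilde{\w}=\pa\theta^{0,1}+\bar{\pa}\theta^{1,0}$, as claimed. There is no serious obstacle here: the whole argument is bidegree bookkeeping built on the splitting $d=\mu+\pa+\bar{\pa}+\bar{\mu}$, and the only point requiring any care is the observation that a primitive of a $2$-form is necessarily a $1$-form, so that exactly the two ``overflow'' terms $\mu\theta^{1,0}$ and $\bar{\mu}\theta^{0,1}$ drop out and the $(1,1)$-projection is precisely $\pa\theta^{0,1}+\bar{\pa}\theta^{1,0}$.
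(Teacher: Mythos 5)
Your proposal is correct and follows essentially the same route as the paper's own proof: write $\tilde{\w}=d\theta$ for a $1$-form $\theta=\theta^{1,0}+\theta^{0,1}$, expand $d=\mu+\pa+\bar{\pa}+\bar{\mu}$, group the resulting terms by bidegree into $(2,0)$, $(1,1)$, $(0,2)$ components, and extract the $(1,1)$-part. The only cosmetic difference is that you explicitly note the vanishing of the out-of-range terms $\mu\theta^{1,0}$ and $\bar{\mu}\theta^{0,1}$, which the paper leaves implicit.
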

	\begin{proof}
				By the hypothesis, we have $\tilde{\w}=d\theta$. Decomposing $d$ and $\theta$ according to type yields
		\begin{equation*}
		\begin{split}
		d\theta
		&=(\pa+\mu+\bar{\pa}+\bar{\mu})(\theta^{1,0}+\theta^{0,1})\\
		&=(\pa\theta^{1,0}+\mu\theta^{0,1})+(\pa\theta^{0,1}+\bar{\pa}\theta^{1,0})+(\bar{\pa}\theta^{0,1}+\bar{\mu}\theta^{1,0}).\\
		\end{split}
		\end{equation*}
		Since $\tilde{\w}$ is a $(1,1)$-form, the components of types $(2,0)$ and $(0,2)$ must vanish. Hence
		\[\pa\theta^{1,0}+\mu\theta^{0,1}=\bar{\pa}\theta^{0,1}+\bar{\mu}\theta^{1,0}=0\]
		and consequently
		\[\tilde{\w}=\pa\theta^{0,1}+\bar{\pa}\theta^{1,0}. \]
		This completes the proof.
	\end{proof}
	Following Proposition \ref{P5} and Lemma \ref{L8}, we then have
	\begin{lemma}\label{L3}\label{L4}
		Let  $(X,J,\w)$ be a complete almost K\"{a}her manifold. Suppose that $\tilde{\w}$ is a vanishingly exhaustible $(1,1)$-form. For any $\a\in\Om^{p,q}_{(2)}(X)\cap\ker \pa^{\ast}\cap\ker\bar{\pa}^{\ast}$, and $\b\in\Om^{p-1,q-1}_{(2)}(X)\cap\ker\pa\cap\ker\bar{\pa}$, we have
		\begin{equation*}
		(\a, \tilde{\w}\wedge \b)=0.
		\end{equation*}
	\end{lemma}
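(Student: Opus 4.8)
The plan is to reduce the statement to the cut-off integration-by-parts argument of Lemma \ref{L2}, but carried out at the Dolbeault level with $\pa,\bar{\pa}$ in place of $d$. Because $\tilde{\w}$ is a vanishingly exhaustible $(1,1)$-form, its defining data provide an exhaustion function $E$, nondecreasing functions $f,g$ with $\sum_{k}\frac{1}{f(k)g(k)}=\infty$, and a $1$-form $\theta$ with $\tilde{\w}=d\theta$, $|\theta|\leq f(E)$, and $|dE|\leq g(E)$. Since $\tilde{\w}$ is exact, Proposition \ref{P5} applies and refines this to
\[
\tilde{\w}=\pa\theta^{0,1}+\bar{\pa}\theta^{1,0}.
\]
First I would use that $\b$ is both $\pa$- and $\bar{\pa}$-closed to push the differentials outside the wedge products: since $\pa$ and $\bar{\pa}$ are derivations and $\pa\b=\bar{\pa}\b=0$, one has $\pa(\theta^{0,1}\wedge\b)=(\pa\theta^{0,1})\wedge\b$ and $\bar{\pa}(\theta^{1,0}\wedge\b)=(\bar{\pa}\theta^{1,0})\wedge\b$, whence
\[
\tilde{\w}\wedge\b=\pa(\theta^{0,1}\wedge\b)+\bar{\pa}(\theta^{1,0}\wedge\b).
\]
It therefore suffices to prove $(\a,\pa(\theta^{0,1}\wedge\b))=0$ and $(\a,\bar{\pa}(\theta^{1,0}\wedge\b))=0$ separately.

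For the first identity, set $\gamma=\theta^{0,1}\wedge\b\in\Om^{p-1,q}_{(2)}(X)$ and introduce the compactly supported cut-offs $h_{k}(x)=h(k-E(x))$ exactly as in the proof of Lemma \ref{L2}. Integrating by parts against the compactly supported form $h_{k}\gamma$ and using $\pa^{\ast}\a=0$ gives $(\a,\pa(h_{k}\gamma))=(\pa^{\ast}\a,h_{k}\gamma)=0$. Expanding $\pa(h_{k}\gamma)=(\pa h_{k})\wedge\gamma+h_{k}\,(\pa\theta^{0,1})\wedge\b$ and estimating the first term on the support of $\pa h_{k}$, which lies in $X_{k}\backslash X_{k-1}$, I would bound
\[
|(\a,(\pa h_{k})\wedge\gamma)|\leq f(k)g(k)\int_{X_{k}\backslash X_{k-1}}|\a|\,|\b|,
\]
using $|\pa h_{k}|\leq|dE|\leq g(E)$ and $|\theta^{0,1}|\leq|\theta|\leq f(E)$. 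Since $\a,\b\in L^{2}$ the product $|\a||\b|$ lies in $L^{1}$, so the divergence of $\sum_{k}\frac{1}{f(k)g(k)}$ forces a subsequence $k_{i}$ along which the right-hand side tends to $0$, while dominated convergence gives $(\a,h_{k_{i}}(\pa\theta^{0,1})\wedge\b)\to(\a,(\pa\theta^{0,1})\wedge\b)$. Hence $(\a,\pa(\theta^{0,1}\wedge\b))=0$, and the verbatim argument with $\bar{\pa}^{\ast}\a=0$ and $\gamma'=\theta^{1,0}\wedge\b$ gives $(\a,\bar{\pa}(\theta^{1,0}\wedge\b))=0$. Adding the two identities yields $(\a,\tilde{\w}\wedge\b)=0$.

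The main obstacle I anticipate is that one cannot invoke Lemma \ref{L2} as a black box: its hypotheses are phrased in terms of $d$-closedness of $\b$ and $d^{\ast}$-closedness of $\a$, whereas here we only know $\b\in\ker\pa\cap\ker\bar{\pa}$ and $\a\in\ker\pa^{\ast}\cap\ker\bar{\pa}^{\ast}$, and in the non-integrable setting $\ker\pa\cap\ker\bar{\pa}\neq\ker d$ because of the operators $\mu,\bar{\mu}$. This is exactly why Proposition \ref{P5} is needed: it converts the exactness $\tilde{\w}=d\theta$ into the purely $(\pa,\bar{\pa})$-exact expression $\pa\theta^{0,1}+\bar{\pa}\theta^{1,0}$, so that the derivation identities only use $\pa\b=0$ and $\bar{\pa}\b=0$, and the integration by parts only uses $\pa^{\ast}\a=0$ and $\bar{\pa}^{\ast}\a=0$. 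The remaining point to check carefully is that the pointwise bounds on the bidegree components, namely $|\theta^{1,0}|,|\theta^{0,1}|\leq|\theta|$ and $|\pa E|,|\bar{\pa}E|\leq|dE|$, are genuine consequences of the orthogonality of the type decomposition, so that the vanishingly exhaustible estimates transfer unchanged to each Dolbeault piece.
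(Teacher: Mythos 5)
Your overall strategy is the paper's own: invoke Proposition \ref{P5} to write $\tilde{\w}=\pa\theta^{0,1}+\bar{\pa}\theta^{1,0}$, run the cut-off integration by parts of Lemma \ref{L2} using $\pa^{\ast}\a=\bar{\pa}^{\ast}\a=0$ and $\pa\b=\bar{\pa}\b=0$, kill the boundary terms along a subsequence furnished by the divergence of $\sum_{k}1/(f(k)g(k))$, and handle the remaining term by dominated convergence. However, there is one genuine gap in your execution: you split the claim into the two separate identities $(\a,\pa(\theta^{0,1}\wedge\b))=0$ and $(\a,\bar{\pa}(\theta^{1,0}\wedge\b))=0$ and pass to the limit in each one individually. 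The dominated convergence step for, say, $(\a,h_{k_{i}}(\pa\theta^{0,1})\wedge\b)\to(\a,(\pa\theta^{0,1})\wedge\b)$ requires an $L^{1}$ dominating function for $\langle\a,(\pa\theta^{0,1})\wedge\b\rangle$, i.e. effectively a pointwise bound on $\pa\theta^{0,1}$ alone. But the hypothesis of vanishing exhaustibility only gives boundedness of the \emph{sum} $\pa\theta^{0,1}+\bar{\pa}\theta^{1,0}=\tilde{\w}$ (condition (1) of the definition); the two summands individually may blow up at infinity, in which case the pairings $(\a,(\pa\theta^{0,1})\wedge\b)$ and $(\a,(\bar{\pa}\theta^{1,0})\wedge\b)$ need not even be defined, let alone vanish. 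Note that the additional relations coming out of Proposition \ref{P5}, namely $\pa\theta^{1,0}=-\mu\theta^{0,1}$ and $\bar{\pa}\theta^{0,1}=-\bar{\mu}\theta^{1,0}$, control only the $(2,0)$ and $(0,2)$ components and give no control whatsoever on $\pa\theta^{0,1}$ or $\bar{\pa}\theta^{1,0}$ separately.

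The repair is precisely what the paper does: perform both integrations by parts, but add the two identities \emph{before} taking any limits. Then the non-boundary terms assemble into $(\a,h_{k}\cdot\tilde{\w}\wedge\b)$, which is dominated by $\sup|\tilde{\w}|\,|\a|\,|\b|\in L^{1}$, so dominated convergence is legitimate; meanwhile the two boundary terms combine into the pairing of $\a$ with $(h'(k-E)\,dE\wedge\theta\wedge\b)^{p,q}=h'(k-E)(\bar{\pa}E\wedge\theta^{1,0}+\pa E\wedge\theta^{0,1})\wedge\b$, which your estimate kills along a subsequence exactly as in Lemma \ref{L2}. With this single reordering (sum first, limit second) your argument becomes the paper's proof; as written, the intermediate claims are not justified by the hypotheses.
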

	\begin{proof}
		Following Proposition \ref{P5},	 we have $\tilde{\w}=\pa\theta^{0,1}+\bar{\pa}\theta^{1,0}$. Let $\gamma_{1}=\theta^{1,0}\wedge\b$ and $\gamma_{2}=\theta^{0,1}\wedge\b$. We denote by $h_{k}$ the compactly supported function on Lemma \ref{L8}. Noting  that 
		\begin{equation*}
		(h'(k-E)\cdot dE\wedge\theta\wedge\b)^{p,q}=h'(k-E)\cdot \bar{\pa}E\wedge\theta^{1,0}\wedge\b+h'(k-E)\cdot\pa E\wedge\theta^{0,1}\wedge\b.
		\end{equation*}
		Since $h_{k}\cdot\gamma$ has compact support, and $\pa^{\ast}\a=\bar{\pa}^{\ast}\a=0$, an integration by parts gives
		\begin{equation}\label{E33}
		\begin{split}
		0&=(\bar{\pa}^{\ast}\a,h_{k}\cdot\gamma_{1})+(\pa^{\ast}\a,h_{k}\cdot\gamma_{2})\\
		&=(\a,\bar{\pa}(h_{k}\cdot\gamma_{1})+\pa(h_{k}\cdot\gamma_{2}) )\\
		&=(\a,-h'(k-E)\cdot \bar{\pa}E\wedge\theta^{1,0}\wedge\b+h_{k}\cdot\bar{\pa}\theta^{1,0}\wedge\b)\\
		&+(\a,-h'(k-E)\cdot\pa E\wedge\theta^{0,1}\wedge\b+h_{k}\cdot\pa\theta^{0,1}\wedge\b)\\
		&=(\a,-h'(k-E)\cdot dE\wedge\theta\wedge\b)+(\a, h_{k}\cdot\tilde{\w}\wedge\b).\\
		\end{split}
		\end{equation}
		Following the idea in Lemma \ref{L8}, there exists a subsequence $\{k_{i}\}$ such that 
		\begin{equation}\label{E34}
		\lim_{i\rightarrow\infty}(\a,h'_{k_{i}}\cdot dE\wedge\theta\wedge\b )=0.
		\end{equation}
		and
		\begin{equation}\label{E35}
		\lim_{k\rightarrow\infty}(\a,h_{k}\cdot\tilde{\w}\wedge\b)=(\a,\tilde{\w}\wedge\b).
		\end{equation}
		Substituting (\ref{E34}) and (\ref{E35}) into (\ref{E33}), it follows that $(\a,\tilde{\w}\wedge\b)=0$. 
	\end{proof}
Lemma \ref{L4} is the technical heart of the vanishing theorem. 
It shows that the $(1,1)$-part of an exact form $\tilde{\w}$ is orthogonal to certain harmonic forms, p
rovided $\tilde{\w}$ is vanishingly exhaustible. This orthogonality is precisely what allows us to compare $\|\w\wedge\a\|_{L^2}$ the perturbation term.

\subsection{Vanishing theorem}
	We now establish the vanishing theorem for $L^{2}$-harmonic forms on complete almost K\"{a}hler manifold.  First, following Lemma \ref{L4}, we have an estimate on ($\bar{\pa}$, $\pa$)-harmonic $(p,q)$-forms as follows.
	
	\begin{corollary}\label{C1}
		Let  $(X,J,\w)$ be a complete almost K\"{a}hler manifold. Suppose that $\{\w_{i}\}$, $i=1,\cdots,L$ is a sequence of vanishing exhaustible $2$-forms. Then for any $\a\in\ker{\De_{\bar{\pa}}}\cap\ker{\De_{\pa}}\cap\Om^{p,q}_{(2)}(X)$,  we have
		\begin{equation*}
		\|\w\wedge\a\|_{L^2}\leq \|(\w-\sum_{i=1}^{L}\w^{1,1}_{i})\wedge \a\|_{L^2}.
		\end{equation*}
	\end{corollary}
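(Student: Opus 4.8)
The plan is to show that $\w\wedge\a$ and $(\w-\sum_i\w_i^{1,1})\wedge\a$ differ by a term that is $L^2$-orthogonal to everything relevant, so that the Pythagorean theorem yields the stated inequality. Write $\tilde\w := \sum_{i=1}^{L}\w_i^{1,1}$, so that $\w = \tilde\w + (\w-\tilde\w)$ and hence
\begin{equation*}
\w\wedge\a = \tilde\w\wedge\a + (\w-\tilde\w)\wedge\a.
\end{equation*}
The idea is that each $\w_i^{1,1}$ is (up to the $(1,1)$-projection) an exact vanishingly exhaustible $(1,1)$-form, so Lemma~\ref{L4} forces $\tilde\w\wedge\a$ to be orthogonal to the harmonic image $\w\wedge\a$ itself. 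If that orthogonality holds, then since $\w\wedge\a = \tilde\w\wedge\a + (\w-\tilde\w)\wedge\a$ and $\w\wedge\a \perp \tilde\w\wedge\a$, we get $\|\w\wedge\a\|^2 = (\w\wedge\a, (\w-\tilde\w)\wedge\a) \leq \|\w\wedge\a\|\cdot\|(\w-\tilde\w)\wedge\a\|$ by Cauchy--Schwarz, and dividing by $\|\w\wedge\a\|$ gives exactly the claimed bound.

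To make this rigorous, first I would verify that Lemma~\ref{L4} applies with the test form $\b=\a$ and with $\w\wedge\a$ (or rather $\w_i^{1,1}\wedge\a$) in the role of the $(p,q)$-form. By Proposition~\ref{P3}, $L(\a)=\w\wedge\a\in\ker\De_{\bar\pa}\cap\ker\De_\pa\cap\Om^{p+1,q+1}_{(2)}(X)$, so $\pa^\ast(\w\wedge\a)=\bar\pa^\ast(\w\wedge\a)=0$; thus $\w\wedge\a$ is a legitimate choice for the $\ker\pa^\ast\cap\ker\bar\pa^\ast$ form in Lemma~\ref{L4}. For the other slot we need $\a\in\ker\pa\cap\ker\bar\pa$, which again holds since $\a\in\ker\De_\pa\cap\ker\De_{\bar\pa}$. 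Applying Lemma~\ref{L4} to each vanishingly exhaustible $(1,1)$-form $\w_i^{1,1}$ with these two forms then yields $(\w\wedge\a,\ \w_i^{1,1}\wedge\a)=0$ for every $i$, and summing gives $(\w\wedge\a,\ \tilde\w\wedge\a)=0$, which is the orthogonality I want.

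The one technical point I would check carefully is that each $\w_i^{1,1}$ genuinely satisfies the hypotheses of Lemma~\ref{L4}, i.e.\ that it is a \emph{vanishingly exhaustible} $(1,1)$-form in the precise sense required. The $\w_i$ are vanishingly exhaustible $2$-forms, so each is bounded, exact ($\w_i=d\theta_i$), with $\theta_i$ being $f(E)$-bounded and $dE$ being $g(E)$-bounded for functions whose reciprocal-product series diverges. The $(1,1)$-projection $\w_i^{1,1}$ is bounded and, by Proposition~\ref{P5} applied to the real exact $(1,1)$-form $\w_i^{1,1}$, can be written as $\pa\theta_i^{0,1}+\bar\pa\theta_i^{1,0}$ with the same bounded potentials; the exhaustion data $(E,f,g)$ are inherited unchanged. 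Hence Lemma~\ref{L4} applies to each $\w_i^{1,1}$, and the orthogonality goes through.

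The main obstacle I anticipate is purely in the bookkeeping of the projection step: one must be sure that passing from $\w_i$ to its $(1,1)$-part $\w_i^{1,1}$ preserves both exactness in the weak sense needed by Proposition~\ref{P5} and the growth/boundedness controls on the potential $\theta_i$. Since $\w_i^{1,1}$ is not itself closed in general (only $\w_i$ is), the correct viewpoint is that Proposition~\ref{P5} directly produces the decomposition $\w_i^{1,1}=\pa\theta_i^{0,1}+\bar\pa\theta_i^{1,0}$ from the single hypothesis that $\w_i^{1,1}$ is exact as a $(1,1)$-form, and this is exactly the form in which Lemma~\ref{L4} consumes its hypothesis; so no extra closedness is required beyond what Proposition~\ref{P5} already extracts. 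Once that is in hand, the inequality follows by the orthogonal decomposition and Cauchy--Schwarz as above.
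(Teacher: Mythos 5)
Your proposal is correct and is essentially the paper's own argument: both deduce $\pa^{\ast}(\w\wedge\a)=\bar{\pa}^{\ast}(\w\wedge\a)=0$ from Proposition \ref{P3}, obtain the orthogonality $(\w\wedge\a,\w_{i}^{1,1}\wedge\a)=0$ from the cutoff integration-by-parts argument of Lemma \ref{L4}, and conclude by Cauchy--Schwarz. The one phrasing to fix is in your final paragraph: since $\w_{i}^{1,1}$ is in general neither closed nor exact, the decomposition $\w_{i}^{1,1}=\pa\theta_{i}^{0,1}+\bar{\pa}\theta_{i}^{1,0}$ is not obtained by applying Proposition \ref{P5} to $\w_{i}^{1,1}$ itself, but by projecting the identity $\w_{i}=d\theta_{i}$ onto bidegree $(1,1)$ (the computation inside the proof of Proposition \ref{P5}), which supplies exactly the input that the proof of Lemma \ref{L4} needs.
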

	
	\begin{proof}
		Suppose that $\a\in\ker{\De_{\bar{\pa}}}\cap\ker{\De_{\pa}}\cap\Om^{p,q}_{(2)}(X)$. By Proposition \ref{P3},
		 $\w\wedge\a\in\ker{\De_{\bar{\pa}}}\cap\ker{\De_{\pa}}\cap\Om^{p+1,q+1}_{(2)}(X)$ and so Lemma \ref{L5} implies that $$\pa^{\ast}(\w\wedge\a)=\bar{\pa}^{\ast}(\w\wedge\a)=0.$$ 
		Noting that $\w_{i}^{(0,2)}\wedge\a\in\Om^{p,q+2}(X)$, $\w_{i}^{(2,0)}\wedge\a\in\Om^{p+2,q}(X)$ and $\w\wedge\a\in\Om^{p+1,q+1}(X)$. Therefore, we have
		\begin{equation*}
		(\w\wedge\a,\w_{i}^{1,1}\wedge\a)=(\w\wedge\a,\w_{i}\wedge\a).
		\end{equation*}
		Following Lemma \ref{L3}, we obtain
		\begin{equation}\label{E22}
		(\w\wedge\a,\w_{i}\wedge \a)=0.
		\end{equation}	
		By (\ref{E22}), one can see that
		\begin{equation*}
		\begin{split}
		\|\w\wedge\a\|_{L^2}^{2}&=(\w\wedge\a, \sum_{i=1}^{L}\w^{1,1}_{i}\wedge\a)+(\w\wedge\a,(\w-\sum_{i=1}^{L}\w^{1,1}_{i})\wedge \a)\\
		&=(\w\wedge\a,(\w-\sum_{i=1}^{L}\w^{1,1}_{i})\wedge \a).\\
		\end{split}
		\end{equation*}
		Hence,
		$$\|\w\wedge\a\|_{L^2}\leq \|(\w-\sum_{i=1}^{L}\w^{1,1}_{i})\wedge \a\|_{L^2}.$$
	\end{proof}
	This corollary plays a crucial role in the vanishing argument. It shows that the $L^{2}$-norm of $\w\wedge\a$ can be controlled by the perturbation of $\w$ away from a sum of vanishingly exhaustible forms. This estimate is the key to proving Theorem \ref{T2}
	\begin{proof}[\textbf{Proof of Theorem \ref{T2}}]
		By the hypotheses, there exists a sequence of $(1,1)$-forms $\{\w_{1},\cdots,\w_{L}\}$ such that 
		$$\sup|\w-\sum_{i=1}^{L}(\w_{i})^{1,1}|\leq c.$$	
		Suppose that $p+q<n$ and $\a\in\ker{\De_{\bar{\pa}}}\cap\ker{\De_{\pa}}\cap\Om^{p,q}_{(2)}(X)$. By Corollary \ref{C1}, we have
		\begin{equation*}
		\begin{split}
		\|\w\wedge\a\|_{L^2}&\leq\|(\w-\sum_{i=1}^{L}\w^{1,1}_{i})\wedge\a)\|_{L^2}\\
		&\leq C_{1}(p,q)\sup|\w-\sum_{i=1}^{L}\w^{1,1}_{i}|\cdot\|\a\|_{L^2}.\\
		\end{split}
		\end{equation*}
		Here we use the inequality
		$$\langle\a\wedge\b,\a\wedge\b\rangle\leq\binom{r+s}{r}\langle\a,\a\rangle\langle\b,\b\rangle,$$
		where $\a\in\Om^{r}$ and $\b\in\Om^{s}$. Following \cite[Corollary 1.2.28]{Huy}, for any $\a\in\Om^{k}(X)$ we have
		$$[\La,L]\a=(n-k)\a.$$ 
		Therefore,
		$$
		(\w\wedge\a,\w\wedge\a)=([\La,L]\a+\w\wedge(\La\a),\a)=(n-k)\|\a\|_{L^2}^{2}+\|\La\a\|_{L^2}^{2}.
		$$
		We then have
		$$\|\a\|_{L^2}\leq C_{2}(p,q,n)\|\w\wedge\a\|_{L^2}.$$
		
		Combining the preceding inequalities yields
		$$\|\a\|_{L^2}\leq C_{1}C_{2}\sup|\w-\sum_{i=1}^{L}\w^{1,1}_{i}|\cdot\|\a\|_{L^2}\leq cC_{1}C_{2}\|\a\|_{L^2}.$$
		We can choose $c$ small enough to such that $cC_{1}C_{2}<1$, then $\a=0$.  Finally, Poincar\'{e} duality extends the argument just given to the case when $p+q>n$. Following Corollary \ref{C4}, it's easy to see $$\mathcal{H}^{p,q}_{(2);d}(X)=0$$
		for all $p+q\neq n$. 
	\end{proof}

	\section{The $L^{2}$-estimates}

	Throughout this section, we write $\a\lesssim\b$ to mean that $\a\leq C\b$ for some positive constant $C$ independent of certain parameters on which $\a$ and $\b$ depend. The parameters on which $C$ is independent will be clear or specified at each occurrence. We also use $\b\lesssim\a$ and $\a\approx\b$ analogously. We also denote by $\Om^{k}_{0}(X)$ (resp. $\Om^{p,q}_{0}(X)$) the smooth $k$- (resp. $(p, q)$-) forms with compact support on $X$.

	A differential form $\a$ in a Riemannian manifold $(X,g)$ is called bounded with respect to the metric $g$ if the $L^{\infty}$-norm of $\a$ is finite, namely,
	$$\|\a\|_{L^{\infty}(X)}=\sup_{x\in X}|\a(x)|<\infty.$$
	By definition, a $k$-form $\a$ is said to be $d$(bounded) if $\a=d\b$, where $\b$ is a bounded $(k-1)$-form. It is obvious that if $X$ is compact, then every exact form is $d$(bounded).  
	\begin{proposition} 
		Let $(X,J,\w)$ be a complete $2n$-dimensional almost K\"{a}hler manifold. If $\theta$ is a bounded $1$-form, then for any $\a\in\Om^{k}_{0}(X)$, $(k<n)$,
		\begin{equation}\label{E23}
		|((d\theta)^{1,1}\wedge\a,\w\wedge\a)|\leq c(n,k)\|\theta\|_{L^{\infty}(X)}\|\a\|_{L^{2}(X)}((\De_{\pa}+\De_{\bar{\pa}})\a,\a)^{\frac{1}{2}},
		\end{equation} 
		where $c(n,k)$ is a positive constant which depends only $n,k$.
	\end{proposition}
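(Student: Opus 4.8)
The plan is to integrate by parts so as to transfer the derivatives appearing in $(d\theta)^{1,1}$ off the bounded form $\theta$ and onto $\a$, thereby producing factors $\pa\a,\bar{\pa}\a,\pa^{\ast}\a,\bar{\pa}^{\ast}\a$ whose norms are all controlled by $((\De_{\pa}+\De_{\bar{\pa}})\a,\a)^{1/2}$. First I would record the bidegree bookkeeping already used in Proposition \ref{P5}: writing $\theta=\theta^{1,0}+\theta^{0,1}$, the $(1,1)$-component of $d\theta$ is
$$(d\theta)^{1,1}=\pa\theta^{0,1}+\bar{\pa}\theta^{1,0},$$
so that, abbreviating $L\a=\w\wedge\a$, the quantity to estimate is $((\pa\theta^{0,1}+\bar{\pa}\theta^{1,0})\wedge\a,L\a)$.

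Next, using that $\pa$ and $\bar{\pa}$ are graded derivations and that $\theta^{1,0},\theta^{0,1}$ have odd degree, I would write
$$\pa\theta^{0,1}\wedge\a=\pa(\theta^{0,1}\wedge\a)+\theta^{0,1}\wedge\pa\a,\qquad \bar{\pa}\theta^{1,0}\wedge\a=\bar{\pa}(\theta^{1,0}\wedge\a)+\theta^{1,0}\wedge\bar{\pa}\a.$$
Since $\a$ (hence $\theta^{1,0}\wedge\a$ and $\theta^{0,1}\wedge\a$) has compact support, integration by parts gives
$$(\pa(\theta^{0,1}\wedge\a),L\a)=(\theta^{0,1}\wedge\a,\pa^{\ast}L\a),\qquad (\bar{\pa}(\theta^{1,0}\wedge\a),L\a)=(\theta^{1,0}\wedge\a,\bar{\pa}^{\ast}L\a).$$
To remove the Lefschetz operator from the right-hand factors I would invoke the almost K\"ahler commutation identities $[L,\pa^{\ast}]=\sqrt{-1}\bar{\pa}$ and $[L,\bar{\pa}^{\ast}]=-\sqrt{-1}\pa$, that is,
$$\pa^{\ast}L\a=L\pa^{\ast}\a-\sqrt{-1}\bar{\pa}\a,\qquad \bar{\pa}^{\ast}L\a=L\bar{\pa}^{\ast}\a+\sqrt{-1}\pa\a.$$
After substituting, $((d\theta)^{1,1}\wedge\a,L\a)$ becomes a finite sum of terms, each of the shape $(\theta^{\bullet}\wedge\xi,L\eta)$ or $(\theta^{\bullet}\wedge\xi,\eta)$, where $\xi\in\{\a,\pa\a,\bar{\pa}\a\}$ and $\eta\in\{\a,\pa^{\ast}\a,\bar{\pa}^{\ast}\a,\pa\a,\bar{\pa}\a\}$, and in each term exactly one of $\xi,\eta$ is a first derivative of $\a$.

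Finally I would estimate each summand by Cauchy--Schwarz together with the pointwise wedge inequality $\|\theta^{\bullet}\wedge\xi\|\le c(k)\|\theta\|_{L^{\infty}(X)}\|\xi\|$ and the boundedness of the Lefschetz operator $\|L\eta\|\le c(n,k)\|\eta\|$ (which holds because $|\w|^{2}\equiv n$ pointwise). Each term is thereby dominated by $c(n,k)\|\theta\|_{L^{\infty}(X)}\|\a\|$ times one of $\|\pa\a\|,\|\bar{\pa}\a\|,\|\pa^{\ast}\a\|,\|\bar{\pa}^{\ast}\a\|$, and since
$$((\De_{\pa}+\De_{\bar{\pa}})\a,\a)=\|\pa\a\|^{2}+\|\pa^{\ast}\a\|^{2}+\|\bar{\pa}\a\|^{2}+\|\bar{\pa}^{\ast}\a\|^{2},$$
each of these four norms is at most $((\De_{\pa}+\De_{\bar{\pa}})\a,\a)^{1/2}$. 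Summing the finitely many terms and absorbing all combinatorial factors into a single constant $c(n,k)$ yields (\ref{E23}). The integration by parts and the norm estimates are routine; the one place demanding care is the precise bookkeeping of signs and bidegrees in the commutation identities, so that $\pa^{\ast}L\a$ and $\bar{\pa}^{\ast}L\a$ reduce cleanly to the derivative terms above with no leftover zeroth-order contribution. I expect no genuine obstacle beyond this bookkeeping, since the compact support of $\a$ eliminates all boundary terms.
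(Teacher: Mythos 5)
Your proposal is correct and follows essentially the same route as the paper's own proof: the same decomposition $(d\theta)^{1,1}=\pa\theta^{0,1}+\bar{\pa}\theta^{1,0}$, the same Leibniz-rule expansion and integration by parts against $\w\wedge\a$, the same commutation identities $[L,\pa^{\ast}]=\sqrt{-1}\bar{\pa}$ and $[L,\bar{\pa}^{\ast}]=-\sqrt{-1}\pa$, and the same Cauchy--Schwarz estimates combined via $\|\pa\a\|^{2}+\|\pa^{\ast}\a\|^{2}+\|\bar{\pa}\a\|^{2}+\|\bar{\pa}^{\ast}\a\|^{2}=((\De_{\pa}+\De_{\bar{\pa}})\a,\a)$. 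The six terms you obtain after substitution are exactly those appearing in the paper's computation, with matching signs.
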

	\begin{proof}
		An integration by parts gives
		\begin{equation*}
		\begin{split}
		((d\theta)^{1,1}\wedge\a,\w\wedge\a)&=((\pa\theta^{0,1}+\bar{\pa}\theta^{1,0})\wedge\a,\w\wedge\a )\\
		&=(\pa(\theta^{0,1}\wedge\a)+\theta^{0,1}\wedge\pa\a,\w\wedge\a)\\
		&+(\bar{\pa}(\theta^{1,0}\wedge\a)+\theta^{1,0}\wedge\bar{\pa}\a,\w\wedge\a) \\
		&=(\theta^{0,1}\wedge\a, [\pa^{\ast},L]\a)+(\theta^{0,1}\wedge\a, L(\pa^{\ast}\a)) +(\theta^{0,1}\wedge\pa\a,\w\wedge\a)\\
		&+(\theta^{1,0}\wedge\a,[\bar{\pa}^{\ast},L]\a)+(\theta^{1,0}\wedge\a,L(\bar{\pa}^{\ast}\a))+(\theta^{1,0}\wedge\bar{\pa}\a,\w\wedge\a) \\
		&=(\theta^{0,1}\wedge\a, -\sqrt{-1}\bar{\pa}\a)+(\theta^{0,1}\wedge\a, L(\pa^{\ast}\a)) +(\theta^{0,1}\wedge\pa\a,\w\wedge\a)\\
		&+(\theta^{1,0}\wedge\a,\sqrt{-1}\pa\a)+(\theta^{1,0}\wedge\a,L(\bar{\pa}^{\ast}\a))+(\theta^{1,0}\wedge\bar{\pa}\a,\w\wedge\a),\\
		\end{split}
		\end{equation*}
		where we used the almost K\"{a}hler identities $ [\pa^{\ast},L]=-\sqrt{-1}\bar{\pa}$, $[\bar{\pa}^{\ast},L]=\sqrt{-1}\pa$ in Proposition \ref{aki}. Therefore, we get 
		\begin{equation*}
		\begin{split}
		&\quad((d\theta)^{1,1}\wedge\a,\w\wedge\a)\\
		&\lesssim \|\theta^{0,1}\|_{L^{\infty}(X)}\|\a\|_{L^{2}(X)}\|\bar{\pa}\a\|_{L^{2}(X)}+\|\theta^{0,1}\|_{L^{\infty}(X)}\|\a\|_{L^{2}(X)}\|\pa^{\ast}\a\|_{L^{2}(X)}+\|\theta^{0,1}\|_{L^{\infty}(X)}\|\a\|_{L^{2}(X)}\|\pa\a\|_{L^{2}(X)} \\
		&+\|\theta^{1,0}\|_{L^{\infty}(X)}\|\a\|_{L^{2}(X)}\|\pa\a\|_{L^{2}(X)}+\|\theta^{1,0}\|_{L^{\infty}(X)}\|\a\|_{L^{2}(X)}\|\bar{\pa}^{\ast}\a\|_{L^{2}(X)}+\|\theta^{1,0}\|_{L^{\infty}(X)}\|\a\|_{L^{2}(X)}\|\bar{\pa}\a\|_{L^{2}(X)}\\
		&\leq c(n,k)\|\theta\|_{L^{\infty}(X)}\|\a\|_{L^{2}(X)}((\De_{\pa}+\De_{\bar{\pa}})\a,\a)^{\frac{1}{2}}.
		\end{split}
		\end{equation*}
	\end{proof}

	\begin{theorem}\label{T5}
		Let $(X,J,\w)$ be a complete $2n$-dimensional almost K\"{a}hler manifold. Suppose that there exists a bounded $1$-form $\theta$ such that
		$$\sup|\w-(d\theta)^{1,1}|\leq c,$$
		then for any $\a\in\Om^{k}_{0}(X)$, $(k\neq n)$,
		\begin{equation}\label{E24}
		\|\a\|_{L^2}(1-c(n,k)c)\leq c(n,k)\|\theta\|_{L^{\infty}(X)}((\De_{\pa}+\De_{\bar{\pa}})\a,\a)^{\frac{1}{2}}.
		\end{equation} 
		Furthermore, if $c$ is small enough such $c(n,k)c<1$, then 
		$$\ker(\De_{\pa}+\De_{\bar{\pa}})\cap\Om^{k}_{(2)}(X)=\{0\}.$$
	\end{theorem}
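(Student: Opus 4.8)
The plan is to sandwich $\|\w\wedge\a\|^{2}$ between the first-order data of $\a$ and $\|\a\|^{2}$ itself. Writing $\w=(d\theta)^{1,1}+\bigl(\w-(d\theta)^{1,1}\bigr)$ and using linearity of the inner product in its first slot, I expand
\[
\|\w\wedge\a\|^{2}=\bigl((d\theta)^{1,1}\wedge\a,\w\wedge\a\bigr)+\bigl((\w-(d\theta)^{1,1})\wedge\a,\w\wedge\a\bigr).
\]
The first summand is exactly the quantity estimated in the preceding proposition, so by \eqref{E23} it is bounded by $c(n,k)\|\theta\|_{L^{\infty}(X)}\|\a\|\,t^{1/2}$, where I abbreviate $t:=((\De_{\pa}+\De_{\bar{\pa}})\a,\a)$. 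For the second summand I apply Cauchy--Schwarz together with the pointwise wedge inequality $\langle r\wedge\a,r\wedge\a\rangle\leq\binom{k+2}{2}|r|^{2}|\a|^{2}$ (used already in the proof of Theorem \ref{T2}) and the hypothesis $\sup|\w-(d\theta)^{1,1}|\leq c$; this gives a bound of the form $c(n,k)\,c\,\|\a\|\,\|\w\wedge\a\|$.

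To turn control of $\|\w\wedge\a\|$ into control of $\|\a\|$, I use the $\mathfrak{sl}(2,\C)$ relation $[\La,L]\a=(n-k)\a$ on $k$-forms, already invoked in the proof of Theorem \ref{T2}, which yields $\|\w\wedge\a\|^{2}=(n-k)\|\a\|^{2}+\|\La\a\|^{2}$. For $k<n$ this gives $\|\w\wedge\a\|\geq\|\a\|$, hence $\|\a\|/\|\w\wedge\a\|\leq1$ (the case $\w\wedge\a=0$ forces $\a=0$ and is trivial). Assembling the two bounds and dividing by $\|\w\wedge\a\|$,
\[
\|\a\|\leq\|\w\wedge\a\|\leq c(n,k)\|\theta\|_{L^{\infty}(X)}\,t^{1/2}\,\frac{\|\a\|}{\|\w\wedge\a\|}+c(n,k)\,c\,\|\a\|\leq c(n,k)\|\theta\|_{L^{\infty}(X)}\,t^{1/2}+c(n,k)\,c\,\|\a\|,
\]
which rearranges to \eqref{E24} for $k<n$; renaming constants absorbs the two different $c(n,k)$'s into one. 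For $k>n$ I would deduce \eqref{E24} from the $k<n$ case by Hodge $\ast$ / Poincar\'e duality, exactly as the proof of Theorem \ref{T2} reduces $p+q>n$ to $p+q<n$: the star operator is an $L^{2}$-isometry, preserves compact support, and commutes with $\De_{\pa}+\De_{\bar{\pa}}$ up to the bidegree swap underlying Theorem \ref{T3}, so applying the degree-$(2n-k)$ estimate to $\ast\a$ and translating back recovers the claim.

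For the vanishing statement I must extend \eqref{E24}, which is proved only for $\a\in\Om^{k}_{0}(X)$, to $L^{2}$-forms in $\ker(\De_{\pa}+\De_{\bar{\pa}})$. First, a Gaffney-type cutoff argument on the complete manifold $X$ (the sum-Laplacian analogue of the lemma of \cite{Hua22} quoted earlier) shows that any $\a\in\ker(\De_{\pa}+\De_{\bar{\pa}})\cap\Om^{k}_{(2)}(X)$ satisfies $\pa\a=\pa^{\ast}\a=\bar{\pa}\a=\bar{\pa}^{\ast}\a=0$. Choosing cutoffs $\chi_{R}$ built from the distance function with $0\leq\chi_{R}\leq1$, $\chi_{R}\to1$ and $|d\chi_{R}|\leq C/R$, I apply \eqref{E24} to the compactly supported forms $\chi_{R}\a$. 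Because the four first-order operators annihilate $\a$, each of $\pa(\chi_{R}\a),\pa^{\ast}(\chi_{R}\a),\bar{\pa}(\chi_{R}\a),\bar{\pa}^{\ast}(\chi_{R}\a)$ is a pointwise algebraic product of $d\chi_{R}$ with $\a$ (a wedge, respectively a contraction), so that
\[
((\De_{\pa}+\De_{\bar{\pa}})(\chi_{R}\a),\chi_{R}\a)=\|\pa(\chi_{R}\a)\|^{2}+\|\pa^{\ast}(\chi_{R}\a)\|^{2}+\|\bar{\pa}(\chi_{R}\a)\|^{2}+\|\bar{\pa}^{\ast}(\chi_{R}\a)\|^{2}\leq\frac{C}{R^{2}}\|\a\|_{L^{2}(X)}^{2}\longrightarrow0.
\]
Since $\|\chi_{R}\a\|\to\|\a\|$, the inequality \eqref{E24} passes to the limit as $\|\a\|(1-c(n,k)c)\leq0$; under the smallness assumption $c(n,k)c<1$ this forces $\a=0$, giving $\ker(\De_{\pa}+\De_{\bar{\pa}})\cap\Om^{k}_{(2)}(X)=\{0\}$. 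The main obstacle is exactly this last passage to the limit: the integration by parts underlying \eqref{E23} is legitimate only for compactly supported forms, and it is the completeness of $X$ --- entering through the existence of good cutoffs and the Gaffney argument --- that lets the cutoff errors die and carries the compactly supported estimate to the genuine $L^{2}$-harmonic forms.
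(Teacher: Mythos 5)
Your derivation of the main estimate (\ref{E24}) is correct and follows essentially the same route as the paper: the same splitting $\w=(d\theta)^{1,1}+(\w-(d\theta)^{1,1})$ inside $\|\w\wedge\a\|^{2}$, the bound (\ref{E23}) for the first summand, the wedge inequality plus $\sup|\w-(d\theta)^{1,1}|\leq c$ for the second, and the Lefschetz identity $[\La,L]\a=(n-k)\a$ to convert control of $\|\w\wedge\a\|$ into control of $\|\a\|$; your variant of handling the error term (dividing by $\|\w\wedge\a\|$ and using $\|\a\|\leq\|\w\wedge\a\|$, rather than bounding the error above by a multiple of $\|\a\|^{2}$ as the paper does) is an immaterial difference.

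The substantive difference is one of completeness rather than of method: the paper's proof treats only $k<n$ and ends at ``Rearrangement gives (\ref{E24})'', leaving both the case $k>n$ and the vanishing clause unproved, whereas you supply both missing pieces --- the reduction of $k>n$ to $k<n$ via Hodge duality (in the same spirit as the Poincar\'{e}-duality step in the proof of Theorem \ref{T2}, resting on Theorem \ref{T3}), and the Gaffney-type cutoff argument passing from the a priori estimate on $\Om^{k}_{0}(X)$ to genuine $L^{2}$ kernel elements. That last step is genuinely needed, since (\ref{E24}) is proved only for compactly supported forms while $\ker(\De_{\pa}+\De_{\bar{\pa}})\cap\Om^{k}_{(2)}(X)$ consists of $L^{2}$ forms; your computation is sound, because an $L^{2}$ kernel element of the elliptic operator $\De_{\pa}+\De_{\bar{\pa}}$ on a complete manifold is smooth and annihilated by all four operators $\pa,\pa^{\ast},\bar{\pa},\bar{\pa}^{\ast}$, so the cutoff errors are $O(R^{-1})\|\a\|_{L^{2}}$ and die in the limit. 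In short: same approach where the paper gives one, correct supplementary arguments where it does not.
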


	\begin{proof}
		Suppose that $k<n$ and $\a\in\Om^{k}_{0}(X)$. Following (\ref{E23}), we get
		\begin{equation*}
		\begin{split}
		\|\a\|_{L^2}^{2}&\lesssim \|\w\wedge\a\|_{L^2}^{2}\\
		&=(\w\wedge\a, (d\theta)^{1,1}\wedge\a)+(\w\wedge\a,(\w-(d\theta)^{1,1})\wedge \a)\\
		&\leq c(n,k)c\|\a\|_{L^2}^{2}+c(n,k)\|\theta\|_{L^{\infty}(X)}\|\a\|_{L^{2}(X)}((\De_{\pa}+\De_{\bar{\pa}})\a,\a)^{\frac{1}{2}}.\\
		\end{split}
		\end{equation*}
		Rearrangement gives (\ref{E24}). The case $k>n$ follows by the Poincar\'{e} duality as the operator $\star:\Om^{p,q}\rightarrow\Om^{n-q,n-p}$ commutes with $\De_{\bar{\pa}}+\De_{\pa}$.
	\end{proof}

	\begin{corollary}\label{C2}
		Let $(X,J,\w)$ be a complete $2n$-dimensional almost K\"{a}hler manifold. Suppose that there exists a bounded $1$-form $\theta$ such that
		$$\sup|\w-(d\theta)^{1,1}|\leq c,$$
		then for any $\a\in\Om^{k}_{0}(X)$, $(k\neq n)$,
		\begin{equation}\label{E25}
		\|\a\|_{L^2}^{2}(1-c(n,k))(1-c(n,k)c-4c(n,k)\|\theta\|_{L^{\infty}(X)}\sup|N_{J}|)\leq c^{2}(n,k)\|\theta\|^{2}_{L^{\infty}(X)}(\De_{d}\a,\a).
		\end{equation} 
		Furthermore, if $c(n,k)(c+4\|\theta\|_{L^{\infty}(X)}\sup|N_{J}|)<1$, then for any $k\neq n$,
		$$\mathcal{H}^{k}_{(2);d}(X)=\{0\}.$$
	\end{corollary}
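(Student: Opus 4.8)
The plan is to upgrade the estimate \eqref{E24} of Theorem \ref{T5}, which controls $\|\a\|$ by $((\De_{\pa}+\De_{\bar{\pa}})\a,\a)^{1/2}$, into an estimate involving the de Rham Laplacian $\De_d$. The essential point is to relate the Dolbeault energy $(\De_{\pa}+\De_{\bar{\pa}})\a,\a)=\|\pa\a\|^2+\|\pa^\ast\a\|^2+\|\bar\pa\a\|^2+\|\bar\pa^\ast\a\|^2$ to the de Rham energy $(\De_d\a,\a)=\|d\a\|^2+\|d^\ast\a\|^2$. First I would expand $d=\mu+\pa+\bar\pa+\bar\mu$ and $d^\ast=\mu^\ast+\pa^\ast+\bar\pa^\ast+\bar\mu^\ast$. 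Since these four pieces have distinct bidegrees, the components of $d\a$ living in different bidegrees are mutually orthogonal, so $\|d\a\|^2$ decomposes as a sum of the squared norms of the bidegree components; the same holds for $\|d^\ast\a\|^2$. This lets me isolate the $\pa,\bar\pa$ contributions from the $\mu,\bar\mu$ contributions.

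The key algebraic step is that the zeroth-order operators $\mu,\bar\mu$ (and their adjoints) are controlled pointwise by the Nijenhuis tensor: from $\mu+\bar\mu=-\tfrac14(N_J\otimes\mathrm{id}_\C)^\ast$ one gets a bound of the shape $|\mu\a|,|\bar\mu\a|,|\mu^\ast\a|,|\bar\mu^\ast\a|\lesssim \sup|N_J|\,|\a|$. Therefore
\begin{equation*}
(\De_{\pa}+\De_{\bar{\pa}})\a,\a)\leq (\De_d\a,\a)+c(n,k)\sup|N_J|^2\,\|\a\|^2
\end{equation*}
(after collecting the cross terms and the $\mu,\bar\mu$ squared terms), and more importantly the mixed terms pairing a $\pa,\bar\pa$ output against a $\mu,\bar\mu$ output are bounded by $\sup|N_J|\,\|\a\|\,((\De_{\pa}+\De_{\bar\pa})\a,\a)^{1/2}$. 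Substituting \eqref{E24} to replace the factor $\|\a\|$ on the right by the Dolbeault energy reintroduces the $\|\theta\|_{L^\infty}\sup|N_J|$ term visible in the statement \eqref{E25}.

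The remaining step is bookkeeping: starting from $\|\a\|(1-c(n,k)c)\leq c(n,k)\|\theta\|_{L^\infty(X)}((\De_{\pa}+\De_{\bar\pa})\a,\a)^{1/2}$, I square both sides, then bound $((\De_{\pa}+\De_{\bar\pa})\a,\a)$ above by $(\De_d\a,\a)$ plus the Nijenhuis error term, and finally absorb the $\sup|N_J|$ term back onto the left-hand side. This is exactly what produces the factor $(1-c(n,k)c-4c(n,k)\|\theta\|_{L^\infty(X)}\sup|N_J|)$ on the left of \eqref{E25}. For the vanishing conclusion, if $c(n,k)(c+4\|\theta\|_{L^\infty(X)}\sup|N_J|)<1$ then both parenthetical factors on the left are positive, so for $\a\in\mathcal{H}^k_{(2);d}(X)$ we have $(\De_d\a,\a)=0$ (by Lemma \ref{L1}, a harmonic $L^2$ form is closed and co-closed, so its de Rham energy vanishes and it may be approximated in energy by compactly supported forms), forcing $\|\a\|=0$.

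The main obstacle I anticipate is the density/cutoff argument needed to pass \eqref{E25} from $\Om^k_0(X)$ to genuine $L^2$-harmonic forms on the complete non-compact manifold $X$: an $L^2$-harmonic form need not have compact support, so one must approximate it by compactly supported forms while controlling all four Dolbeault energies and the full de Rham energy simultaneously. Here completeness is essential, via the existence of cutoff functions with controlled gradient, exactly as in the proof of Lemma \ref{L2}; the fact that $\De_d\a=0$ gives $d\a=d^\ast\a=0$ (Lemma \ref{L1}) is what makes the limiting energy vanish. Keeping track of the precise universal constant $c(n,k)$ through the bidegree decomposition and the Nijenhuis estimate is the other source of friction, though it is routine once the orthogonality of bidegree components is invoked.
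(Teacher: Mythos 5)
Your overall strategy (compare the Dolbeault energy with the de Rham energy up to Nijenhuis-controlled errors, then feed in \eqref{E24} and absorb) is the same as the paper's, and both your final bookkeeping and your attention to the density argument needed to pass from $\Om^{k}_{0}(X)$ to genuine $L^{2}$-harmonic forms are fine. But there is a genuine gap in the middle step, precisely where the paper has to invoke a nontrivial input. The corollary concerns $k$-forms of \emph{mixed} bidegree, and for $\a=\sum_{p+q=k}\a^{p,q}$ bidegree orthogonality does \emph{not} ``isolate the $\pa,\bar{\pa}$ contributions from the $\mu,\bar{\mu}$ contributions'': the component of $d\a$ in bidegree $(r,s)$ is
$$(d\a)^{r,s}=\pa\a^{r-1,s}+\bar{\pa}\a^{r,s-1}+\mu\a^{r-2,s+1}+\bar{\mu}\a^{r+1,s-2},$$
so $\|d\a\|^{2}=\sum_{r,s}\|(d\a)^{r,s}\|^{2}$ contains cross terms of the form $2\,{\rm{Re}}\,(\pa\a^{r-1,s},\bar{\pa}\a^{r,s-1})$, in which \emph{both} factors are first order. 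These are exactly the mixed terms your proposal never addresses: you only estimate pairs with one factor from $\{\pa,\bar{\pa}\}$ and one from $\{\mu,\bar{\mu}\}$, which are indeed $O(\sup|N_{J}|)$, but a term like $(\pa\a^{r-1,s},\bar{\pa}\a^{r,s-1})$ admits no bound by $\sup|N_{J}|\,\|\a\|\,((\De_{\pa}+\De_{\bar{\pa}})\a,\a)^{1/2}$ from pointwise considerations, and nothing in your argument controls it.

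The way out --- and this is the key step in the paper's proof --- is to pair this cross term with its counterpart $(\bar{\pa}^{\ast}\a^{r-1,s},\pa^{\ast}\a^{r,s-1})$ coming from $\|d^{\ast}\a\|^{2}$; integration by parts turns the sum into $([\pa,\bar{\pa}^{\ast}]\a^{r-1,s},\a^{r,s-1})$ (the graded commutator here is an anticommutator), and one then invokes the Cirici--Wilson almost K\"{a}hler identities
$$[\pa,\bar{\pa}^{\ast}]=[\bar{\mu}^{\ast},\bar{\pa}]+[\mu,\pa^{\ast}],\qquad [\bar{\pa},\pa^{\ast}]=[\mu^{\ast},\pa]+[\bar{\mu},\bar{\pa}^{\ast}],$$
which say that on an almost K\"{a}hler (not merely almost Hermitian) manifold these commutators, a priori first order, are Nijenhuis-small. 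This is precisely the paper's estimate of its term $I$, giving $|I|\lesssim\sup|N_{J}|\,\|\a\|\,((\De_{\pa}+\De_{\bar{\pa}})\a,\a)^{1/2}$, and it is the only place where the almost K\"{a}hler hypothesis enters beyond what Theorem \ref{T5} already uses. Without it, your claimed inequality $((\De_{\pa}+\De_{\bar{\pa}})\a,\a)\leq(\De_{d}\a,\a)+(\text{Nijenhuis error})$ is unproved; note that in the integrable case it contains the K\"{a}hler identity $[\pa,\bar{\pa}^{\ast}]=0$, so it is a theorem, not bookkeeping. (For a form of \emph{pure} bidegree your orthogonality argument does work as stated --- all four outputs of $d$ then land in distinct bidegrees, all cross terms vanish, and one gets a cleaner estimate with no Nijenhuis term at all --- but that case does not cover the corollary, which is stated for $k$-forms.)
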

	\begin{proof}
		Firstly, expanding $\De_{d}=[d,d^{\ast}]$ and using $d=\pa+\mu+\bar{\pa}+\bar{\mu}$, we have
		\begin{equation*}
		\begin{split}
		\De_{d}&=\De_{\pa}+\De_{\bar{\pa}}+\De_{\mu}+\De_{\bar{\mu}}\\
		&+[\bar{\pa},\pa^{\ast}]+[\pa,\bar{\pa}^{\ast}]\\
		&+[\pa+\bar{\pa},\mu^{\ast}+\bar{\mu}^{\ast}]+[\mu+\bar{\mu},\pa^{\ast}+\bar{\pa}^{\ast}].\\
		\end{split}
		\end{equation*}
		We observe that
		\begin{equation*}
		\begin{split}
		\mathrm{I}&=(([\bar{\pa},\pa^{\ast}]+[\pa,\bar{\pa}^{\ast}])\a,\a)\\
		&=2([\mu^{\ast},\pa]\a,\a)+2([\bar{\mu},\bar{\pa}^{\ast}]\a,\a)\\
		&=2(\pa\a,\mu\a)+2(\mu^{\ast}\a,\pa^{\ast}\a)+2(\bar{\pa}^{\ast}\a,\a)+2(\bar{\mu}\a,\bar{\pa}\a).
		\end{split}
		\end{equation*}
		Here we use the identities in \cite[Proposition 3.3]{CW20} as follows
		$$[\pa,\bar{\pa}^{\ast}]=[\bar{\mu}^{\ast},\bar{\pa}]+[\mu,\pa^{\ast}]\ and\ [\bar{\pa},\pa^{\ast}]=[\mu^{\ast},\pa]+[\bar{\mu},\bar{\pa}^{\ast}].$$
		Therefore, we have
		\begin{equation*}
		\begin{split}
		|\mathrm{I}|&\leq 2\sup|N_{J}|\cdot\|\a\|_{L^2}(\|\pa\a\|_{L^2}+\|\pa^{\ast}\a\|_{L^2}+\|\bar{\pa}\a\|_{L^2}+\|\bar{\pa}^{\ast}\a\|_{L^2})\\
		&\leq 2\sup|N_{J}|\cdot\|\a\|_{L^2}((\De_{\pa}+\De_{\bar{\pa}})\a,\a)^{\frac{1}{2}}.\\
		\end{split}
		\end{equation*}
		We also observe that
		\begin{equation*}
		\begin{split}
		|\mathrm{II}|&=|(([\pa+\bar{\pa},\mu^{\ast}+\bar{\mu}^{\ast}]+[\mu+\bar{\mu},\pa^{\ast}+\bar{\pa}^{\ast}])\a,\a)|\\
		&=2|((\mu^{\ast}+\bar{\mu}^{\ast})\a,(\pa^{\ast}+\bar{\pa}^{\ast})\a)+((\mu+\bar{\mu})\a,(\pa+\bar{\pa})\a)|\\
		&\leq 2\sup|N_{J}|\cdot\|\a\|_{L^2}(\|\pa\a\|_{L^2}+\|\pa^{\ast}\a\|_{L^2}+\|\bar{\pa}\a\|_{L^2}+\|\bar{\pa}^{\ast}\a\|_{L^2})\\
		&\leq 2\sup|N_{J}|\cdot\|\a\|_{L^2}((\De_{\pa}+\De_{\bar{\pa}})\a,\a)^{\frac{1}{2}}.\\
		\end{split}
		\end{equation*}
		Combining the preceding inequalities with estimate (\ref{E24}) yields
		\begin{equation*}
		\begin{split}
		(\De_{d}\a,\a)&\geq((\De_{\pa}+\De_{\bar{\pa}})\a,\a)-4\sup|N_{J}|\cdot\|\a\|_{L^2}((\De_{\pa}+\De_{\bar{\pa}})\a,\a)^{\frac{1}{2}}.\\
		&\geq((\De_{\pa}+\De_{\bar{\pa}})\a,\a)^{\frac{1}{2}}(c(n,k)^{-1}\|\theta\|^{-1}_{L^{\infty}(X)}(1-c(n,k)c)-4\sup|N_{J}|)\|\a\|_{L^2} \\
		&\geq c(n,k)^{-1}\|\theta\|^{-1}_{L^{\infty}(X)}(1-c(n,k)c)(c(n,k)^{-1}\|\theta\|^{-1}_{L^{\infty}(X)}(1-c(n,k)c)-4\sup|N_{J}|)\|\a\|_{L^2}^{2}.
		\end{split}
		\end{equation*} 
		Rearrangement gives (\ref{E25}).
	\end{proof}

	Recall that by \cite{CW20} (cf. also \cite{TT}) on complete almost-K\"{a}hler manifolds,
	$$\De_{\bar{\pa}}+\De_{\mu}=\De_{\pa}+\De_{\bar{\mu}}.$$
	A sharper estimate for the individual Laplacians $\De_{\pa}$ and $\De_{\bar{\pa}}$ follows from Corollary \ref{C3}.
	
	\begin{corollary}\label{C3}
		Let $(X,J,\w)$ be a complete $2n$-dimensional almost K\"{a}hler manifold. Suppose that there exists a bounded $1$-form $\theta$ such that
		$$\sup|\w-(d\theta)^{1,1}|\leq c,$$
		then for any $\a\in\Om^{k}_{0}(X)$, $(k\neq n)$,
		\begin{equation}\label{E26}
		\|\a\|_{L^2}^{2}((1-c(n,k)c)^{2}-2c(n,k)^{2}\|\theta\|^{2}_{L^{\infty}(X)}\sup|N_{J}|^{2})\leq 2c^{2}(n,k)\|\theta\|^{2}_{L^{\infty}(X)}(\De_{\bullet}\a,\a),
		\end{equation} 
		where $\bullet=\pa,\bar{\pa}$. Furthermore, if $c(n,k)(c+\sqrt{2}\|\theta\|_{L^{\infty}(X)}\sup|N_{J}|)<1$, then for any $k\neq n$,
		$$\mathcal{H}^{k}_{(2);\bullet}(X)=\{0\}.$$
	\end{corollary}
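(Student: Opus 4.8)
The plan is to bootstrap from the estimate (\ref{E24}) already established in Theorem \ref{T5}, trading the combined operator $\De_{\pa}+\De_{\bar{\pa}}$ for a single $\De_{\bullet}$ at the cost of a zeroth-order Nijenhuis error. First I would square (\ref{E24}) to get, for every $\a\in\Om^{k}_{0}(X)$ with $k\neq n$,
$$\|\a\|^{2}(1-c(n,k)c)^{2}\leq c(n,k)^{2}\|\theta\|^{2}_{L^{\infty}(X)}((\De_{\pa}+\De_{\bar{\pa}})\a,\a),$$
which reduces the problem to comparing $(\De_{\pa}+\De_{\bar{\pa}})$ with $2\De_{\bullet}$.

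Next I would invoke the identity $\De_{\bar{\pa}}+\De_{\mu}=\De_{\pa}+\De_{\bar{\mu}}$ recalled just before the statement. Rewriting it as $\De_{\pa}-\De_{\bar{\pa}}=\De_{\mu}-\De_{\bar{\mu}}$, one obtains $\De_{\pa}+\De_{\bar{\pa}}=2\De_{\bar{\pa}}+(\De_{\mu}-\De_{\bar{\mu}})$ for the case $\bullet=\bar{\pa}$, and symmetrically $\De_{\pa}+\De_{\bar{\pa}}=2\De_{\pa}+(\De_{\bar{\mu}}-\De_{\mu})$ for $\bullet=\pa$. Pairing with $\a$ gives
$$((\De_{\pa}+\De_{\bar{\pa}})\a,\a)=2(\De_{\bullet}\a,\a)\pm((\De_{\mu}-\De_{\bar{\mu}})\a,\a).$$
Since $\mu$ and $\bar{\mu}$ are zeroth-order operators proportional to the Nijenhuis tensor via $\mu+\bar{\mu}=-\tfrac{1}{4}(N_{J}\otimes\mathrm{id}_{\C})^{\ast}$, I would discard the sign-definite negative piece and bound the remaining one by $(\De_{\mu}\a,\a)=\|\mu\a\|^{2}+\|\mu^{\ast}\a\|^{2}\leq 2\sup|N_{J}|^{2}\|\a\|^{2}$ (and symmetrically with $\De_{\bar{\mu}}$ when $\bullet=\pa$); here the factor $2$ is precisely the two summands $\|\mu\a\|^{2}+\|\mu^{\ast}\a\|^{2}$ once the proportionality constant is absorbed into $\sup|N_{J}|$, consistent with the bounds used in Corollary \ref{C2}.

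Substituting into the squared estimate yields
$$\|\a\|^{2}(1-c(n,k)c)^{2}\leq 2c(n,k)^{2}\|\theta\|^{2}_{L^{\infty}(X)}(\De_{\bullet}\a,\a)+2c(n,k)^{2}\|\theta\|^{2}_{L^{\infty}(X)}\sup|N_{J}|^{2}\|\a\|^{2},$$
and moving the last term to the left gives exactly (\ref{E26}). The coefficient $(1-c(n,k)c)^{2}-2c(n,k)^{2}\|\theta\|^{2}_{L^{\infty}(X)}\sup|N_{J}|^{2}$ is strictly positive iff $1-c(n,k)c>\sqrt{2}\,c(n,k)\|\theta\|_{L^{\infty}(X)}\sup|N_{J}|$, i.e. $c(n,k)(c+\sqrt{2}\|\theta\|_{L^{\infty}(X)}\sup|N_{J}|)<1$, which is exactly the hypothesis of the furthermore clause.

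For the vanishing, I would take $\a\in\mathcal{H}^{k}_{(2);\bullet}(X)$; since a $\De_{\bullet}$-harmonic $L^{2}$ form satisfies $\bullet\a=\bullet^{\ast}\a=0$, one has $(\De_{\bullet}\a,\a)=0$. Because (\ref{E26}) is stated only for compactly supported forms, the main obstacle is extending it to such $L^{2}$-harmonic forms: I would run a Gaffney-type cutoff argument, applying (\ref{E26}) to $\chi_{R}\a$ for a sequence of cutoffs $\chi_{R}$ exhausting $X$ and letting $R\to\infty$. Completeness guarantees $\|\chi_{R}\a\|^{2}\to\|\a\|^{2}$ while the commutator terms generated by $d\chi_{R}$ decay (using $\bullet\a=\bullet^{\ast}\a=0$ and $|d\chi_{R}|\to0$), so the right-hand side tends to $0$; with the coefficient positive this forces $\a=0$. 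The algebraic comparison of Laplacians is routine, so the only delicate point is ensuring this passage to the limit, which hinges on completeness of $X$.
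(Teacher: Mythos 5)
Your proposal is correct and follows essentially the same route as the paper: square the estimate of Theorem \ref{T5}, use the identity $\De_{\bar{\pa}}+\De_{\mu}=\De_{\pa}+\De_{\bar{\mu}}$ to write $\De_{\bullet}$ as $\frac{1}{2}(\De_{\pa}+\De_{\bar{\pa}})$ up to the zeroth-order terms $\De_{\mu},\De_{\bar{\mu}}$, bound those by $\sup|N_{J}|^{2}\|\a\|^{2}$, and rearrange. The only substantive difference is that you spell out the cutoff/density argument needed to pass from compactly supported forms to $L^{2}$-harmonic forms in the vanishing clause, a step the paper leaves implicit.
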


	\begin{proof}
		For any  $\a\in\Om^{k}_{0}(X)$, we have
		\begin{equation*}
		\begin{split}
		((\De_{\mu}+\De_{\bar{\mu}})\a,\a)&=\|\mu\a\|_{L^2}^{2}+\|\bar{\mu}\a\|_{L^2}^2+\|\mu^{\ast}\a\|_{L^2}^{2}+\|\bar{\mu}^{\ast}\a\|^2_{L^2}\\
		&\leq 2\sup|N_{J}|^{2}\cdot\|\a\|_{L^2}^{2}.\\
		\end{split}
		\end{equation*}
		We observe that
		\begin{equation*}
		\begin{split}
		&\De_{\pa}=\frac{1}{2}(\De_{\pa}+\De_{\bar{\pa}}+\De_{\mu}-\De_{\bar{\mu}}),\\
		&\De_{\bar{\pa}}=\frac{1}{2}(\De_{\pa}+\De_{\bar{\pa}}-\De_{\mu}+\De_{\bar{\mu}}).\\
		\end{split}
		\end{equation*}
		Combining the preceding inequalities with estimate (\ref{E24}) yields
		\begin{equation*}
		\begin{split}
		(\De_{\bullet}\a,\a)&\geq\frac{1}{2}((\De_{\pa}+\De_{\bar{\pa}})\a,\a)-\sup|N_{J}|^{2}\cdot\|\a\|_{L^2}^{2}.\\
		&\geq(\frac{1}{2}(1-c(n,k)c)^{2}c(n,k)^{-2}\|\theta\|^{-2}_{L^{\infty}(X)}-\sup|N_{J}|^{2})\|\a\|_{L^2}^{2}.
		\end{split}
		\end{equation*} 
		Rearrangement gives (\ref{E26}).
	\end{proof}

	\section{Applications}
	Having established a range of vanishing theorems and $L^{2}$-estimates, we now turn to applications. In particular, we demonstrate how these analytic tools can be used to study the topology of compact almost Kähler manifolds with negative sectional curvature, leading to new results on the Hirzebruch $\chi_{y}$-genus.
	\subsection{Hirzebruch $\chi_{y}$-genus }
We now turn to the definition of the Hirzebruch $\chi_{y}$-genus and its relation to the index theory of elliptic operators (cf.\cite{Huy}).	

Let $(X,J)$ be a closed almost complex manifold of real dimension $2n$ with an almost complex structure $J$.
The choice of an almost Hermitian metric on $X$ enables us to define the Hodge-star operator $\star$ and the formal adjoint $\bar{\pa}^{\ast}$ the $\bar{\pa}$-operator. For each $0\leq p\leq n$,  
we define an elliptic differential operator
\[\mathcal{D}_{p}:=\bar{\pa}+\bar{\pa}^{\ast}:\Om^{p,+}(X)\rightarrow\Om^{p,-}(X), \]
where 
\[\Om^{p,+}(X)=\bigoplus_{q=even}\Om^{p,q}(X)\quad and\quad \Om^{p,-}(X)=\bigoplus_{q=odd}\Om^{p,q}(X). \]
The index of this operator is denoted, following Hirzebruch's notation, by
\[ \chi_{p}(X):= \dim\ker(\bar{\pa}+\bar{\pa}^{\ast})-\dim{\rm{coker}}(\bar{\pa}+\bar{\pa}^{\ast}).\]
Applying the general Riemann--Roch--Hirzebruch theorem (first proved by Hirzebruch for projective manifolds and later extended to the general case by Atiyah and Singer),
we have the formula
\[ \chi_{p}(X)=\int_{X}\mathrm{ch}(\Om^{p,0}(X))\mathrm{Td}(TX),\]
where $\mathrm{ch}(\cdot)$ is the Chern character and $\mathrm{Td}(\cdot)$ is the Todd class. 
The Hirzebruch $\chi_{y}$-genus  $\chi_{y}(X)$ is then defined as the polynomial
\[ \chi_{y}(X)=\sum_{p=0}^{n}\chi_{p}(X)y^{p}.\]

Let $\gamma_{i}$, $1\leq i\leq n$, denote the formal Chern roots of the tangent bundle; that is, the $i$-th elementary symmetric polynomial of $\gamma_{1},\cdots,\gamma_{n}$ represents the $i$-th Chern class $c_{i}(X)$ of
$TX$, then the Atiyah--Singer Index Theorem yields the following expression:
\[\chi_{y}(X)=\int_{X}\prod_{i=1}^{n}(1+ye^{-\gamma_{i}})\frac{\gamma_{i}}{1-e^{-\gamma_{i}}}.\]
The Hirzebruch $\chi_{y}$-genus $\chi_{y}(X)$ interpolates between three fundamental topological invariants of $X$ (cf.\cite{Huy}): the Euler number (when $y=-1$), the Todd genus (when $y=0$), the signature (when $y=1$). 
\subsection{Atiyah's $L^{2}$-index theorem}

Let $\pi:(\tilde{X},\tilde{J})\rightarrow(X,J)$ be the universal covering with $\Gamma=\pi_{1}(X)$ as an isometry group of deck transformations. 
The Dolbeault-type operators $\mathcal{D}_{p}=\bar{\pa}+\bar{\pa}^{\ast}$ lift naturally to operators on $(\tilde{X},\tilde{J})$:
\[\tilde{\mathcal{D}}_{p}=\widetilde{\bar{\pa}+\bar{\pa}^{\ast}}:\Om^{p,+}_{(2)}(\tilde{X})\rightarrow\Om^{p,-}_{(2)}(\tilde{X}).\]

We denote by $\dim_{\Gamma}\ker(\tilde{\mathcal{D}}_{p})$ and $\dim_{\Gamma}\mathrm{coker}(\tilde{\mathcal{D}}_{p})$ 
the von Neumann dimensions of $\ker(\tilde{\mathcal{D}}_{p})$ and  $\mathrm{coker}(\tilde{\mathcal{D}}_{p})$ 
with respect to $\Gamma$, which are nonnegative real numbers \cite{Ati,CG,Luck}. 
The following fundamental property of the von Neumann dimension is essential (see \cite[Theorem 1.12]{Luck}):
\[ \dim_{\Gamma}\ker(\tilde{\mathcal{D}}_{p})=0\Leftrightarrow \ker(\tilde{\mathcal{D}}_{p})=0 ,\]
\[ \dim_{\Gamma}\mathrm{coker}(\tilde{\mathcal{D}}_{p})=0\Leftrightarrow \mathrm{coker}(\tilde{\mathcal{D}}_{p})=0.\]

The $L^{2}$-index of the lifted operators $\tilde{\mathcal{D}}_{p}$ is then defined as 
\begin{equation*}
L^{2}\mathrm{Index}_{\Gamma}(\tilde{\mathcal{D}}_{p})=\dim_{\Gamma}(\ker\tilde{\mathcal{D}}_{p})-\dim_{\Gamma}({\rm{coker}}\tilde{\mathcal{D}}_{p}).
\end{equation*}
We now recall the following Atiyah's $L^{2}$ index theorem \cite{Ati,Pan}.
\begin{theorem}\cite[Theorem 6.1]{Pan}
	Let $X$ be a closed Riemannian manifold and let $P$ be a determined elliptic operator on sections of certain bundles over $X$. 
	Denote  by $\tilde{\mathcal{D}}$ its lift of $\mathcal{D}$ to the universal convering space $\tilde{X}$, and let $\Ga=\pi_{1}(M)$. 
	Then the $L^{2}$ kernel of $\tilde{P}$ has a finite $\Ga$-dimension, and 
	$$L^{2}\mathrm{Index}_{\Ga}(\tilde{P})=\mathrm{Index}(P).$$
\end{theorem}

As an immediate consequence, we obtain the following relation  between the $\chi_{p}$-genus of $X$ and the $L^{2}$-index of the lifted operator.

\begin{proposition}\label{Pro2}
	Let $(X,J)$ be a closed almost complex manifold. Then the following equality holds:
	\[\chi_{p}(X) = L^{2}\mathrm{Index}_{\Gamma}(\tilde{\mathcal{D}}_{p}).\]
\end{proposition}

\begin{remark}
	When $J$ is integrable, i.e. $\bar{\pa}^{2}=0$, $\chi_{p}(X)$ equals the index of the following well-known Dolbeault complex
	$$\cdots\Om^{p,q-1}(X)\xrightarrow{\bar{\pa}}\Om^{p,q}(X)\xrightarrow{\bar{\pa}}\Om^{p,q+1}(X)\rightarrow\cdots$$
	and consequently
	$$\chi_{p}(X)=\sum_{q=0}^{n}(-1)^{q}h^{p,q}(X),$$
	where $h^{p,q}(X)$ denote the Hodge numbers of  $X$. 
	
	In the integrable case, $\chi_{p}(X)$ reduces to the alternating sum of Hodge numbers, which are known to satisfy various inequalities under non-positive (or negative) curvature assumptions (see \cite{CF,Gro}). Our goal is to extend such results to the almost Kähler setting, where no Dolbeault cohomology exists, but the index $\chi_{p}(X)$ remains well-defined.	
\end{remark}

	\subsection{The case of small Nijenhuis tensor}
	The following theorem recovers the estimates of Theorem \ref{T11} in the special case $c=0$, i.e. when $\w$ itself is exact and bounded. This situation occurs, for example, on hyperbolic manifolds after lifting to the universal cover, as we will see shortly.
	\begin{theorem}\label{Thm6}
	Let $(X,J,\w)$ be a complete $2n$-dimensional almost K\"{a}hler manifold. Suppose that there exists a bounded $1$-form $\theta$ such that
	\[\w=d\theta.\]
	Then for any $\a\in\Om^{p,q}_{0}(X)$ with $k:=p+q\neq n$, the following estimates hold:
	\begin{equation*}
	\begin{split}
	&\|\a\|_{L^{2}(X)}\leq c_2(n,k)\|\theta\|_{L^{\infty}(X)}\Big((\De_{\pa}+\De_{\bar{\pa}})\a,\a\Big)^{\frac{1}{2}},\\
	&\|\a\|_{L^{2}(X)}^{2}\Big(1-c_2(n,k)^{2}\|\theta\|^{2}_{L^{\infty}(X)}\sup|N_{J}|^{2}\Big)
	\leq 2c_2(n,k)^{2}\|\theta\|^{2}_{L^{\infty}(X)}(\De_{\bullet}\a,\a),
	\end{split}
	\end{equation*}
	where $\bullet=\pa,\bar{\pa}$. 
\end{theorem}	
	\begin{proof}
	This conclusion can be deduced immediately from Theorem \ref{T11} (with $c=0$ in this case)
	\end{proof}
	Next, we establish an $L^{2}$-estimate for the operator $\bar{\partial} + \bar{\partial}^{*}$ under the same hypotheses. The proof of Theorem \ref{Thm3} is technically more involved because it deals directly with the operator $\bar{\partial}+\bar{\partial}^{\ast}$ whose square is not simply the sum of Laplacians. The key is to exploit the identity $\bar{\pa}^{2}=-[\pa,\bar{\mu}]$ and use the almost Kähler identities to control the resulting cross terms.
\begin{theorem}\label{Thm3}
	Let $(X,J,\w)$ be a complete $2n$-dimensional almost K\"{a}hler manifold. Suppose that there exists a bounded $1$-form $\theta$ such that
	\[ \w=d\theta.\]
	Then there exist constants $c_{3}(n),c_{4}(n)$ depending only on $n$ such that the following holds: 
	
	for any $\a\in\Om_{0}^{p,+}(X)$ when $n-p$ is odd (resp. $\a\in\Om_{0}^{p,-}(X)$ when $n-p$ is even), 
	\[\|\theta\|^{2}_{L^{\infty}(X)}\|(\bar{\partial}+\bar{\partial}^{\ast})\a\|_{L^{2}(X)}
	\geq \Big(c_{3}(n)-c_{4}(n)\|\theta\|^{2}_{L^{\infty}(X)}\sup|N_{J}|^{2}\Big)\|\a\|^{2}_{L^{2}(X)}. \]
\end{theorem}

\begin{proof}
	We give the proof for the case when $n-p$ is odd; the even case is entirely analogous. 
	
	First, observe that
	\[({\bar{\partial}+\bar{\partial}^{\ast}})^{2}=\De_{\bar{\pa}}+(\bar{\pa})^{2}+(\bar{\pa}^{\ast})^{2}. \]
	For any $\a\in\Om_{0}^{p,+}(X)$, we decompose  $\a:=\sum\a_{2i}$ with $\a_{2i}\in\Om^{p,2i}$. Then
	\begin{equation*}
	\begin{split}
	\|({\bar{\partial}+\bar{\partial}^{\ast}})\a\|_{L^{2}(X)}^{2}
	&=(\De_{\bar{\pa}}\a,\a)+\Big((\bar{\pa})^{2}\a,\a\Big)+\Big((\bar{\pa}^{\ast})^2\a,\a\Big)\\
	&=(\De_{\bar{\pa}}\a,\a)+2\mathrm{Re}\Big((\bar{\pa})^{2}\a,\a\Big)\\
	&=\sum(\De_{\bar{\pa}}\a_{2i},\a_{2i})+2\sum\mathrm{Re}\Big((\bar{\pa})^{2}\a_{2i},\a_{2i+2}\Big)\\
	&=\sum(\De_{\bar{\pa}}\a_{2i},\a_{2i})-2\sum\mathrm{Re}\Big([\pa,\bar{\mu}]\a_{2i},\a_{2i+2}\Big),
	\end{split}
	\end{equation*}
	where we used the identity (cf.(\ref{E27}))
	\[ \bar{\pa}^{2}=-[\pa,\bar{\mu}].\]
	
	For each term in the sum, we estimate:
	\begin{equation*}
	\begin{split}
	|([\pa,\bar{\mu}]\a_{2i},\a_{2i+2})|&=|(\bar{\mu}\a_{2i},\pa^{\ast}\a_{2i+2})+(\pa\a_{2i},\bar{\mu}^{\ast}\a_{2i+2})|\\
	&\leq \|\bar{\mu}\a_{2i}\|_{L^{2}}\|\pa^{\ast}\a_{2i+2}\|_{L^{2}}+ \|\pa\a_{2i}\|_{L^{2}}\|\bar{\mu}^{\ast}\a_{2i+2}\|_{L^{2}}.\\
	\end{split}
	\end{equation*}
	Summing over $i$ and applying the elementary inequality $2ab\leq c_{0}a^{2}+\frac{1}{c_{0}}b^{2}$
	for all $c_{0}>0$, we obtain
	\begin{equation*}
	\begin{split}
	|2\sum\mathrm{Re}([\pa,\bar{\mu}]\a_{2i},\a_{2i+2})|
	&\leq2\sum(\|\bar{\mu}\a_{2i}\|_{L^{2}}\|\pa^{\ast}\a_{2i+2}\|_{L^{2}}+ \|\pa\a_{2i}\|_{L^{2}}\|\bar{\mu}^{\ast}\a_{2i+2}\|_{L^{2}})\\
	&\leq \sum c_{0}(\|\pa\a_{2i}\|^2_{L^{2}}+\|\pa^{\ast}\a_{2i}\|^2_{L^{2}})
	+\sum\frac{1}{c_{0}}(\|\bar{\mu}\a_{2i}\|^2_{L^{2}}+\|\bar{\mu}^{\ast}\a_{2i}\|^2_{L^{2}})\\
	&=\sum c_{0}(\De_{\pa}\a_{2i},\a_{2i})+\sum\frac{1}{c_{0}}(\De_{\bar{\mu}}\a_{2i},\a_{2i}).\\
	\end{split}
	\end{equation*}	
	Combining this with the earlier expression for $\|({\bar{\partial}+\bar{\partial}^{\ast}})\a\|^{2}$ and using the identity $$\De_{\bar{\pa}}=\De_{\pa}+\De_{\bar{\mu}}-\De_{\mu},$$
	we find 
	\begin{equation*}
	\begin{split}
	\|({\bar{\partial}+\bar{\partial}^{\ast}})\a\|_{L^{2}(X)}^{2}&\geq\sum\big{(}(\De_{\bar{\pa}}\a_{2i},\a_{2i})- c_{0}(\De_{\pa}\a_{2i},\a_{2i})-\frac{1}{c_{0}}(\De_{\bar{\mu}}\a_{2i},\a_{2i})\big{)}\\
	&=\sum\big{(}(1-c_{0})(\De_{\bar{\pa}}\a_{2i},\a_{2i})- c_{0}(\De_{\mu}\a_{2i},\a_{2i})-(\frac{1}{c_{0}}-c_{0})(\De_{\bar{\mu}}\a_{2i},\a_{2i})\big{)}.\\
	\end{split}
	\end{equation*}
	
	Choosing $c_{0} = \frac{1}{2}$ and applying Theorem \ref{Thm6}, we obtain
	\begin{equation*}
	\begin{split}
	\|({\bar{\partial}+\bar{\partial}^{\ast}})\a\|_{L^{2}(X)}^{2}
	&\geq\sum\Big(\frac{1}{2}(\De_{\bar{\pa}}\a_{2i},\a_{2i})-\frac{1}{2}(\De_{\mu}\a_{2i},\a_{2i})-\frac{3}{2}(\De_{\bar{\mu}}\a_{2i},\a_{2i})\Big)\\
	&\geq \sum \|\theta\|^{-2}_{L^{\infty}(X)}\Big(c_{3}(n)-c_{4}(n)\|\theta\|^{2}_{L^{\infty}(X)}\sup|N_J|^{2}\Big)\|\a_{2i}\|^{2}_{L^{2}(X)}\\
	&= \|\theta\|^{-2}_{L^{\infty}(X)}\Big(c_{3}(n)-c_{4}(n)\|\theta\|^{2}_{L^{\infty}(X)}\sup|N_J|^{2}\Big)\|\a\|_{L^{2}(X)}^{2}.
	\end{split}
	\end{equation*}
	This completes the proof.
\end{proof}

Let $\mathcal{E}_{1}$ and $\mathcal{E}_{2}$ be $C^{\infty}$-vector bundles over a smooth manifold $X$, 
and let $\mathcal{D}:C^{\infty}(\mathcal{E}_{1})\rightarrow C^{\infty}(\mathcal{E}_{2})$ be a differential operator between $C^{\infty}$-sections of these bundle. 
Suppose furthermore that $X$ is a smooth Riemannian manifold and that $\Ga$ is a discrete group of isometrics of $X$ such that 
the differential operator $\mathcal{D}$ commutes with the action of $\Ga$. Let $(\mathcal{L},\na)$ be a $\Ga$-invariant Hermitian line bundle on $X$, 
and assume that $X/\Ga$ is closed. Under these hypotheses, we recall Atiyah's $L^{2}$-index theorem for the twisted operator $\mathcal{D}\otimes\na$ (cf. \cite{Gro}).

\begin{theorem}\cite[Theorem 2.3.A]{Gro}\label{Thm5}
	Let $\mathcal{D}$ be a first-order elliptic operator. Then there exists a closed nonhomogeneous form
	\[I_{D}=I^{0}+I^{1}+\cdots+I^{n}\in\Om^{\ast}(X)=\Om^{0}\oplus\Om^{1}\oplus\cdots\oplus\Om^{n} \]
	invariant under $\Ga$, such that the $L^{2}$-index of the twisted operator $\mathcal{D}\otimes\na$ is given by
	\[L^{2}\mathrm{Index}_{\Ga}(\mathcal{D}\otimes\na)=\int_{X/\Ga}I_{\mathcal{D}}\wedge\exp{[\w]}, \]
	where $[\w]$ is the Chern form of $\na$, and
	\[\exp{[\w]}=1+[\w]+\frac{[\w]\wedge[\w]}{2!}+\frac{[\w]\wedge[\w]\wedge[\w]}{3!}+\cdots. \]
\end{theorem}

\begin{remark}\label{Rem}
	
	(1)	An non-zero $L^{2}\mathrm{Index}_{\Ga}(\mathcal{D}\otimes\na)$ implies that 
	either $\mathcal{D}\otimes\na$ or its adjoint admits a non-trivial $L^{2}$-kernel.
	
	(2) In the present article, the operator $\mathcal{D}$ is taken to be $\bar{\pa}+\bar{\pa}^{\ast}$. 
	In this case, the $I^{0}$-component of $I_{\mathcal{D}}$ is nonzero. 
	Consequently, the integral $\int_{X/\Ga}I_{\mathcal{D}}\wedge\exp{t[\w]}\neq 0$, for almost all $t$, 
	provided the curvature form $\w$ is homologically nonsingular $\int_{X/\Ga}\w^{n}\neq 0$, for $n=\dim_{\C}X$ (cf. \cite[2.3.$\mathrm{A}'$. Remarks]{Gro}).
\end{remark}

Gromov defined the lower spectral bound $\la_{0}=\la_{0}(\mathcal{D})\geq 0$ as the upper bound of the negative numbers $\la$, such that 
\[\|\mathcal{D}s\|_{L^{2}}\geq\la\|s\|_{L^{2}} \]
for those sections $e$ of $\mathcal{E}$ where $\mathcal{D}s$ in $L^{2}$. 
Now let $\mathcal{D}$ be a $\Ga$-invariant, first-order elliptic operator on $X$, 
and let $I_{D}=I^{0}+I^{1}+\cdots+I^{n}\in\Om^{\ast}(X)$ be the corresponding index form. 
Let $\w$ be a closed $\Ga$-invariant $2$-form on $X$ and denote by $I_{t}^{n}$ the top component of product $I_{\mathcal{D}}\wedge\exp{t\w}$, 
for $t\in\mathbb{R}$. Hence $I_{t}^{n}$ is a $\Ga$-invariant $n$-form on $X$, $\dim X=n$ depending on parameter $t$.  The following theorem provides a criterion for the vanishing of the lower spectral bound.
\begin{theorem}(\cite[2.4.A. Theorem]{Gro})\label{Thm4}
	Assume that $H^{1}_{dR}(X)=0$, that $X/\Ga$ is closed, and that $\int_{X/\Ga}I_{t}^{n}\neq 0$, for some $t\in\mathbb{R}$. 
	If the form $\w$ is $d$(bounded), then either $\la_{0}(\mathcal{D})=0$ or $\la_{0}(\mathcal{D}^{\ast})=0$,
	where $\mathcal{D}^{\ast}$ is the adjoint operator.
\end{theorem}
We also recall the following classical result due to Gromov, for which Chen and Yang provided a detailed proof.
\begin{lemma}(\cite[Lemma 3.2]{CY})\label{L7}
	Let $(X,g)$ be a complete simply-connected manifold with negative sectional curvature bounded from above by a negative constant, i.e.
	$$\mathrm{sec}\leq -K$$
	for some $K>0$. Then for any bounded and closed $k$-form $\a$ on $X$, where $k\geq2$, there exists a bounded $(k-1)$-form $\b$ such that 
	$$\a=d\b$$
	and
	\[ \|\b\|_{L^{\infty}}\leq K^{-\frac{1}{2}}\|\a\|_{L^{\infty}}.\]
\end{lemma}
We now apply the above general theory to the specific setting of almost Kähler manifolds with negative sectional curvature. Corollary \ref{C5} applies Theorem \ref{Thm3} to the universal cover of a negatively curved almost Kähler manifold. 
\begin{corollary}\label{C5}
	Let $(X,J,\w)$ be a closed $2n$-dimensional almost K\"{a}hler manifold with  negative sectional curvature,
	i.e. there exists a constant $K>0$ such that \[ \mathrm{sec}\leq -K.\]
	Let $\pi:(\tilde{X},\tilde{J},\tilde{\w})\rightarrow(X,J,\w)$ be the universal covering map of $X$.  
	Then for any $\a\in\Om_{0}^{p,+}(\tilde{X})$ when $n-p$ is odd (resp. $\a\in\Om_{0}^{p,-}(\tilde{X})$ when $n-p$ is even),
	the following estimate holds:
	\[\|(\widetilde{\bar{\partial}+\bar{\partial}^{\ast}})\a\|_{L^{2}(\tilde{X})}\geq \Big(\frac{c_{3}(n)}{n}K-c_{4}(n)\sup|N_{J}|^{2}\Big)\|\a\|^{2}_{L^{2}(\tilde{X})}. \]
\end{corollary}

\begin{proof}
	Note that the pointwise norm of the Kähler form satisfies $|\tilde{\w}|^{2}=n$. By Lemma \ref{L7}, the negative curvature condition implies the existence of a $1$-form $\theta$ such that 
	\[ \tilde{\w}=d\theta\quad and\quad \|\theta\|_{L^{\infty}(\tilde{X})}\leq K^{-\frac{1}{2}}\sqrt{n}.\]
	Applying Theorem \ref{Thm3} on the universal cover $\tilde{X}$ with this $1$-form $\theta$, we obtain
	\begin{equation*}
	\begin{split}
	nK^{-1}\|(\widetilde{\bar{\partial}+\bar{\partial}^{\ast}})\a\|_{L^{2}(\tilde{X})}
	&\geq\|\theta\|^{2}_{L^{\infty}(\tilde{X})}\|(\widetilde{\bar{\partial}+\bar{\partial}^{\ast}})\a\|_{L^{2}(\tilde{X})}\\
	&\geq  \Big(c_{3}(n)-c_{4}(n)\|\theta\|^{2}_{L^{\infty}(\tilde{X})}\sup|N_{J}|^{2}\Big)\|\a\|^{2}_{L^{2}(\tilde{X})}\\
	&\geq \Big(c_{3}(n)-c_{4}(n)K^{-1}n\sup|N_{J}|^{2}\Big)\|\a\|^{2}_{L^{2}(\tilde{X})}.
	\end{split}
	\end{equation*}
	This completes the proof.
\end{proof}

We are now in a position to prove the main theorem of this section.
\begin{proof}[\textbf{Proof of Theorem \ref{Thm2}}]
	Assume that the Nijenhuis tensor satisfies the smallness condition
	\[ \sup|N_{J}|^{2}\leq\frac{c_{3}(n)K}{2nc_{4}(n)}:=C(n)K.\]
	Substituting this bound into the estimate from Corollary \ref{C5}, we find that when $n-p$ is odd, for any $\a\in\Om^{p,+}_{0}(\tilde{X})$,  
	\[\|(\widetilde{\bar{\partial}+\bar{\partial}^{\ast}})\a\|^{2}_{L^{2}(\tilde{X})}
	\geq \Big(\frac{c_{3}(n)K}{2n}\Big)\|\a\|^{2}_{L^{2}(\tilde{X})}. \]
	This strictly positive lower bound implies that the lower spectral bound 
	$\la_{0}$ of the operator $ \widetilde{\bar{\partial}+\bar{\partial}^{\ast}}:\Om_{(2)}^{p,+}(\tilde{X})\rightarrow\Om_{(2)}^{p,-}(\tilde{X})$ 
	is strictly positive. 
	
	On the other hand, since $H^{1}_{dR}(\tilde{X})=0$ for a simply-connected manifold, $X=\tilde{X}/\Ga$ is closed, 
	the negative sectional curvature implies that the Kähler form \(\tilde{\omega}\) is $d$(bounded), 
	and the integral $\int_{X}I^{n}_{t}\neq 0$ for some $t\in\mathbb{R}$, (see Remark \ref{Rem}), we can apply Theorem \ref{Thm4}. The positivity of $\la_{0}$ forces the kernel of the operator itself to be trivial, which, by the Theorem \ref{Thm4}, forces the kernel of the adjoint operator to be non-trivial; specifically,
	\[{\rm{coker}}(\widetilde{\bar{\partial}+\bar{\partial}^{\ast}})\cap\Om^{p,odd}_{(2)}(\tilde{X})\neq \{0\}. \]
	Applying Proposition \ref{Pro2}, we conclude that
	\begin{equation*}	
	\chi_{p}(X)=-\dim_{\Gamma}\mathrm{coker}(\tilde{\mathcal{D}}_{p})<0.
	\end{equation*}
	
	When $n-p$ is even, an analogous argument (using the spaces $\Om^{p,-}_{(2)}(\tilde{X})$) shows that $\chi_{p}(M)>0$. 
	Combining both cases, we obtain the uniform inequality
	\[ (-1)^{n-p}\chi_{p}(X)\geq1\]
	for all $p=0,1,\cdots,n$. The statement for the Euler number follows by evaluating the Hirzebruch genus at $y=-1$, where
	\[\chi_{y}(X)|_{y=-1}=(-1)^{n}\chi(X)=\sum_{p=0}^{n}(-1)^{p}\chi_{p}(X).\]
	Since each term $(-1)^{n-p}\chi_{p}(X)\geq1$, their sum is at least $n+1$, giving $(-1)^{n}\chi(X)\geq n+1$.
\end{proof}

	\section*{Acknowledgements}
	This work is supported by the National Natural Science Foundation of China Nos. 12271496, 11801539 (Huang), 11701226 (Tan), 12201001 (Zhang) and the Youth Innovation Promotion Association CAS, the Fundamental Research Funds of the Central Universities, the USTC Research Funds of the Double First-Class Initiative. The authors also thank DeepSeek for its assistance in proofreading and improving the grammar and expression of this manuscript.
	
	\bigskip
	\footnotesize
	
\end{document}